\documentclass[a4paper,10pt]{article}

\usepackage{amsthm}
\usepackage{amsmath}
\usepackage{amssymb}
\usepackage{dsfont}
\usepackage[pdftex]{color, graphicx}
\usepackage{subfigure}
\usepackage{enumerate}

\usepackage[pdfauthor={Myriam Finster},
	colorlinks=true,
	linkcolor=black,
	citecolor=black,
	urlcolor=black,
	filecolor=black,
	pdftitle={A series of coverings of the regular n-gon},
	pdftex]{hyperref}
\usepackage{url}

\def\id{\textrm{id}}

\def\mmod{\, \textrm{mod} \,}

\def\pred{\mathrm{pred}}
\def\suc{\mathrm{suc}}

\def\PSL{\mathrm{PSL}_2(\R)}
\def\GL{\mathrm{GL}_2(\R)}
\def\SL{\mathrm{SL}_2(\R)}
\def\SO{\mathrm{SO}_2(\R)}

\def\R{\mathds{R}}
\def\Z{\mathds{Z}}
\def\HH{\mathds{H}}

\def\bs{\backslash}

\newtheorem{lemma}{Lemma}[section]

\newtheorem{corollary}[lemma]{Corollary}

\newtheorem*{ThmOdd}{Theorem 1 a)}
\newtheorem*{ThmEven}{Theorem 1 b)}
\newtheorem*{ThmInf}{Theorem 2}
\newtheorem*{CorEinleitung}{Corollary}

\theoremstyle{definition}

\newtheorem*{claim}{Claim}

\newtheorem{remark}[lemma]{Remark}

\input xy
\xyoption{all}

\title{A series of coverings of the regular n-gon}
\author{Myriam Finster \\ \\ \normalsize Karlsruhe Institute of Technology (KIT) \\ \normalsize e-mail: myriam.finster@kit.edu}

\begin{document}

\maketitle

\begin{abstract}
 We define an infinite series of translation coverings of Veech's double-n-gon for odd $n \geq 5$ which share the same Veech group. Additionally we give an infinite series of translation coverings with constant Veech group of a regular n-gon for even $n \geq 8$. These families give rise to explicit examples of infinite translation surfaces with lattice Veech group.
\end{abstract}

\section{Introduction}
\label{sec:into}
Billiards in a polygon with angles commensurable with $\pi$, can be investigated by considering the geodesic flow on an appropriate translation surface (see \cite{ZK75}). 
The fundamental work of Veech in 1989 connects the properties of the geodesic flow to the so called \textit{Veech group} of the \textit{translation surface} (see \cite{Vee89}). 
The \textit{Veech alternative} states, that if the Veech group is a lattice, then the billiard flow in each direction is either periodic or uniquely ergodic.

Some translation surfaces which have the lattice property already considered by Veech himself, are the double-$n$-gons $X_n$ for odd $n \geq 5$. They arise from billiards in a triangle with angles $\pi/n$, $\pi/n$ and $(n-2)\pi/n$. For even $n$, $n \geq 8$, the regular double-$n$-gons considered by Veech, are degree $2$ coverings of the regular $n$-gons $X_n$. 
The Veech groups of all these surfaces are well known lattice groups.
\\ 

For every odd $n \geq 5$ and every $d \geq 2$ we define a \textit{translation covering}
$p_{n,d}: Y_{n,d} \to X_n$ of degree $d$ and calculate the Veech group of $Y_{n,d}$.

\begin{ThmOdd}
For odd $n\geq5$, the Veech group of $Y_{n,d}$ is
 $$\Gamma_n := \Gamma(Y_{n,d}) = \langle -I, T_n, R_n \, {T_n}^2 \, {R_n}^{-1}, \dots, {R_n}^{\frac{n-1}{2}} \, {T_n}^2 \, {R_n}^{-\frac{n-1}{2}} \rangle\;,$$
where
$$R_n = \begin{pmatrix} \cos{\frac{\pi}{n}} & -\sin{\frac{\pi}{n}} \\ \sin{\frac{\pi}{n}} & \cos{\frac{\pi}{n}} \end{pmatrix} \; , \quad T_n = \begin{pmatrix} 1 & \lambda_n \\ 0 & 1 \end{pmatrix} \; , \quad \lambda_n= 2 \cot{\frac{\pi}{n}}$$
and $I \in \GL$ is the identity matrix.
\end{ThmOdd}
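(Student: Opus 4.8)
The plan is to identify $\Gamma(Y_{n,d})$ as the subgroup of $\Gamma(X_n)$ consisting of those affine symmetries of the base that lift through the covering $p_{n,d}$, and then to show that this liftable subgroup is exactly $\Gamma_n$. First I would recall the Veech group of the base: for odd $n$ it is the well-known lattice (a Hecke triangle group) $\Gamma(X_n) = \langle -I, T_n, R_n \rangle$, where $R_n$ is the order-$2n$ rotation, $T_n$ is the horizontal parabolic arising as the product of Dehn twists in the horizontal cylinder decomposition, and $-I$ is the hyperelliptic involution. The conjugate $P_k := R_n^{\,k} T_n R_n^{-k}$ is then the parabolic associated with the cylinder decomposition in direction $k\pi/n$, and $R_n$ permutes these directions cyclically. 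Since every listed generator of $\Gamma_n$ lies in $\Gamma(X_n)$, the assertion is an equality of subgroups of $\Gamma(X_n)$.

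Next comes the descent step. I would show that every affine automorphism of $Y_{n,d}$ pushes down to one of $X_n$, so that $\Gamma(Y_{n,d}) \subseteq \Gamma(X_n)$; this rests on $X_n$ being primitive (it is not itself a nontrivial translation cover), a standard criterion guaranteeing that affine maps of a cover descend while preserving derivatives. Granting this, computing $\Gamma(Y_{n,d})$ reduces to deciding, element by element, which affine representatives in $\Gamma(X_n)$ lift to $Y_{n,d}$. Encoding $p_{n,d}$ by its monodromy, i.e.\ a representation of $\pi_1$ of the once-punctured $X_n$ into the relevant deck group, the lifting criterion becomes: $A \in \Gamma(X_n)$ lifts if and only if the automorphism it induces on $\pi_1$ fixes this monodromy datum up to relabelling of sheets.

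The core of the argument is to run this criterion on the parabolic generators, and I expect to verify four points. (i) $-I$ lifts, since the hyperelliptic involution preserves the covering data. (ii) The horizontal parabolic $T_n$ lifts, because a single composite Dehn twist along the horizontal cylinders acts trivially on the monodromy by the very way $p_{n,d}$ is constructed. (iii) In every rotated direction $k\pi/n$ with $1 \le k \le (n-1)/2$, the single twist $P_k$ fails to preserve the monodromy while its square $P_k^{\,2} = R_n^{\,k} T_n^{\,2} R_n^{-k}$ does: this is where the factor of $2$ is forced, and it is the crux of the proof. (iv) Consequently $R_n$ cannot lift, for otherwise conjugating the liftable $T_n$ by the liftable $R_n$ would make the non-liftable $P_1$ liftable as well.

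Finally I would assemble these facts. Points (i)--(iii) place all the listed generators in $\Gamma(Y_{n,d})$, giving $\Gamma_n \subseteq \Gamma(Y_{n,d})$. For the reverse inclusion I would argue that $\langle -I, T_n, P_1^{\,2}, \dots, P_{(n-1)/2}^{\,2} \rangle$ already has the correct finite index in $\Gamma(X_n)$ predicted by the lifting criterion -- for instance by realising it as the full preimage of the stabiliser of the monodromy datum under the natural map from $\Gamma(X_n)$ to the finite group acting on that datum, or via a Reidemeister--Schreier coset count -- so that no further element of $\Gamma(X_n)$ can lift. The hard part will be step (iii): one must control the cylinder decomposition in all $n$ directions simultaneously and track how the covering monodromy transforms under each rotated parabolic, proving in a manner uniform in $d$ that precisely the second power survives. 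Verifying that the listed generators exhaust the liftable subgroup, rather than cutting out a proper part of it, is the other delicate point and is where the index computation must be made airtight.
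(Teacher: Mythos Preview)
Your overall framework---descent via primitivity of $X_n$, then deciding which elements of $\Gamma(X_n)$ lift, then an index computation---matches the paper's, and the paper does carry out the Reidemeister--Schreier count you propose (showing $\Gamma_n$ has index $n$ in $\Gamma(X_n)$ with coset representatives $I,R_n,\dots,R_n^{n-1}$). But you have inverted where the difficulty lies, and your claim (ii) is a genuine gap. The outermost horizontal cylinder of $X_n$ contains \emph{both} slit edges $x_{k_1}$ and $x_{k_2}$; its core curve is $x_{k_1}x_{k_2}^{-1}$, whose monodromy $\sigma_{d,2}^{-1}\sigma_{d,1}$ is a $d$-cycle (or a product of two $d/2$-cycles). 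Thus the preimage of this cylinder in $Y_{n,d}$ has inverse modulus $d\lambda_n$ (or $\tfrac d2\lambda_n$), not $\lambda_n$, and the horizontal Dehn twist does \emph{not} act trivially on the monodromy. Proving $T_n\in\Gamma(Y_{n,d})$ is the paper's hardest step: it must exhibit an explicit sheet permutation $\sigma_T$ (different formulas for $d$ even and odd) satisfying two compatibility conditions along this long cylinder. This would equally be the crux of your monodromy approach, not a triviality.

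Conversely, your (iii) is the easy half in the paper's argument. In each rotated direction $v_l$, $1\le l\le n-1$, no cylinder of $X_n$ contains both slit edges, so preimage cylinders in $Y_{n,d}$ have inverse modulus $\lambda_n$ or $2\lambda_n$ (the latter because $\sigma_{d,1},\sigma_{d,2}$ are involutions), and $P_l^{\,2}$ lifts immediately. The paper never shows $P_l$ fails; instead it shows $R_n^l\notin\Gamma(Y_{n,d})$ directly by a geometric modulus comparison: the horizontal decomposition of $Y_{n,d}$ carries a cylinder of inverse modulus $d\lambda_n$ (or $\tfrac d2\lambda_n$) which no rotated direction can match, with separate case analyses for $d=2$ and $d=4$. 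This cylinder-modulus bookkeeping, rather than tracking $\pi_1$-automorphisms, is how the paper makes the argument uniform in $d$; your algebraic route is in principle viable but you would need to redo the hard step you thought was free.
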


If we likewise define translation coverings $p_{n,d}: Y_{n,d} \to X_n$ in the even case for $n \geq 8$ and $d \geq 2$, we get a similar statement.

\begin{ThmEven}
For even $n \geq 8$, the Veech group of $Y_{n,d}$ is
 $$\begin{array}{rl}
    \Gamma_n :=& \Gamma(Y_{n,d})\\
    = &\langle -I, T_n, {R_n}^2 \, {T_n}^2 \, {R_n}^{-2}, \dots, {R_n}^{n-2} \, {T_n}^2 \, {R_n}^{-(n-2)}, \, ({T_n}^{-1}{R_n}^2)^2, \\ \noalign{\smallskip}
     & {R_n}^2 \,({T_n}^{-1}{R_n}^2)^2 \, {R_n}^{-2}, \dots, {R_n}^{n-2} \, ({T_n}^{-1}{R_n}^2)^2 \, {R_n}^{-(n-2)} \rangle\;,
   \end{array} $$
where $R_n$, $T_n$, $\lambda_n$ and $I$ are defined as in Theorem 1 a).
\end{ThmEven}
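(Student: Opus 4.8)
The plan is to run the same scheme as in Theorem 1 a), realizing $\Gamma(Y_{n,d})$ as a stabilizer inside the (known) Veech group of the base surface and then identifying that stabilizer explicitly. Recall that for even $n$ the regular $n$-gon $X_n$ has Veech group the lattice $\langle -I, R_n^2, T_n\rangle$, which modulo $\pm I$ is a triangle group with one cone point of order $\tfrac n2$ (coming from the rotation $R_n^2$ by $2\pi/n$) and two cusps, the latter represented by the parabolics $T_n$ (trace $+2$) and $T_n^{-1}R_n^2$ (trace $-2$). Since $p_{n,d}$ is a translation covering, the covering formalism recalled above gives both that every affine diffeomorphism of $Y_{n,d}$ descends to $X_n$ with the same derivative, so $\Gamma(Y_{n,d}) \le \langle -I,R_n^2,T_n\rangle$, and the lifting criterion: $A=D(f)$ lies in $\Gamma(Y_{n,d})$ exactly when the induced automorphism $f_* \in \Stab$, i.e. when $f$ preserves the monodromy $\mon$ up to relabelling sheets. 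The theorem thus reduces to computing $\mathrm{Stab}_{\langle -I,R_n^2,T_n\rangle}(\mon)$.

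First I would verify the inclusion ``$\supseteq$'' by checking that each listed generator preserves $\mon$. The element $-I$ is the elliptic involution and manifestly lifts. The horizontal parabolic $T_n$ lifts as it stands because the monodromy of $p_{n,d}$ is adapted to the horizontal direction: the core curves of the horizontal cylinders carry trivial monodromy, so the product of horizontal Dehn twists acts trivially on $\mon$. Every other cusp direction differs by an order-two phenomenon: a single Dehn twist in such a cylinder alters $\mon$ by a class of order two, so the parabolic itself fails to lift while its square succeeds. This is precisely why the rotated horizontal parabolics enter only as $R_n^{2k}\,T_n^2\,R_n^{-2k}$ and the second cusp only through $(T_n^{-1}R_n^2)^2$ and its rotations $R_n^{2k}\,(T_n^{-1}R_n^2)^2\,R_n^{-2k}$ for $k=1,\dots,\tfrac n2-1$. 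Each verification is a routine transvection computation, tracking the action on a fixed set of generators of $\pi_1$ and checking that $\mon$ is fixed. Because the obstruction is valued in a fixed order-two group that is the same for every $d\ge 2$, this computation, and hence the answer, is independent of $d$.

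The substantial part is the reverse inclusion ``$\subseteq$'': showing that the listed elements already exhaust the stabilizer, with nothing further lifting. The plan is to pin down $[\langle -I,R_n^2,T_n\rangle:\Gamma(Y_{n,d})]$ in two independent ways and match them. On one side, orbit--stabilizer gives $[\langle -I,R_n^2,T_n\rangle:\Gamma(Y_{n,d})] = |\langle -I,R_n^2,T_n\rangle\cdot\mon|$, and I would compute this orbit combinatorially, confirming in particular that its size does not depend on $d$. On the other side, letting $G$ denote the group generated by the elements on the right-hand side, I would realize $\langle -I,R_n^2,T_n\rangle$ as a Fuchsian triangle group acting on $\HH$, build a fundamental domain for $G$ by coset enumeration / Reidemeister--Schreier, and read off its index together with its cusp and cone-point data. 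Agreement of the two indices forces $G = \mathrm{Stab}_{\langle -I,R_n^2,T_n\rangle}(\mon) = \Gamma(Y_{n,d})$. I expect the main obstacle to be exactly this exhaustiveness step, namely verifying that the two index counts agree and that the two parabolic families (the two cusps of $X_n$ unrolled under $\langle R_n^2\rangle$) together with $-I$ leave no further coset able to lift; by contrast the lifting of the individual generators in the previous paragraph is comparatively mechanical.
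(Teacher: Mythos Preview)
Your overall strategy—realizing $\Gamma(Y_{n,d})$ as the stabilizer of the monodromy inside $\Gamma(X_n)$ and then computing that stabilizer—is a legitimate alternative to the paper's approach, which argues geometrically via cylinder decompositions (Steps 1--4 as in the odd case). However, your argument for the inclusion ``$\supseteq$'' contains a concrete error that happens to sit exactly where the real work lies.

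You assert that $T_n$ lifts because ``the core curves of the horizontal cylinders carry trivial monodromy.'' This is false. In the horizontal decomposition of $X_n$ (even $n$), the innermost cylinder contains \emph{both} edges $x_{k_1}$ and $x_{k_2}$: with the paper's labelling, cylinder $i+1$ contains $x_i$ and $x_{(n/2-1)-i}$, and the choice of $k_1,k_2$ is made precisely so that these coincide. The core curve of that cylinder is $x_{k_1}x_{k_2}^{-1}$, with monodromy $\sigma_{d,2}^{-1}\circ\sigma_{d,1}$, which is a product of one or two long cycles (of length $d$ or $d/2$)—neither trivial nor of order two. So the horizontal direction is \emph{not} governed by the order-two obstruction you invoke for the other directions, and the single Dehn twist $T_n$ does not act trivially on $\mon$. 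It nonetheless lifts, but only after a nontrivial relabelling of sheets: one must exhibit a permutation $\sigma_T\in S_d$ with $\sigma_{d,1}\circ\sigma_T=\sigma_T\circ\sigma_{d,2}$ and $\sigma_T$ commuting with $\sigma_{d,2}^{-1}\sigma_{d,1}$. Producing this $\sigma_T$ is precisely the content of the paper's Step~3, and it is the least mechanical part of the argument. The same phenomenon recurs in one non-horizontal direction—the vertical one when $n\equiv 0\bmod 4$, and the direction of $R_n^{(n+2)/2}\cdot e_1$ when $n\equiv 2\bmod 4$—where again a single cylinder contains both $x_{k_1}$ and $x_{k_2}$; the paper treats this separately in Step~1 of the even case, and your sketch does not acknowledge it.

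For ``$\subseteq$'' your orbit--stabilizer plan is sound in principle, but you do not carry out the orbit computation, and it is not a priori obvious that the orbit size is independent of $d$ (this is exactly what the theorem asserts). The paper bypasses the orbit entirely: it first shows by a Reidemeister--Schreier argument that the listed group has index $n/2$ in $\Gamma(X_n)$ with coset representatives $I,R_n^2,\dots,R_n^{n-2}$, and then proves geometrically that each nontrivial $R_n^{2l}$ fails to lift, by comparing inverse moduli (and, in the borderline cases $d=4$ or $l=n/4$, heights) of cylinders in the horizontal direction against those in direction $R_n^{2l}\cdot e_1$. That geometric exclusion is what your proposal replaces with an unperformed orbit count.
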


So for all $n \geq 5$, $n \neq6$ we obtain an infinite family of translation coverings with fixed Veech group $\Gamma_n$.  
The \textit{Teichm\"uller curve} arising from the translation surface $Y_{n,d}$ has the following properties.

\begin{CorEinleitung}
 $\HH / \Gamma_n$ has genus $0$, $\frac{n+1}{2}$ cusps if $n$ is odd and $\frac{n+2}{2}$ cusps if $n$ is even.
\end{CorEinleitung}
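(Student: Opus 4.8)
The plan is to determine the signature of the Fuchsian group $\bar\Gamma_n$, the image of $\Gamma_n$ in $\PSL$, since the Teichm\"uller curve is the quotient $\HH/\bar\Gamma_n$ and Veech's theorem guarantees it is a finite-area orbifold. As $-I$ acts trivially on $\HH$, I would discard it and work with the images of the remaining generators. It then suffices to read off the genus, the cusps, and the orders of any elliptic points. First I would record the basic data: each $R_n^k T_n^2 R_n^{-k}$ is conjugate to the parabolic $T_n^2$ and hence parabolic, and $T_n$ itself is parabolic. Using the M\"obius action of the rotation $R_n$ one locates the fixed points on $\partial\HH$: the element $T_n$ fixes $\infty$, while $R_n^k T_n^2 R_n^{-k}$ fixes $R_n^k\cdot\infty = \cot(k\pi/n)$. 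I would also exploit the relation $R_n^n = -I$, so that $R_n$ has order $n$ in $\PSL$ and organizes these fixed points into $\langle R_n\rangle$-orbits on the boundary.

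Because every generator is parabolic, the natural tool is the Ford (isometric-circle) fundamental domain. Inside the vertical strip of width $\lambda_n$ that is fundamental for $z \mapsto z + \lambda_n = T_n$, I would take the region lying above all isometric circles of the parabolic generators $R_n^k T_n^2 R_n^{-k}$, whose radii $1/|c|$ and centres $-d/c$ are read off from the matrix entries, and which pair the resulting sides. Poincar\'e's polygon theorem then does double duty: it confirms that the listed elements generate $\bar\Gamma_n$, and, since every side-pairing is parabolic and every vertex of the domain is ideal, it shows there are no elliptic cycles. Thus the signature has no cone points, $\bar\Gamma_n$ is a free group, and the genus is governed purely by the planar combinatorics of the side-pairing pattern.

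The number of cusps $s$ is the number of $\bar\Gamma_n$-orbits of parabolic fixed points, equivalently the number of vertex cycles of the Ford domain. For odd $n$ I expect the $(n+1)/2$ parabolic generators to fix points lying in $(n+1)/2$ distinct orbits, while satisfying a single product relation --- concretely that an appropriate ordered product of $T_n$ and the $R_n^k T_n^2 R_n^{-k}$ equals $\pm I$, forced by $R_n^n = -I$. This one relation drops the free rank to $(n-1)/2$, and the identity $\mathrm{rank} = 2g + s - 1$ with $s = (n+1)/2$ then gives $g = 0$. As an independent check I would compute the hyperbolic area by Gauss--Bonnet: $\bar\Gamma_n$ has finite index in the Veech group of the base double-$n$-gon, a triangle group of known covolume, and $\mathrm{Area} = 2\pi(2g - 2 + s)$ pins down $g = 0$ once $s$ is known.

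The even case proceeds identically but with two families of parabolic generators: the conjugates $R_n^{2k} T_n^2 R_n^{-2k}$, whose fixed points form one $\langle R_n^2\rangle$-orbit through $\infty$, and the conjugates $R_n^{2k}(T_n^{-1}R_n^2)^2 R_n^{-2k}$, whose fixed points form a second $\langle R_n^2\rangle$-orbit; each orbit has $n/2$ points, giving $n$ parabolic generators in total. I would first verify that $(T_n^{-1}R_n^2)^2$ is genuinely parabolic and locate its fixed point, then determine how these two orbits merge under $\bar\Gamma_n$. The main obstacle, and the heart of the even case, is exactly this merging: one must show that the $n$ fixed points fall into precisely $(n+2)/2$ classes rather than $n$, which is what separates the cusp count from the naive generator count. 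Once the cusp count is established, genus $0$ again follows automatically from the absence of elliptic cycles together with the area bookkeeping. I expect verifying the exactness of the Ford domain --- that is, checking the hypotheses of Poincar\'e's theorem and tracking these cusp identifications --- to be far more delicate than the parabolicity checks, which are routine trace computations.
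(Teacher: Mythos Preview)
Your approach differs from the paper's and contains a genuine gap in the odd case.

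The paper does not build a Ford domain from scratch. It uses Lemma~4.1, already proven: $\Gamma_n$ has index $n$ in the triangle group $\Gamma(X_n)$ with explicit coset representatives $I,R_n,\dots,R_n^{n-1}$ (index $n/2$ with representatives $I,R_n^2,\dots,R_n^{n-2}$ for even $n$). A fundamental domain for $\Gamma_n$ is then simply the union of the $R_n^j$-translates of the standard triangle-group domain, and one reads off the vertex identifications and the Euler characteristic of the resulting CW structure directly. This is much shorter, and it bypasses the need to analyse isometric circles or to invoke Poincar\'e's theorem.

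The gap in your argument is the assertion that, because the listed generators are parabolic, every vertex of the Ford domain is ideal and there are no elliptic cycles. For odd $n$ this is false: $\bar\Gamma_n$ contains an order-$2$ elliptic element. Concretely, $R_n^{(n+1)/2}\,T_n^{-1}\,R_n^{(1-n)/2}$ has trace $0$ and lies in $\Gamma_n$ (one checks via the coset description in Lemma~4.1 that among the $n$ conjugates $R_n^{\,j}(T_n^{-1}R_n)R_n^{-j}$ of the order-$2$ generator of the Hecke group, exactly the one with $j=(n+1)/2$ lands in $\Gamma_n$). Equivalently, your own Gauss--Bonnet check exposes the problem: the index-$n$ cover of the $(2,n,\infty)$ triangle orbifold has area $\pi(n-2)$, whereas a genus-$0$ surface with $(n+1)/2$ cusps and \emph{no} cone points has area $\pi(n-3)$. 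The discrepancy of $\pi$ is precisely one cone point of order $2$. Consequently $\bar\Gamma_n$ is not free, the formula $\mathrm{rank}=2g+s-1$ does not apply as stated, and the single ``product relation'' you anticipate is not that a product of the parabolic generators equals $\pm I$, but that it equals this order-$2$ element. Your Ford polygon built only from the isometric circles of the parabolic generators will therefore either fail to be a fundamental domain or will have an interior vertex you have not accounted for.

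For even $n$ the base group $\Gamma(X_n)$ has signature $(n/2,\infty,\infty)$ and the area bookkeeping shows $\HH/\Gamma_n$ has no cone points, so your outline is consistent there; but the cusp-merging step you flag as ``the heart of the even case'' is again handled in the paper by simply looking at how the side-pairings $R_n^{2l}T_n^2R_n^{-2l}$ and $R_n^{2l}(T_n^{-1}R_n^2)^2R_n^{-2l}$ identify the boundary vertices $\cot(r\pi/n)$ of the tiled domain.
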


The limit of each family is an infinite translation surface, $Y_{n,\infty}$.

\begin{ThmInf}
 For $n \geq 5$, $n \neq 6$, the Veech group $\Gamma(Y_{n, \infty})$ of $Y_{n,\infty}$ is $\Gamma_n$. In particular $Y_{n,\infty}$ is an infinite translation surface with a lattice Veech group.
\end{ThmInf}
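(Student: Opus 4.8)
The plan is to establish the two inclusions $\Gamma_n\subseteq\Gamma(Y_{n,\infty})$ and $\Gamma(Y_{n,\infty})\subseteq\Gamma_n$ separately. I will use the covering description of the family: $Y_{n,\infty}$ is the normal covering of $X_n$ with deck group $\Z$ attached to a surjection $h\colon\pi_1(X_n\setminus\Sigma)\to\Z$ (with $\Sigma$ the cone points), while each finite cover $Y_{n,d}$ is the quotient $Y_{n,\infty}/d\Z$, i.e. the $\Z/d\Z$-covering attached to the reduction $h\bmod d$. For a matrix $A\in\GL$, lying in the Veech group of any one of these surfaces means that $A$ is the derivative of an affine self-homeomorphism; the point is to transport such maps and their derivatives between the total cover $Y_{n,\infty}$ and its finite quotients $Y_{n,d}$.

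For $\Gamma_n\subseteq\Gamma(Y_{n,\infty})$, fix a generator $A$ of $\Gamma_n$ and an affine representative $\phi_A$ of $A$ on $X_n$. By Theorem 1, $\phi_A$ lifts to $Y_{n,d}$ for every $d$, which says precisely that the induced map on cohomology sends the class $h\bmod d$ to a unit multiple of itself, for all $d$. Since $h$ represents a \emph{primitive} integral class, this family of congruences upgrades to an honest integral identity $(\phi_A)^{*}h=\pm h$ (compare the two classes coordinatewise: the pairwise determinants vanish modulo every $d$ and hence vanish, so the image is an integer multiple of a primitive vector, and that multiple must be a unit modulo all $d$, i.e. $\pm1$). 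Therefore $\phi_A$ preserves $\ker h$ and lifts to the $\Z$-cover $Y_{n,\infty}$ with the same derivative $A$, so $A\in\Gamma(Y_{n,\infty})$; running over the finite generating set gives the inclusion.

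The reverse inclusion $\Gamma(Y_{n,\infty})\subseteq\Gamma_n$ is the substantive half. Let $\Phi$ be an affine automorphism of $Y_{n,\infty}$ with derivative $A$. I would first prove that the group of pure translations of $Y_{n,\infty}$ coincides with the deck group $\Z$: this is the main obstacle, because on an infinite surface one must exclude unexpected translations, and I expect to argue it from the inherited cylinder and separatrix structure, showing that any translation must permute the $\Z$ many polygon copies compatibly with the holonomy and hence be a power of the generating deck translation. Granting this, $\Phi$ conjugates translations to translations (conjugation fixes the derivative $I$), so it normalizes the deck group and descends to an affine automorphism $\bar\Phi$ of $X_n=Y_{n,\infty}/\Z$ with the same derivative $A$; in particular $A\in\Gamma(X_n)$ and $\Phi$ is a lift of $\bar\Phi$. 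Existence of a lift to the $\Z$-cover gives $\bar\Phi^{*}h=\pm h$ over $\Z$, which reduces modulo every $d$ to show that $\bar\Phi$ lifts to each $Y_{n,d}$; hence $A\in\Gamma(Y_{n,d})=\Gamma_n$ by Theorem 1.

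Combining the inclusions yields $\Gamma(Y_{n,\infty})=\Gamma_n$. As $\Gamma_n$ is the Veech group of the compact surfaces $Y_{n,d}$ and $\HH/\Gamma_n$ has finite covolume by the Corollary, $\Gamma_n$ is a lattice, so $Y_{n,\infty}$ is an infinite translation surface with lattice Veech group, as claimed.
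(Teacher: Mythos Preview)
Your proposal rests on a structural assumption that is false: the covering $p_{n,\infty}\colon Y_{n,\infty}\to X_n$ is \emph{not} a $\Z$-covering. The monodromy sends the generators $x_{k_1}$ and $x_{k_2}$ to the involutions $\sigma_{k_1},\sigma_{k_2}\in\textrm{Sym}(\Z)$, and the group they generate is the infinite dihedral group $D_\infty$, acting regularly on $\Z$; no homomorphism $h\colon\pi_1(X_n^\ast)\to\Z$ can produce this cover, since $\Z$ has no elements of order two. (The paper states this explicitly in a remark: $Y_{n,\infty}$ is a $\Z$-cover of $Y_{n,2}$, not of $X_n$.) Your description $Y_{n,d}=Y_{n,\infty}/d\Z$ is likewise incorrect: for odd $d$ the cover $Y_{n,d}\to X_n$ is not even normal, so it is not the quotient of $Y_{n,\infty}$ by any subgroup of the deck group. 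All of the cohomological lifting arguments you sketch---preserving a primitive integral class $h$ up to sign, reducing modulo $d$, upgrading congruences to an integral identity---therefore have no object to apply to.

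Even setting that aside, you acknowledge that the key step in the reverse inclusion, identifying the translation group of $Y_{n,\infty}$ with the deck group, is an obstacle you only ``expect to argue''; that is not a proof. The paper bypasses this issue entirely: it obtains $\Gamma(Y_{n,\infty})\subseteq\Gamma(X_n)$ from the equality $SC(Y_{n,\infty})=SC(X_n)$ of saddle-connection holonomy sets together with $\Gamma(X_n)=\textrm{Stab}(SC(X_n))$ (maximality of the Hecke group for odd $n$, Singerman's classification for even $n$). It then reruns the geometric steps of the finite case directly on $Y_{n,\infty}$: explicit multitwists for the parabolic generators, an explicit permutation $\sigma_T$ realising $T_n$, and the exclusion of the rotations $R_n^l$ by observing that the horizontal direction contains two infinite strips while the directions $v_l$ do not (or have strips of different height). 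If you want to salvage a covering-theoretic argument, you must work with the actual $D_\infty$ deck group, or with the genuine $\Z$-cover $Y_{n,\infty}\to Y_{n,2}$ and $\Gamma(Y_{n,2})=\Gamma_n$, and you still need an independent substitute for the unproven translation-group claim.
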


So the families give rise to explicit examples of infinite translation surfaces with lattice Veech groups, which are rare by now. One example of an infinite translation surface with lattice Veech group was found in \cite{HoWe09}. It is a $\Z$-cover of a double cover of the regular octagon. A second example is the translation surface described in \cite{Ho08}. It is build from two infinite polygons, the convex hull of the points $(n, n^2)$ and the convex hull of the points $(n, -n^2)$.
Another example is the infinite staircase origami $Z_{2,0}^\infty$, calculated in \cite{SchHu09}.\\

We will treat the case where $n$ is odd in detail. The proof for even $n$ works very similar, hence we keep it short.

\subsection*{Acknowledgements}

I would like to thank Frank Herrlich for his continuing support, Gabriela Schmit\-h\"usen for many helpful discussions and inspiring suggestions and Piotr Przytycki for useful comments. 
This work was partially supported by the Landesstiftung Baden-W\"urttemberg within the project ``With origamis to Teichm\"uller curves in moduli space''.

\section{Basic definitions and preliminaries.}
In this section we want to shortly review the basic definitions used to state and prove our theorems. For a more detailed introduction to Veech groups see e.g.\! \cite{HS01} or \cite{Vor96}. More details on Teichm\"uller curves can be found e.g.\! in \cite{HSch07}.\\

A \textit{(finite) translation surface} $X$ is a compact Riemann surface with a finite, non\-empty set $\Sigma(X)$ of \textit{singular points} together with a maximal $2$-dimensional atlas $\omega$ on $X \setminus \Sigma(X)$, such that all transition functions between the charts are translations. The $2$-dimensional atlas $\omega$ induces a flat metric on $X \setminus \Sigma(X)$, whereas 
the angles around points in $\Sigma(X)$ are integral multiples of $2 \pi$. If the angle around a singular point is $2 \pi$, then the flat metric can be extended to that point, otherwise the flat metric has a conical singularity.
A translation surface is obtained by gluing finitely many polygons via identification of edge pairs using translations. In this construction, singularities may arise from the vertices of the polygons. Especially in this situation the translation structure on $X$ is somehow obvious, so we often omit $\omega$ in the notation.\\

A continuous map $p:Y \to X$ is called a \textit{translation covering}, if $p(\Sigma(Y)) = \Sigma(X)$ and $p:Y \setminus \Sigma(Y) \to X \setminus \Sigma(X)$ is locally a translation. Since both $X$ and $Y$ are compact, $p$ is a finite covering map in the topological sense, ramified at most over the singularities $\Sigma(X)$. In the context of translation coverings $p: Y \to X$, we call $X$ the \textit{base surface} and $Y$ the \textit{covering surface} of $p$. A translation surface $(X,\omega)$ that, in this sense, is not the covering surface of a base surface with smaller genus, is called \textit{primitive}. In the following, only base surfaces of genus greater than one are considered and all singularities have conical angle greater than $2 \pi$, so $p^{-1}(\Sigma(X)) = \Sigma(Y)$. This kind of translation coverings are often called \textit{balanced} translation coverings.
It is a result of M\"oller in \cite{Moe06}, that every translation surface is the (balanced) covering surface of a primitive base surface and that the primitive surface is unique, if its genus is greater than one.\\

The \textit{affine group} of a translation surface $(X, \omega)$ is the set of affine, orientation preserving diffeomorphisms on $X$, i.e.\ maps that can locally be written as $z \mapsto A z + b$ with $A \in \SL$ and $b \in \R^2$. The translation vector $b$ depends on the local coordinates, whereas the derivative $A$ is globally defined. The derivatives of the affine maps on $X$ (i.e.\ of the elements in the affine group), form the \textit{Veech group} $\Gamma(X)$ of $X$. An important connection between the Veech groups of a primitive base surface and the covering surface in a translation covering is stated in the following lemma.

\begin{lemma}
\label{Lemma:prim}
 Let $p:(Y, \nu) \to (X, \omega)$ be a (finite, balanced) translation covering with primitive base surface $(X, \omega)$ and genus $g(X) > 1$, then
 $\Gamma(Y)$ is a subgroup of $\Gamma(X)$.
\end{lemma}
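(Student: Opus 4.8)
The plan is to reduce the statement to a single assertion: every $A \in \Gamma(Y)$ already lies in $\Gamma(X)$. Since $\Gamma(Y)$ is itself a group, the inclusion $\Gamma(Y) \subseteq \Gamma(X)$ then automatically makes it a subgroup, which is all that is claimed. The main tool is the natural action of $\SL$ on translation structures: for $A \in \SL$ and a translation surface $(X,\omega)$, let $A \cdot (X,\omega)$ denote the translation surface obtained by post-composing every chart of $\omega$ with $A$. First I would record the standard reformulation of the Veech group: a matrix $A \in \SL$ lies in $\Gamma(X)$ if and only if $A \cdot (X,\omega)$ and $(X,\omega)$ are translation isomorphic. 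This is immediate from the definition of the affine group (the isomorphism is realized by the affine diffeomorphism whose derivative is $A^{\pm 1}$) together with the fact that $\Gamma(X)$ is a group. The same equivalence holds verbatim for $(Y,\nu)$.

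Now fix $A \in \Gamma(Y)$. By the reformulation we have a translation isomorphism $g\colon (Y,\nu) \to A \cdot (Y,\nu)$. Next I would apply $A$ to the entire covering. A short chart computation shows that if $p$ is locally a translation with respect to $(\nu,\omega)$, then post-composing both structures with $A$ keeps $p$ locally a translation (a local translation by $c$ becomes a local translation by $Ac$); moreover $p$ sends singularities to singularities independently of the flat structure. Hence $p\colon A \cdot (Y,\nu) \to A \cdot (X,\omega)$ is again a finite, balanced translation covering. The linear action preserves the genus and cone angles, so $A \cdot (X,\omega)$ still has genus $>1$ and all its singularities have angle greater than $2\pi$; it is also still primitive, since any nontrivial intermediate covering of $A\cdot(X,\omega)$ would, after applying $A^{-1}$, produce one for $(X,\omega)$. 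Composing with $g$ yields a balanced translation covering $p \circ g\colon (Y,\nu) \to A \cdot (X,\omega)$ onto a primitive base of genus $>1$.

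At this stage $(Y,\nu)$ carries two balanced translation coverings onto primitive bases of genus greater than one: the given map $p$ onto $(X,\omega)$, and the constructed map $p \circ g$ onto $A \cdot (X,\omega)$. This is exactly the situation governed by the uniqueness part of M\"oller's theorem \cite{Moe06}: because $g(X) > 1$, the primitive base of $(Y,\nu)$ is unique up to translation isomorphism, so $A \cdot (X,\omega) \cong (X,\omega)$. By the reformulation from the first paragraph this means precisely $A \in \Gamma(X)$, completing the argument.

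The step I expect to be the main obstacle is the correct invocation of M\"oller's uniqueness statement. One has to be careful that applying $A$ produces not just a topological covering but a genuinely \emph{balanced} translation covering onto a \emph{primitive} base of genus $>1$, so that the hypotheses of \cite{Moe06} are met; and equally that the uniqueness furnishes a \emph{translation} isomorphism rather than a mere homeomorphism, since only a translation isomorphism $A\cdot(X,\omega)\cong(X,\omega)$ converts back into an element of $\Gamma(X)$. The accompanying verifications that the $\SL$-action preserves the properties ``balanced'', ``primitive'', the genus, the cone angles, and the local-translation property of $p$ are routine chart computations, and the remainder is bookkeeping with the action.
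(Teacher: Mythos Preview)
Your argument is correct and is exactly the approach the paper takes: the paper's proof consists of the single sentence that the lemma ``is an immediate consequence of the result of M\"oller cited above,'' and you have simply spelled out how Möller's uniqueness theorem is applied. In particular, your construction of two balanced coverings of $(Y,\nu)$ onto primitive bases $(X,\omega)$ and $A\cdot(X,\omega)$ and the subsequent appeal to uniqueness is precisely the intended deduction.
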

The Lemma is an immediate consequence of the result of M\"oller cited above (see \cite{McM06}).
In Chapter \ref{sec:inf}, the statement of Lemma~\ref{Lemma:prim} will be proved separately as Lemma~\ref{Lemma:inf} for finite and infinite coverings of the base surfaces $X_n$, which are introduced in Chapter \ref{sec:baseSurface}.\\

We will often use the \textit{cylinder decomposition} of a translation surface $X$. A \textit{cylinder} is a maximal connected set of homotopic simple closed geodesics. The \textit{inverse modulus} of a cylinder is the ratio of its length to its height. Whenever the genus of $X$ is greater than one, every cylinder is bounded by geodesic intervals joining singular points. Such a geodesic is called \textit{saddle connection} if no singular point lies in the interior. The \textit{Veech alternative} states, that if the Veech group of a translation surface is a lattice, then the geodesic flow in each direction is either periodic or uniquely ergodic. 
This important result of Veech implies in particular, that if $X$ has a lattice Veech group and if there exists a closed geodesic in direction $\theta$ on $X$ then the surface decomposes into cylinders in direction $\theta$.\\

Being a compact Riemann surface, $X$ defines a point in the moduli space $M_{g,s}$ of Riemann surfaces of genus $g$ with $s = |\Sigma(X)|$ punctures.
Every matrix $A \in \SL$ can be used to change the translation structure $\omega$ on $X$, by composing each chart with the affine map $z \mapsto A \cdot z$. We call the new translation surface $X_A$. The identity map $\id_A :X \to X_A$ is an orientation preserving homeomorphism, so $(X_A, \id_A)$ is a point in the Teichm\"uller space $T_{g,s}$. The surfaces $X_A$ and $X_B$ define the same point in $T_{g,s}$ iff $A \cdot B^{-1} \in \SO$ thus defining a map $i : \SL/\SO \cong \HH \hookrightarrow T_{g,s}$. 
The Veech group $\Gamma(X)$ lies in the stabiliser in $\SL$ of $X_I$ as Riemann surface.
Hence we obtain a map from $\HH / \Gamma(X)$ to $M_{g,s}$. 
By the Theorem of Smillie the image of this map, which is birational to $\HH / \Gamma(X)$, is an algebraic curve in the moduli space iff $\Gamma(X)$ is a lattice in $\SL$. In this case, this curve is called a \textit{Teichm\"uller curve}.

\section{The base surfaces.}
\label{sec:baseSurface}

\subsection{The odd regular double-n-gon.}
\label{sec:nGon}
The translation surface described in this section, the odd regular double-n-gon $X_n$, was already considered by Veech himself in \cite{Vee89}. Other references concerning this translation surface are \cite{HS01} Chapter~1.7 and \cite{Vor96} Chapter~4. The surface arises from a triangle with angles $\pi/n$, $\pi/n$ and $(n-2)\pi/n$ by the unfolding construction described in \cite{ZK75}.

In the following, let $n$ be an odd number, greater or equal to $5$. The translation surface $X_n$ can be glued of two regular n-gons $P_n$ and $Q_n$. Rotate the $n$-gons until each of them has a horizontal side and $P_n$ lies above its horizontal side, whereas $Q_n$ lies below it. Gluing parallel sides leads to the desired translation surface. To fix lengths, let the circumscribed circle of $P_n$ (respectively of $Q_n$) have radius $1$. Then the edges of the $n$-gons have lengths $2 \cdot \sin(\frac{\pi}{n})$.
\begin{figure}[htb]
 \centering
 \includegraphics{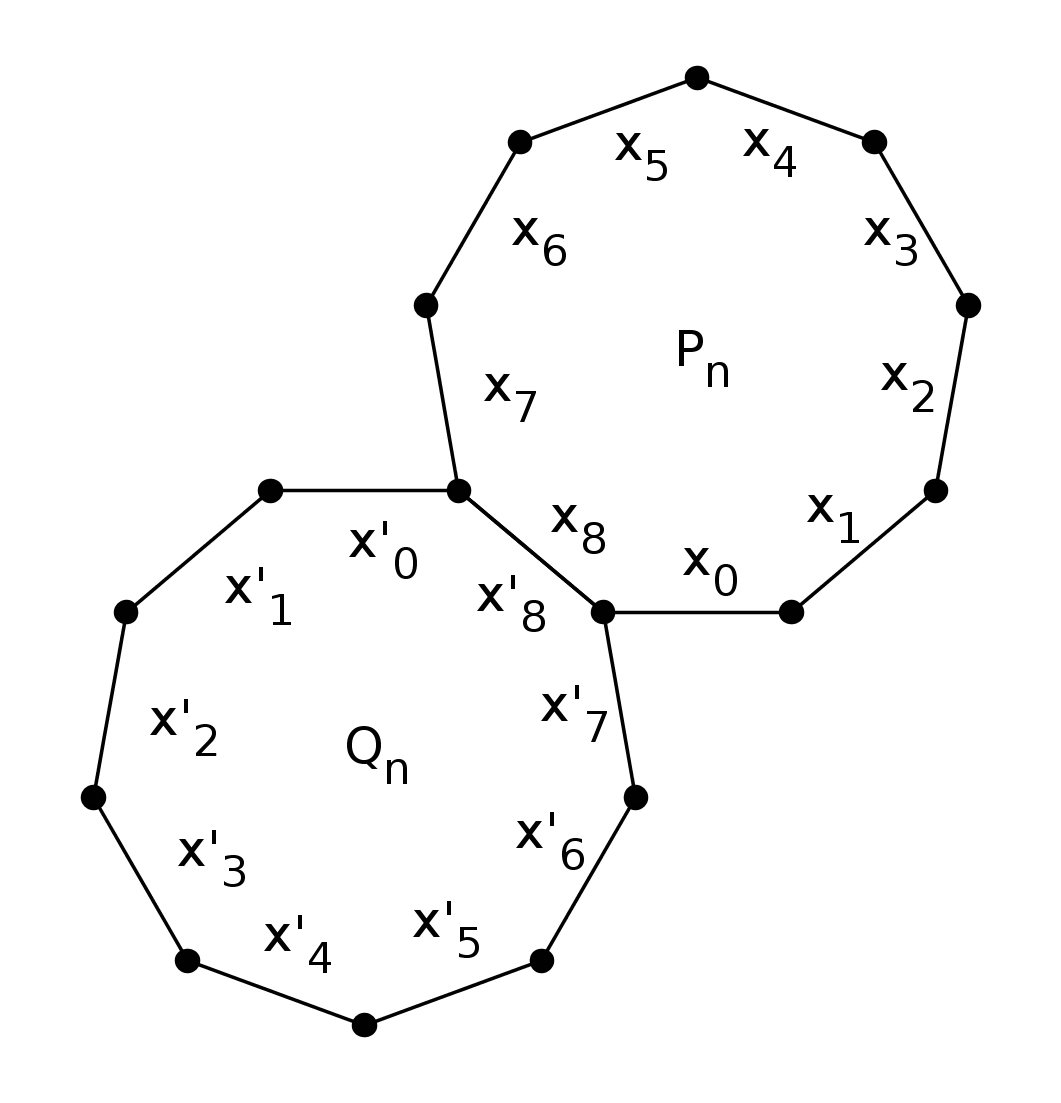}
 \caption{Labelling of $X_9$}
 \label{fig:vertexNr}
\end{figure}
We label the edges of the two polygons in the following way:
Label the edges of $P_n$ and $Q_n$ counter-clockwise with $x_0, x_1, \dots, x_{n-1}$ and $x'_0, x'_1, \dots,$ $x'_{n-1}$, starting with the horizontal side (see Figure \ref{fig:vertexNr}).
Whenever needed, we initially glue the edges $x_{n-1}$ and $x'_{n-1}$ to obtain only one polygon that defines $X_n$.

\begin{remark}
\label{rem:genus}
 Following the identification of vertices while gluing the edges $x_i$ and $x_i'$ one can easily see that all the vertices are identified. $X_n$ thereby has exactly one singularity with conical angle $2 n \, \frac{n-2}{n} \pi = (n-2) \, 2 \pi$. The Euler characteristic is 
  $\chi = 1 - n + 2 = 3-n$
 and the genus of $X_n$ is $g(X_n) = \frac{n-1}{2}$. 
\end{remark}

According to \cite{Vee89} Theorem 5.8, the Veech group  of $X_n$ is generated by the matrices 
$$R_n := \begin{pmatrix} \cos{\frac{\pi}{n}} & -\sin{\frac{\pi}{n}} \\ \sin{\frac{\pi}{n}} & \cos{\frac{\pi}{n}} \end{pmatrix} 
\quad \textrm{and} \quad 
T_n := \begin{pmatrix} 1 & \lambda_n \\ 0 & 1 \end{pmatrix} 
\quad \textrm{where} \quad
\lambda_n = 2 \cot{\frac{\pi}{n}} \,.$$

Let $\varphi_R$ denote the orientation preserving affine diffeomorphism of $X_n$ with derivative $R_n$. The map  $\varphi_R$ rotates $X_n$ around the centre $m_P$ of $P_n$ and further translates $X_n$ along the vector from $m_P$ to the centre $m_Q$ of $Q_n$. 

To understand the affine map on $X_n$ with derivative $T_n$, we use a cylinder decomposition of $X_n$. 
The horizontal saddle connections in $X_n$ decompose the translation surface into $\frac{n-1}{2}$ cylinders.
Figure \ref{fig:CylinderSizes} indicates how to compute the heights and lengths of these cylinders: First we rotate the double-$n$-gon counterclockwise by $90$ degrees. If we choose the origin of a coordinate system in the centre of $P_n$, then the vertices of $P_n$ lie in $( \, \cos (j \pi \, \frac{2}{ n}), \, \sin (j \pi \, \frac{2}{n}) \, )$, $j = 0, \dots, n-1$. This is all the information we need.

\begin{figure}[htb]
 \centering
 \includegraphics{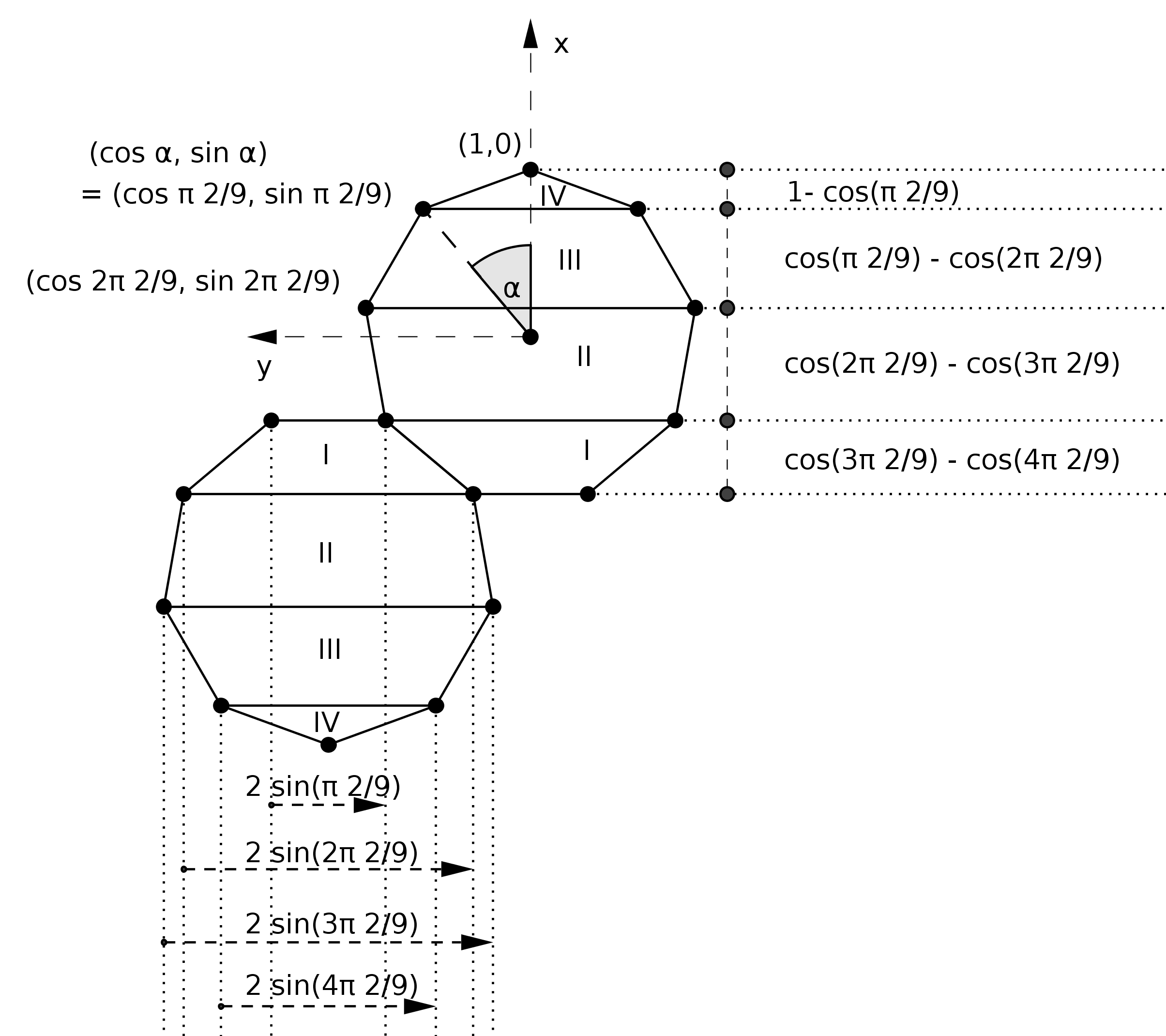}
 \caption{Horizontal cylinder decomposition of $X_9$}
 \label{fig:CylinderSizes}
\end{figure}

We name the cylinder containing the gluing along the edges $x_i$ and $x_{n-i}$ with $i$ (see Figure~\ref{fig:CylinderSizes}). If $h_i$ denotes the height and $l_i$ the length of the $i$th cylinder and $j= \frac{n+1}{2} - i$, then

\begin{equation}
\label{eq:height}
\begin{array}{rcl}
  h_i 	&=& \cos{\frac{2 \pi}{n} (j-1)} - \cos{ \frac{2 \pi}{n} j} \\ \noalign{\medskip}
	&=& -2 \sin{\frac{\pi (2 j -1)}{n}} \sin{\frac{-\pi}{n}} = 2 \sin{\frac{\pi (2 j -1)}{n}} \sin{\frac{\pi}{n}}
\end{array}
\end{equation}
 and
\begin{equation}
\label{eq:width}
\begin{array}{rcl}
 l_i 	&=& 2 (\sin{\frac{2 \pi}{n} j} + \sin{ \frac{2 \pi}{n} (j-1)}) \\\noalign{\medskip}
	&=& 4 \sin{\frac{\pi (2 j -1)}{n}} \cos{\frac{\pi}{n}}
\end{array}
\end{equation}
for $i \in \{1, \dots, \frac{n-1}{2} \}$.
Note that we used the following consequence of the addition and subtraction theorems for $\sin$ and $\cos$:
$$\begin{array}{rcl}
   \cos x - \cos y &=& - 2 \sin(\frac{x+y}{2}) \sin(\frac{x-y}{2})\\
   \sin x + \sin y &=& 2 \sin(\frac{x+y}{2}) \cos(\frac{x-y}{2})
  \end{array}
$$ 

The inverse modulus in all horizontal cylinders of $X_n$ is $\lambda_n = 2 \cot{\frac{\pi}{n}}$. Thus we can construct an orientation preserving affine diffeomorphism $\varphi_T$ of $X_n$ with derivative $T_n$ (see e.g. \cite{Vor96} Chapter 3.2).
The map $\varphi_T$ fixes all horizontal saddle connections pointwise and twists every cylinder once.

\begin{remark}
 The Veech group $\Gamma(X_n)$ is the Hecke triangle group with signature $(2,n,\infty)$, thus it is a lattice.
\end{remark}

It is well known, that the translation surface $X_n$ is primitive. We give a short proof nevertheless.
\begin{lemma}
\label{Lemma:primitiveOdd}
 The translation surface $X_n$ with odd $n\geq5$ is primitive.
\end{lemma}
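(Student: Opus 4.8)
The plan is to assume that $X_n$ is the covering surface of a nontrivial balanced translation covering $p : X_n \to X$ of degree $d \geq 2$ onto a surface $X$ of smaller genus (using the standing conventions $g(X) > 1$ and all cone angles $> 2\pi$) and to derive $d = 1$, a contradiction. The whole argument is driven by the fact, recorded in Remark~\ref{rem:genus}, that $X_n$ has a single singularity $\sigma$, of cone angle $2\pi(n-2)$, and genus $\frac{n-1}{2}$.

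First I would pin down the shape of the covering. Since $\Sigma(X_n) = \{\sigma\}$ and $p^{-1}(\Sigma(X)) = \Sigma(X_n)$, the base surface $X$ has a single singularity $\tau$ with $p^{-1}(\tau) = \{\sigma\}$; that is, $p$ is totally ramified over $\tau$, and the degree $d$ equals the local ramification index at $\sigma$. Comparing cone angles at $\sigma$ and at $\tau$ (the cone angle multiplies by the ramification index) gives $2\pi(n-2) = d \cdot 2\pi\,(2g(X)-1)$, so in particular $d \mid (n-2)$. Note also that genus-$1$ bases cannot occur, since a flat torus carries no cone point of angle $> 2\pi$.

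The second and decisive ingredient is a count of shortest saddle connections. Because $\Sigma(X_n) = p^{-1}(\Sigma(X))$ and $p$ is a local isometry off the singularities, the projection of any saddle connection of $X_n$ is a saddle connection of $X$ of the same length, and conversely every saddle connection $\delta$ of $X$ lifts. A lift cannot project onto $\delta$ more than once, since a multiple traversal would force $\sigma = p^{-1}(\tau)$ into its interior; hence $\delta$ has exactly $d$ lifts, all of the same length. Consequently the sets of holonomy vectors of saddle connections of $X_n$ and of $X$ coincide, the two surfaces share the same minimal saddle-connection length, and the numbers of minimal saddle connections satisfy $N(X_n) = d \cdot N(X)$. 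On $X_n$ the shortest saddle connections are precisely the $n$ glued polygon edges $x_0, \dots, x_{n-1}$, which lie in $n$ distinct directions, so $N(X_n) = n$ and therefore $d \mid n$.

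Combining the two divisibility relations yields $d \mid \gcd(n, n-2)$, which equals $1$ because $n$ is odd; hence $d = 1$ and $p$ is trivial, proving that $X_n$ is primitive. The main thing to get right is the lifting count in the third paragraph — specifically the claim that each minimal saddle connection of $X$ has exactly $d$ lifts, together with the prior claim that the minimal saddle connections of $X_n$ are exactly its $n$ edges, so that no unexpected short saddle connection spoils the count. Once these geometric points are secured, the elementary arithmetic fact $\gcd(n, n-2) = 1$ for odd $n$ closes the argument.
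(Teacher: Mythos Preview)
Your treatment of the $g(X)\ge 2$ case is correct but takes a longer route than the paper. Once $p^{-1}(\tau)=\{\sigma\}$ is known, the paper does not bother with the cone-angle divisibility $d\mid(n-2)$ nor with counting all $n$ minimal edges: it simply notes that the shortest \emph{horizontal} saddle connection on $X_n$ is unique (the glued edge $x_0\sim x'_0$). Since, by your own lifting argument, every saddle connection on the base has $d$ lifts to $X_n$ with the same holonomy vector, that single horizontal minimum already forces $d=1$. Your detour through $d\mid n$, $d\mid(n-2)$ and $\gcd(n,n-2)=1$ is valid, but the uniqueness-in-one-direction observation dispatches the case in one line.

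The real gap is the genus-$1$ case. The ``standing conventions $g(X)>1$ and all cone angles $>2\pi$'' you invoke are imposed in the preliminaries for the particular base surfaces $X_n$ used afterwards, not as part of the definition of primitivity; the paper's own proof of this lemma treats the torus case explicitly and separately. A once-marked flat torus (marked point of angle $2\pi$) is an admissible target of a translation covering in the paper's sense, so your sentence ``a flat torus carries no cone point of angle $>2\pi$'' does not exclude it. Over such a base the preimage $p^{-1}(\tau)$ may well contain regular points of $X_n$, so neither your identification $p^{-1}(\tau)=\{\sigma\}$ nor the ensuing divisibilities $d\mid(n-2)$ and $d\mid n$ are available any longer. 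The paper closes this case with a different tool: by Gutkin--Judge, a translation surface covers a once-punctured torus iff its Veech group is arithmetic, and the Hecke triangle group $\Gamma(X_n)$ is not.
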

\begin{proof}
 Suppose $p: X_n \to Y$ is a translation covering of degree $d > 1$. The surface $X_n$ has one singularity, say $x$, so $p$ is ramified at most over $p(x)$. We have to distinguish two cases. 
 
 If $y = p(x)$ is a removable singularity in the flat structure, than $Y$ is w.l.o.g. the once punctured torus.
  According to \cite{GJ00} a translation surface is the covering surface of the once punctured torus if and only if its Veech group is arithmetic, i.e.\ commensurable to $\textrm{SL}_2(\Z)$. But the Veech group of $\Gamma(X_n)$ is not arithmetic.
 
 In the second case, $y$ is a non removable singularity in the flat structure. Then $p^{-1} (y) = \{x\}$.
 It follows that the preimage of a saddle connection on Y is the union of $d$ saddle connections on $X_n$.
 The map $p$ is locally a translation, so theses $d$ saddle connections all have the same length and direction. Since there is only one shortest saddle connection on $X_n$ in horizontal direction, this leads to a contradiction to $d>1$.
 
 We conclude that every translation covering $p:X_n \to Y$ has degree $1$, i.e. the genus of $Y$ equals the genus of $X$.
 \end{proof}

In particular $X_n$ has no translations, with the consequence that there is exactly one affine map with derivative $A$ for each $A$ in the Veech group $\Gamma(X_n)$.\\

According to Remark \ref{rem:genus}, the translation surface $X_n$ has one singularity and genus $\frac{n-1}{2}$. Consequently the fundamental group $\pi_1(X_n \setminus \Sigma)$ is free of rank $n-1$.
We use the centre of the side $x_{n-1}$ as base point. A basis of the fundamental group of the double-$n$-gon can be chosen in such a way, that the $i$th generator ($i \in \{0, \dots, n-2\}$) crosses the edge $x_i$ once from $P_n$ to $Q_n$ (and respectively the edge $x_{n-1}$ once from $Q_n$ to $P_n$). The $i$th generator of the fundamental group will be called $x_i$, like the label of the edge it crosses (see Figure \ref{fig:fundamentalGroup}). 
Now an arbitrary element of the fundamental group $\pi_1(X_n \setminus \Sigma)$ can be factorised in this basis by recording the names of the crossed edges and the directions of the crossings. The edge $x_{n-1}$ corresponds to the identity element.
\begin{figure}[htb]
 \centering
 \includegraphics{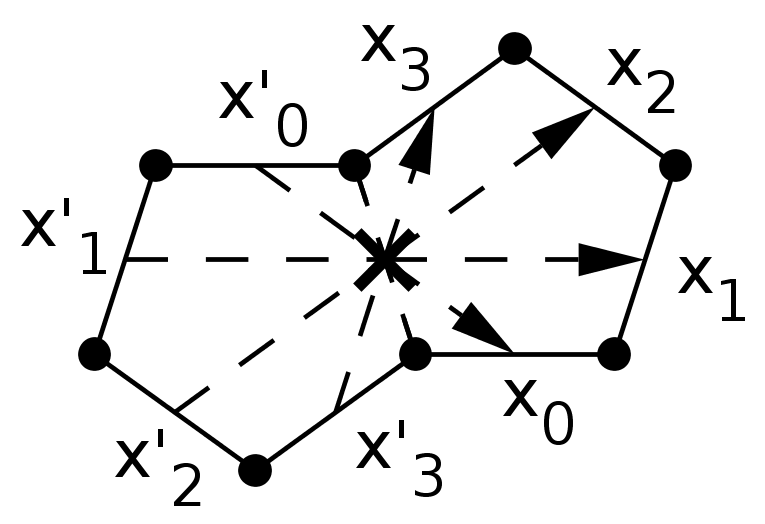}
 \caption{Fundamental group of the double-$5$-gon}
 \label{fig:fundamentalGroup}
\end{figure}

\subsection{The even regular n-gon.}

Now let $n$ be an even number and $n \geq 8$. The translation surface $X_n$ arises from a regular $n$-gon $P$, with opposite sides glued together. As indicated in the introduction, the regular double $n$-gon, considered by Veech in \cite{Vee89}, is a degree-$2$-covering of this surface. It is well known, that the regular $n$-gon is a primitive translation surface.
To fix the size and orientation of the polygon, let the $j$th vertex of $P$ lie in $(\cos(j \frac{2 \pi}{n}), \sin(j \frac{2 \pi}{n}))$, $j \in \{0, \dots, n-1\}$. We label the edges $P$ counter-clockwise with $x_0, \dots, x_{n/2-1}, x'_0, \dots, x'_{n/2-1}$, starting with the edge between vertex $0$ and vertex $1$ (see Figure \ref{fig:Xn_even_labels}).
\begin{figure}[htbp]
\centering
\subfigure[$X_8$]{
  \centering
  \includegraphics{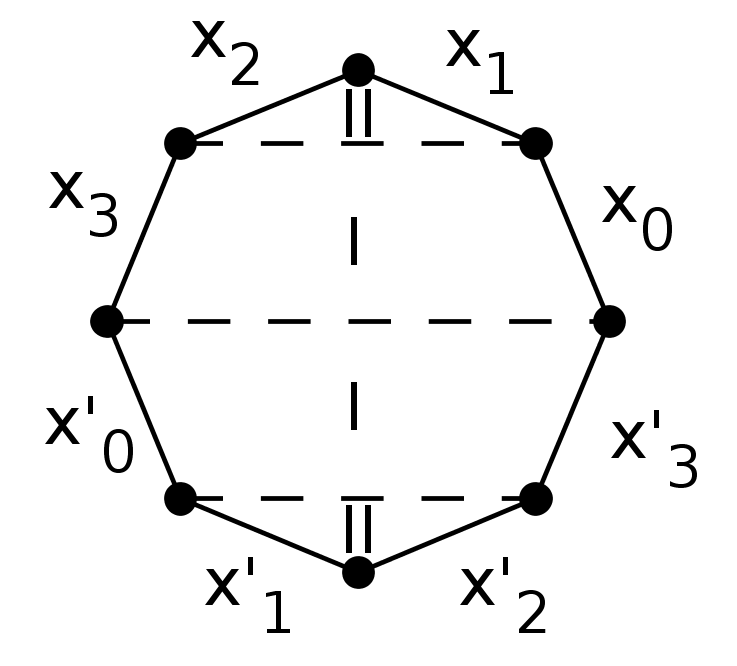}
  \label{X8}
}
\subfigure[$X_{10}$]{
  \centering
  \includegraphics{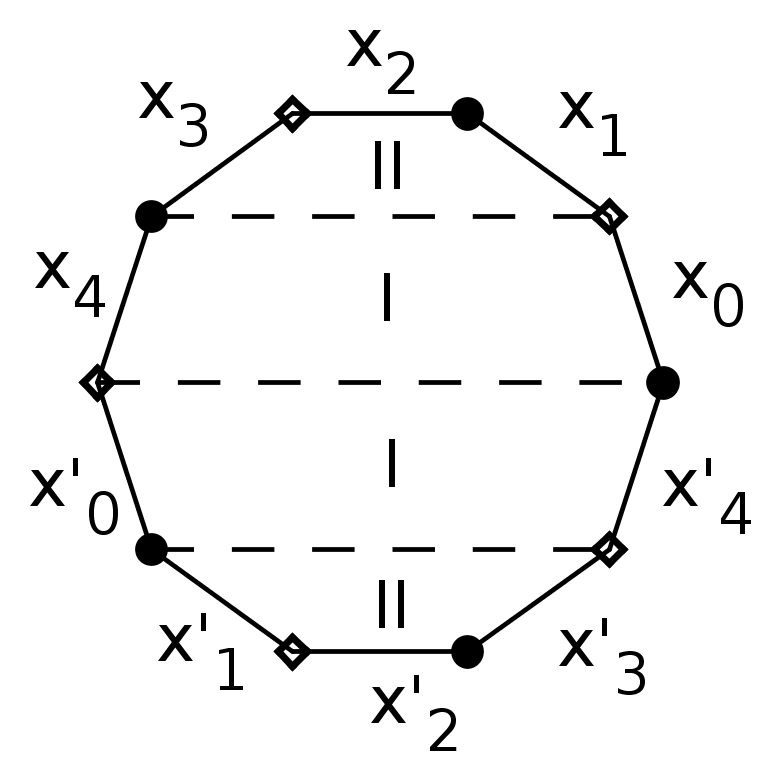}
  \label{X10}
}
 \caption{Labelling of $X_n$ for even $n$}
 \label{fig:Xn_even_labels}
\end{figure}

\begin{remark}
\label{rem:genus_even}
 The surface $X_n$ has one singularity, if $n/2$ is even
 and two singularities if $n/2$ is odd.
 Using the Euler characteristic it follows, that $X_n$ has genus 
 $$g(X_n) = \left\{ \begin{array}{ll}
                n/4 & \textrm{, if } n \equiv 0 \mmod 4\\
		(n-2)/4 & \textrm{, if } n \equiv 2 \mmod 4
               \end{array}
\right. \, .$$
\end{remark}

Lemma J in \cite{HS01} states, that the Veech group of $X_n$ equals the Veech group of Veech's double-$n$-gons for even $n \geq 8$. So according to \cite{Vee89} Theorem~5.8, $\Gamma(X_n) = \langle  T_n, {R_n}^2, R_n T_n {R_n}^{-1} \rangle$ where 
$$R_n := \begin{pmatrix} \cos{\frac{\pi}{n}} & -\sin{\frac{\pi}{n}} \\ \sin{\frac{\pi}{n}} & \cos{\frac{\pi}{n}} \end{pmatrix} 
\quad, \quad 
T_n := \begin{pmatrix} 1 & \lambda_n \\ 0 & 1 \end{pmatrix} 
\quad \textrm{and} \quad
\lambda_n = 2 \cot{\frac{\pi}{n}} \,$$
 are defined as in the odd case. Since $R_n T_n {R_n}^{-1} = {R_n}^{n+2} \, {T_n}^{-1}$ (see relations on page~\pageref{Lemma:VeechGroupGen}), only the first two generators are needed.\\

Again we need the cylinder decomposition of $X_n$ in horizontal direction to understand the affine map on $X_n$ with derivative $T_n$.
 The surface $X_n$ decomposes into $n/4$ or $(n-2)/4$ cylinders, depending on $n \equiv 0 \mmod 4$ or $n \equiv 2 \mmod 4$. If we label the cylinder containing the gluing along the edges $i$ and $(n/2-1) - i$ with $i+1$ (see Figure \ref{fig:Xn_even_labels}), we get 
\begin{equation}
\label{eq:widthAndHeightEven}
 h_i = 2 \cos{\frac{(2 i -1) \pi }{n}} \sin{\frac{\pi}{n}}
 \quad \textrm{and} \quad
  l_i = 4 \cos{\frac{ (2 i - 1) \pi}{n}} \cos{\frac{\pi}{n}}
\end{equation}
for $i \in \{1, \dots, \frac{n}{4}\}$  or $i \in \{ 1, \dots, \frac{n-2}{4}\}$, respectively.
It follows that the inverse modulus is $2 \cot \frac{\pi}{n} = \lambda_n$ in all horizontal cylinders.

\begin{remark}
 For even $n$, the rotation $R_n$ is not contained in the Veech group $\Gamma(X_n)$. In this case $\Gamma(X_n)$ is a triangle group with signature $(\frac{n}{2}, \infty, \infty)$ and an index $2$ subgroup of the Hecke triangle group $\langle T_n, R_n \rangle$ with signature $(2, n, \infty)$.
\end{remark}

The translation surface $X_n$ has one singularity and genus $n/4$ if  $n \equiv 0 \mmod 4$ or two singularities and genus $(n-2)/4$ if  $n \equiv 2 \mmod 4$. In both cases, the fundamental group $\pi_1(X_n \setminus \Sigma)$ is free of rank $n/2$. 
We use the centre as base point and choose closed paths that cross exactly one edge $x_i$ one time as basis of the fundamental group (see Figure \ref{fig:fundamentalGroupEven}). Now an arbitrary element of the fundamental group $\pi_1(X_n \setminus \Sigma)$ can be factorised in this basis by recording the names of the crossed edges and the directions of the crossings.
\begin{figure}[htb]
 \centering
 \includegraphics{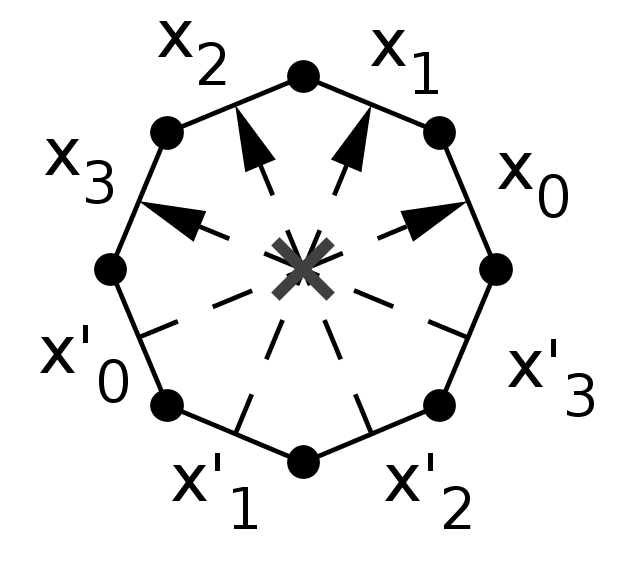}
 \caption{Fundamental group of the regular $8$-gon}
 \label{fig:fundamentalGroupEven}
\end{figure}

\section{A series of n-gon coverings.}

Using a generalised algorithm of the one presented in \cite{Sch04}, it is possible to calculate the Veech group of an arbitrary finite covering of a double-$n$-gon or $n$-gon (see \cite{Fr08}). With the help of such computations, a conjecture about an interesting family of translation coverings
$$p_{n,d}: Y_{n,d} \to X_n$$ 
arose. 
In the following we define the family $p_{n,d}$ and prove that 
the Veech group of the covering surface $Y_{n,d}$ is
$$\Gamma(Y_{n,d}) = 
\langle -I, T_n, R_n \, {T_n}^2 \, {R_n}^{-1}, \dots, {R_n}^{\frac{n-1}{2}} \ {T_n}^2 \, {R_n}^{-\frac{n-1}{2}} \rangle$$
for odd $n \geq 5$ and
 $$\begin{array}{rl} 
 \Gamma(Y_{n,d}) = &\langle -I, T_n, {R_n}^2 \, {T_n}^2 \, {R_n}^{-2}, \dots, {R_n}^{n-2} \, {T_n}^2 \, {R_n}^{-(n-2)}, \, ({T_n}^{-1}{R_n}^2)^2, \\ \noalign{\smallskip}
     & {R_n}^2 \,({T_n}^{-1}{R_n}^2)^2 \, {R_n}^{-2}, \dots, {R_n}^{n-2} \, ({T_n}^{-1}{R_n}^2)^2 \, {R_n}^{-(n-2)} \rangle
   \end{array} $$
for even $n \geq 8$
with $R_n$ and $T_n$ as in Chapter \ref{sec:nGon}.
In particular, it is independent of the covering degree $d$.
Our proof uses geometric arguments and not the methods, used to develop the algorithm mentioned above.

The series of coverings is somehow similar to the stair-origamis in \cite{Sch06} and to the $Z_{2,0}^k$ series in \cite{SchHu09}. It likewise uses two slits in the base surface to construct the covering surface and has a Veech group that is independent of the covering degree.\\

We will define the coverings by their monodromy. We recall the definition of the monodromy; for more details 
see \cite{Mir95} Chapter III.4. 
Let $p_{n,d}: Y_{n,d} \to X_n$ be a covering of degree $d$ and $\Sigma = \Sigma(X_n)$.
Choose a base point $x$ in $X_n \setminus \Sigma$ and call its preimages in $Y_{n,d}$ $0, \dots, d-1$. Every closed path in $X_n \setminus \Sigma$ at $x$ can be lifted to a path in $Y_{n,d}$ with starting point in $\{0, \dots, d-1\}$. The end point of the lifted path is again contained in $\{0, \dots, d-1\}$ and the lifts define a permutation of the points $\{0, \dots, d-1\}$ in $Y_{n,d}$. 
The corresponding map $$m: \pi_1(X_n \setminus \Sigma, x) \rightarrow S_d$$
has the following property
$$m([w_1] \cdot [w_2]) = m([w_2]) \circ m([w_1]) \quad \forall \, [w_1], [w_2] \in \pi_1(X_n \setminus \Sigma,x)$$ and is called  the \textit{monodromy} of the covering. Its image in $S_d$ is a transitive permutation group.
On the other hand, every such map $m: \pi_1(X_n \setminus \Sigma, x) \rightarrow S_d$ with transitive image defines a degree $d$ covering of $X_n$.\\

To define the map $m$ it is sufficient to define the images of the generators $x_0, \dots, x_{n-2}$ (or $x_0, \dots, x_{\frac{n}{2}-1}$ respectively) of $\pi_1(X_n \setminus \Sigma, x)$ in $S_d$. The generator $x_i$ crosses the edge $x_i$ and no other edge, so that the permutation $\sigma_i = m(x_i)$ indicates directly how the $d$ copies of $X_n$ are glued along the edges $x_i$ and $x_i'$ to obtain the covering surface $Y_{n,d}$. The edge $x_i$ in copy $j$ is glued to the edge $x_i'$ in copy $\sigma_i(j)$.

\subsection{Definition of the coverings.}
\label{sec:coveringDefs}

In the following, we define for each $d \geq 2$ a covering $p: Y_{n,d} \to X_n$ by giving its monodromy $m_{n,d}$.
For the definition, we need the two permutations
$$\sigma_{d,1} =
\left\{ \begin{array}{ll}
 (0 \; 1) \, (2 \; 3) \, \cdots \, (d-2 \;\; d-1) &, d \textrm{ even}\\
 (0 \; 1) \, (2 \; 3) \, \cdots \, (d-3 \;\; d-2) &, d \textrm{ odd}
\end{array}\right.$$
and 
$$\sigma_{d,2} =
\left\{ \begin{array}{ll}
 (1 \; 2) \, (3 \; 4) \, \cdots \, (d-3 \;\; d-2) \, (d-1 \;\; 0) &, d \textrm{ even}\\
 (1 \; 2) \, (3 \; 4) \, \cdots \, (d-2 \;\; d-1) &, d \textrm{ odd}
\end{array}\right. \, .$$
Further, let
$$k_1 = \left\{\begin{array}{ll}
  \frac{n-1}{2} & \textrm{, } n \textrm{ odd}\\
  \frac{n}{4} -1 & \textrm{, } n \equiv 0 \mmod 4\\
  \frac{n-2}{4} -1 & \textrm{, } n \equiv 2 \mmod 4
  \end{array}\right. 
  \quad \textrm{ and } \quad 
  k_2 = \left\{\begin{array}{ll}
  \frac{n+1}{2} & \textrm{, } n \textrm{ odd}\\
  \frac{n}{4} & \textrm{, } n \equiv 0 \mmod 4\\
  \frac{n-2}{4} + 1 & \textrm{, } n \equiv 2 \mmod 4
  \end{array}\right. \, .$$
For $d \geq 2$ define the monodromy $m_{n,d}$ by:
$$ m_{n,d}: \left\{ \begin{array}{lcll}
               	\pi_1(X_n \setminus \Sigma) & \longrightarrow & S_d \\
		x_i & \mapsto & \id &,i \notin \{k_1, k_2\}\\
		x_{k_1} & \mapsto & \sigma_{d,1} \\
		x_{k_2} & \mapsto & \sigma_{d,2}
              \end{array}
\right. $$
Let $Y_{n,d}$ denote the covering surface of the translation covering $p:Y_{n,d} \rightarrow X_n$, defined by the monodromy $m_{n,d}: F_{n-1} \rightarrow S_d$ or $m_{n,d}: F_{\frac{n}{2}} \rightarrow S_d$, respectively.
Figure \ref{fig:CoveringExampels} shows the degree $2$, $3$ and $4$ coverings of the double-$5$-gon, as well as the degree $2$ and $3$ covering of the $8$-gon.
The non labelled edges are glued to the parallel edge in the same $X_n$ copy, labelled edges are glued to their labelled correspondent.

\begin{figure}[htbp]
\centering
\subfigure[$X_5$ degree $2$]{
\centering
 \includegraphics{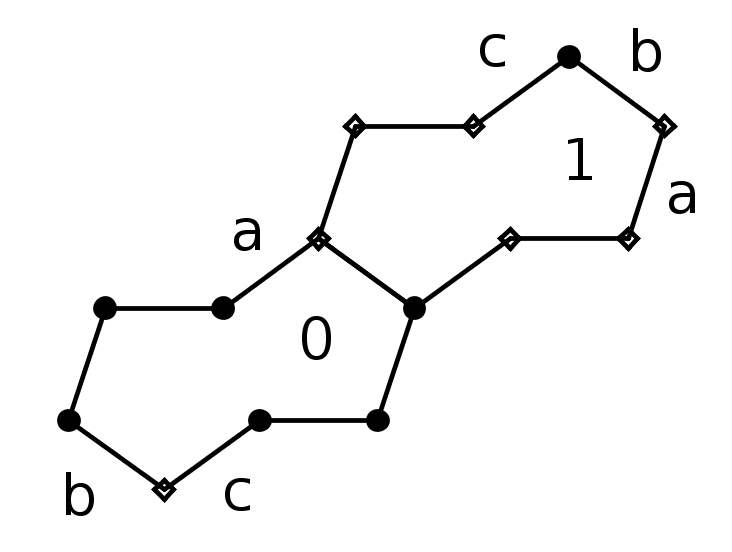}
}
\subfigure[$X_8$ degree $2$]{
  \centering
 \includegraphics{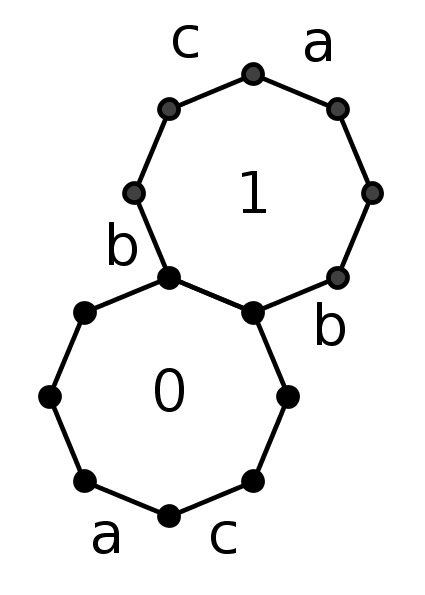}
}
\\
\subfigure[$X_5$ degree $3$]{
\centering
 \includegraphics{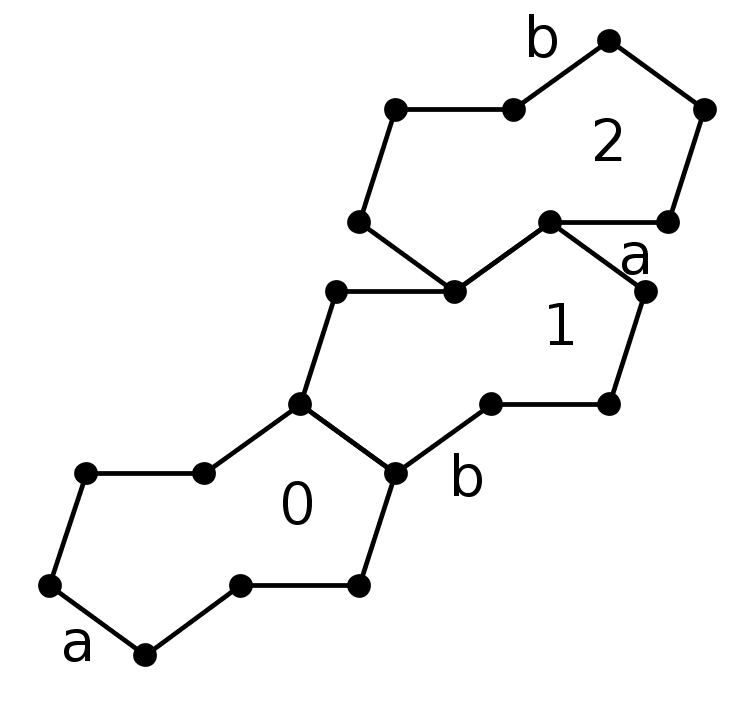}
}
 \subfigure[$X_8$ degree $3$]{
  \centering
  \includegraphics{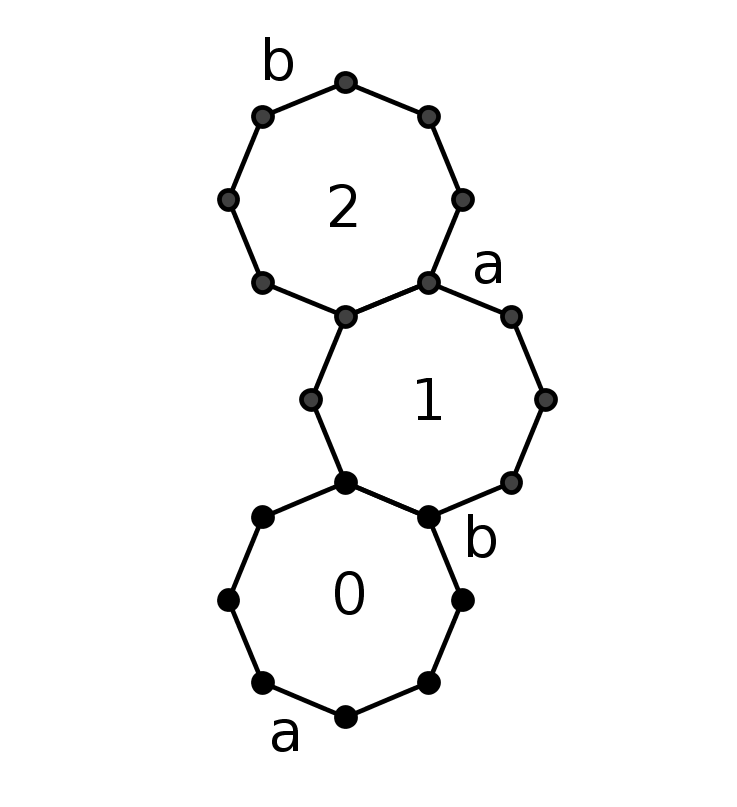}
}
\subfigure[$X_5$ degree $4$]{
\centering
 \includegraphics{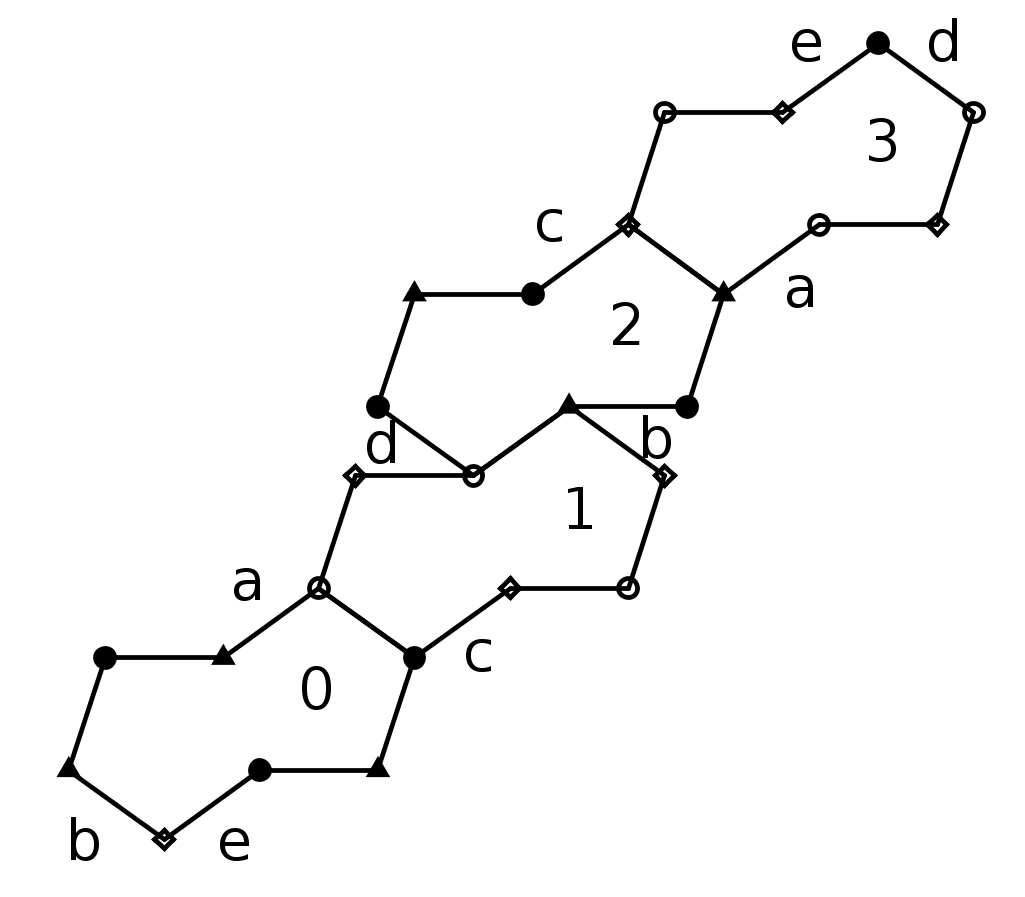}
}
 \caption{Translation coverings $Y_{n,d}$ of $X_n$}
 \label{fig:CoveringExampels}
\end{figure}
Figure \ref{fig:OpMdeven} and Figure \ref{fig:OpMdodd} show the action of the generators of $\pi_1(X_n \setminus \Sigma)$ via $m_{n,d}$ on
the set $\{0, \dots, d-1\}$. 
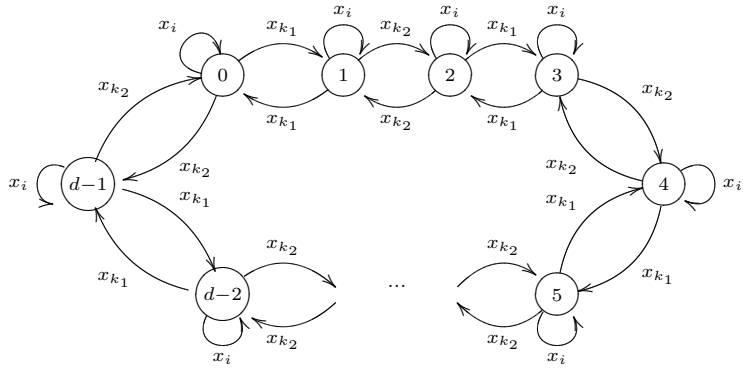
\begin{figure}[htbp]
\centering
\def\objectstyle{\scriptstyle}
\def\labeltstyle{\scriptstyle}
\xymatrix{
 &&
 &
 *++[o][F-]{0} \ar@(l,u)[]^{x_i} 	\ar@/^1pc/[r]^{x_{k_1}} 	\ar@/^1pc/[dl]^{x_{k_2}} 	&
 *++[o][F-]{1} \ar@(ul, ur)[]^{x_i} 	\ar@/^1pc/[l]^{x_{k_1}} 	\ar@/^1pc/[r]^{x_{k_2}}		&
 *++[o][F-]{2} \ar@(ul, ur)[]^{x_i} 	\ar@/^1pc/[r]^{x_{k_1}} 	\ar@/^1pc/[l]^{x_{k_2}} 	&
 *++[o][F-]{3} \ar@(ul, ur)[]^{x_i} 	\ar@/^1pc/[l]^{x_{k_1}} 	\ar@/^1pc/[rd]^{x_{k_2}} 	&
\\
 &&
 *++[o][F-]{d-1}\ar@(lu, ld)[]_{x_i} 	\ar@/^1pc/[dr]^{x_{k_1}}	\ar@/^1pc/[ur]^{x_{k_2}}  & & & & & 
 *++[o][F-]{4}	\ar@(ru, rd)[]^{x_i}	\ar@/^1pc/[dl]^{x_{k_1}}	\ar@/^1pc/[ul]^{x_{k_2}}
\\
 && 
 & *++[o][F-]{d-2}\ar@(dl, dr)[]_{x_i} 	\ar@/^1pc/[ul]^{x_{k_1}}	\ar@/^1pc/[r]^{x_{k_2}} &
 \ar@/^1pc/[l]^{x_{k_2}} \ar@{}[r]^{\ldots}	& 
 \ar@/^1pc/[r]^{x_{k_2}}	&
 *++[o][F-]{5}	\ar@(dl, dr)[]_{x_i}	\ar@/^1pc/[ur]^{x_{k_1}}	\ar@/^1pc/[l]^{x_{k_2}}
 }
\caption{Monodromy action for even $d$}
\label{fig:OpMdeven}
\end{figure}
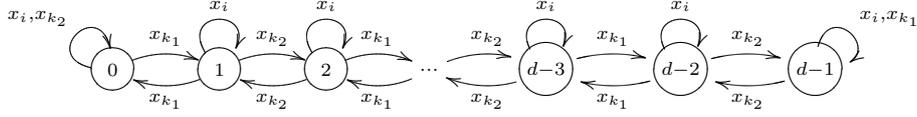
\begin{figure}[htbp]
\centering
\def\objectstyle{\scriptstyle}
\def\labeltstyle{\scriptstyle}
\xymatrix{
 *++[o][F-]{0} \ar@(l,u)[]^{x_i, x_{k_2}} 	\ar@/^/[r]^{x_{k_1}} 			&
 *++[o][F-]{1} \ar@(ul, ur)[]^{x_i} 	\ar@/^/[l]^{x_{k_1}} 	\ar@/^/[r]^{x_{k_2}}	&
 *++[o][F-]{2} \ar@(ul, ur)[]^{x_i} 	\ar@/^/[r]^{x_{k_1}} 	\ar@/^/[l]^{x_{k_2}} 	&
 \ldots \ar@/^/[l]^{x_{k_1}}  	\ar@/^/[r]^{x_{k_2}}					& 
 *++[o][F-]{d-3} \ar@(ul, ur)[]^{x_i} 	\ar@/^/[r]^{x_{k_1}} 	\ar@/^/[l]^{x_{k_2}} 	&
 *++[o][F-]{d-2} \ar@(ul, ur)[]^{x_i} 	\ar@/^/[l]^{x_{k_1}} 	\ar@/^/[r]^{x_{k_2}} 	&
 *++[o][F-]{d-1} \ar@(u,r)[]^{x_i, x_{k_1}} 	\ar@/^/[l]^{x_{k_2}} 
}
\caption{Monodromy action for odd $d$}
\label{fig:OpMdodd}
\end{figure}

\begin{remark}
  We excluded $n = 4$ and $n=6$ in our considerations, because the genus of $X_4$ and $X_6$ is one. In these cases the singularities of the base surface $X_n$ are removable (i.e. have angle $2 \pi$). Hence it is an additional assumption, that affine maps on the surface map singularities to singularities and that covering maps $p: Y \to X_n$ satisfy $p^{-1}(\Sigma(X_n)) = \Sigma(Y)$.
  If $n=4$ the covering surface $Y_{4,d}$ for even degree $d = 2l$ is the lattice surface $\Gamma_{2,0}^l$ defined in \cite{SchHu09} and the limit of our series is their lattice surface $\Gamma_{2,0}^\infty$.
  For $n=6$, considerations similar to Proposition 2.6.\! in \cite{Sch04} show that every affine map $f$ on $Y$ (respecting $\Sigma(Y) = p^{-1}(\Sigma(X_6))$ ) descends via a translation covering $p:Y \to X_6$ to an affine map $\tilde{f}$ on the twice punctured torus $X_6$ with $p \circ f = \tilde{f} \circ p$, i.e. $\Gamma(Y) \subseteq \Gamma(X_6) = \langle T_6, {R_6}^2 \rangle$. By considering some extra cases in the proof, it can be seen that Theorem 1 b)
  also holds for $n=6$.
\end{remark}

\subsection{The Veech group of the odd series.}
\label{sec:VeechUeberlagerungOdd}
The goal of this section is the proof of our main theorem for odd $n\geq5$.

\begin{ThmOdd}
For odd $n\geq 5$, the Veech group of $Y_{n,d}$ is
 $$\Gamma_n := \Gamma(Y_{n,d}) = \langle -I, T_n, R_n \, {T_n}^2 \, {R_n}^{-1}, \dots, {R_n}^{\frac{n-1}{2}} \ {T_n}^2 \, {R_n}^{-\frac{n-1}{2}} \rangle\;.$$
The matrices $R_n$ and $T_n$ are defined as in Chapter \ref{sec:nGon} 
as
$$R_n = \begin{pmatrix} \cos{\frac{\pi}{n}} & -\sin{\frac{\pi}{n}} \\ \sin{\frac{\pi}{n}} & \cos{\frac{\pi}{n}} \end{pmatrix} \; \textrm{and } \; T_n = \begin{pmatrix} 1 & \lambda_n \\ 0 & 1 \end{pmatrix} \; \textrm{where } \lambda_n= 2 \cot{\frac{\pi}{n}}$$
and $I \in \GL$ is the identity matrix.
\end{ThmOdd}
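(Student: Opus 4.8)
The plan is to combine the inclusion coming from Lemma~\ref{Lemma:prim} with the classical lifting criterion for affine diffeomorphisms, read off from the monodromy $m_{n,d}$. Since $X_n$ is primitive of genus $\frac{n-1}{2}>1$, Lemma~\ref{Lemma:prim} immediately gives $\Gamma(Y_{n,d})\subseteq\Gamma(X_n)=\langle R_n,T_n\rangle$, the Hecke group of signature $(2,n,\infty)$. Because $X_n$ has no nontrivial translations, every $A\in\Gamma(X_n)$ has a unique affine representative $\varphi_A$, and $A\in\Gamma(Y_{n,d})$ if and only if $\varphi_A$ lifts to $Y_{n,d}$. By covering theory this holds precisely when the induced automorphism $(\varphi_A)_*$ of $\pi_1(X_n\setminus\Sigma)$ carries the subgroup defining $p_{n,d}$ to a conjugate of itself, equivalently when $m_{n,d}\circ(\varphi_A)_*$ is conjugate in $S_d$ to $m_{n,d}$. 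The whole proof thus reduces to computing the action of $(\varphi_{R_n})_*$ and $(\varphi_{T_n})_*$ on the edge generators $x_0,\dots,x_{n-2}$ and testing this conjugacy condition.

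For the inclusion $\Gamma_n\subseteq\Gamma(Y_{n,d})$ I would check the generators one at a time, preferably by a direct cut-and-reglue description. The element $-I=R_n^{\,n}$ is the rotation by $\pi$; it exchanges $P_n$ and $Q_n$ and interchanges the two slit edges $x_{k_1},x_{k_2}$, and one verifies that relabelling the $d$ sheets by a suitable element of $\langle\sigma_{d,1},\sigma_{d,2}\rangle$ conjugates $m_{n,d}$ back to itself, so $-I$ lifts. For $T_n$ the map $\varphi_T$ twists every horizontal cylinder once; tracking how this twist multiplies the edge generators by the core curves, and using that the two slits lie in one and the same horizontal cylinder, shows that $m_{n,d}\circ(\varphi_T)_*$ is obtained from $m_{n,d}$ by a global relabelling of sheets, so the horizontal single twist lifts. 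The conceptual crux is the asymmetry between the horizontal direction and the directions $j\pi/n$: in a tilted cylinder direction a single twist multiplies the monodromy by the slit-carrying core permutation in a way that is \emph{not} realised by a global conjugation, while its square is; this is exactly why one must take the double twists $R_n^{\,j}T_n^{2}R_n^{-j}$. I would verify each of these by the same core curve bookkeeping, carried out in the cylinder decomposition in direction $j\pi/n$, which is the image of the horizontal decomposition under $\varphi_R^{\,j}$.

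The harder half is the reverse inclusion $\Gamma(Y_{n,d})\subseteq\Gamma_n$, i.e.\ ruling out every element of the Hecke group outside $\Gamma_n$. Here I would first determine the index $[\Gamma(X_n):\Gamma_n]$ directly from the generating set (a finite computation inside the Hecke group, independent of $d$), and then exhibit, for a set of coset representatives of $\Gamma_n$, induced automorphisms that genuinely change the conjugacy class of $m_{n,d}$, so that no proper overgroup of $\Gamma_n$ can lift. Geometrically this amounts to the statement that any affine diffeomorphism of $Y_{n,d}$ must permute the finitely many preimages of the slit saddle connections and preserve their incidence pattern, which forces its derivative to normalise the slit data and hence to lie in $\Gamma_n$. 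I expect this upper bound to be the main obstacle: the lower bound is a finite list of explicit constructions, whereas the upper bound needs either a complete and correct coset analysis in the Hecke group or a global argument that the slit configuration admits no further affine symmetries, and in either form the argument must be made uniform in the covering degree $d$.
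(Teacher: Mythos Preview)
Your overall framework---the monodromy lifting criterion $A\in\Gamma(Y_{n,d})\iff m_{n,d}\circ(\varphi_A)_*$ is $S_d$-conjugate to $m_{n,d}$---is sound and is essentially the algorithmic approach of \cite{Sch04}, \cite{Fr08} alluded to at the start of Section~4. The paper deliberately avoids it in favour of direct geometric constructions via cylinder decompositions. Two points in your sketch are either wrong or underestimate the work involved, and for the upper bound the paper's route is considerably more efficient.

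\emph{On $-I$.} The unique affine map with derivative $-I$ is rotation by $\pi$ about the midpoint of the glued edge $x_{n-1}$; it swaps $P_n$ with $Q_n$ and sends each edge $x_i$ to $x_i'$, so on $\pi_1$ it sends $x_i\mapsto x_i^{-1}$. It does \emph{not} interchange $x_{k_1}$ and $x_{k_2}$. The lift exists because $\sigma_{d,1},\sigma_{d,2}$ are involutions, hence $m_{n,d}(x_i^{-1})=m_{n,d}(x_i)$ and the monodromy is literally unchanged (Step~4 in the paper).

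\emph{On $T_n$.} You treat this as the easy case, but it is the hardest (Step~3). Precisely because both slits lie in the \emph{same} horizontal cylinder, the core curve there is $x_{k_1}x_{k_2}^{-1}$ with monodromy $\sigma_{d,2}^{-1}\sigma_{d,1}$, a product of cycles of length $d$ or $d/2$, not an involution. So the single twist does not fix the monodromy on the nose; one must produce an explicit conjugating permutation $\sigma_T$, and the paper constructs it with separate formulas for $d$ even and $d$ odd and verifies two compatibility conditions. In the tilted directions, by contrast, $x_{k_1}$ and $x_{k_2}$ sit in different cylinders with involutive core monodromy, so the double twist there is essentially automatic (Step~1). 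Your intuition about the asymmetry is right, but the delicate direction is the horizontal one.

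\emph{On the upper bound.} Here the paper's geometric argument buys a lot. Rather than computing $(\varphi_{R_n^l})_*$ and testing conjugacy for each $d$, the paper compares affine invariants of cylinders: the horizontal cylinder carrying both slits has preimages in $Y_{n,d}$ of inverse modulus $\tfrac{d}{2}\lambda_n$ or $d\lambda_n$, while in every direction $v_l$ ($1\le l\le n-1$) all preimage cylinders have inverse modulus $\lambda_n$ or $2\lambda_n$. A rotation preserves moduli, so $R_n^l$ cannot lift once $d\notin\{2,4\}$; the remaining degrees are handled by comparing cylinder heights. This invariant argument is uniform in $d$ without any $\pi_1$ bookkeeping, which is exactly the uniformity you flagged as the main obstacle in your approach.
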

In particular, $\Gamma_n$ is a subgroup of $\Gamma(X_n)$ of index $n$ (see Lemma \ref{Lemma:VeechGroupGen}) and independent of the covering degree $d$. At the end of this section, we deduce the basic properties of the Teichm\"uller curve defined by the translation surface $Y_{n,d}$ in Corollary \ref{cor:Curve}.
We start the proof of the theorem with the following lemma.
\begin{lemma}
 \label{Lemma:VeechGroupGen} 
 The group $G := \langle -I, T_n, R_n \, {T_n}^2 \, {R_n}^{-1}, \dots, {R_n}^{\frac{n-1}{2}} \ {T_n}^2 \, {R_n}^{-\frac{n-1}{2}} \rangle$ is of index $n$ in $\Gamma(X_n) = \langle R_n, T_n \rangle$. The set of cosets is
 $$G \bs \Gamma(X_n) = \{G \, I , G \, R_n, \dots, G \, {R_n}^{n-1} \}\, ,$$
 where $I \in \GL$ denotes the identity matrix.
\end{lemma}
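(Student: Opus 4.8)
The plan is to pass to $\mathrm{PSL}_2(\R)$ at once. Since $R_n^n = -I$, both $G$ and $\Gamma(X_n)$ contain $-I$, so writing $\bar\Gamma := \Gamma(X_n)/\{\pm I\}$ and $\bar G := G/\{\pm I\}$ one has $G = \pi^{-1}(\bar G)$ for the quotient map $\pi$, hence $[\Gamma(X_n):G] = [\bar\Gamma:\bar G]$ and the right cosets correspond bijectively. Thus it suffices to prove $[\bar\Gamma:\bar G] = n$ with $\bar G\backslash\bar\Gamma = \{\bar G\, r^i : 0\le i\le n-1\}$, where $r,t$ denote the images of $R_n, T_n$. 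Recall that $\bar\Gamma$ is the Hecke triangle group of signature $(2,n,\infty)$, i.e.\ the free product $\langle r\rangle \ast \langle s\rangle \cong \Z/n\Z \ast \Z/2\Z$ with $s := tr^{-1}$ the order-two generator; this is exactly the content of the relation $R_n T_n R_n^{-1} = R_n^{n+2} T_n^{-1}$, which in $\mathrm{PSL}_2(\R)$ reads $r t r^{-1} = r^2 t^{-1}$ and is equivalent to $s^2 = 1$. Writing $B_j := r^j t^2 r^{-j}$ we have $\bar G = \langle t, B_1, \dots, B_{(n-1)/2}\rangle$.

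For the bound $[\bar\Gamma:\bar G]\le n$ I would show that the $n$ sets $\bar G\, r^i$ ($i\in\Z/n\Z$) are permuted among themselves by right multiplication by $r^{\pm1}$ and $t^{\pm1}$; as their union contains $\bar G$ and $\bar\Gamma = \langle r,t\rangle$, this union must be all of $\bar\Gamma$. Right multiplication by $r$ permutes them cyclically since $r^n = 1$. The only real point is right multiplication by $t$: because $r^i t = (r^i t r^i)\,r^{-i}$, it suffices to prove that $u_i := r^i t r^i$ lies in $\bar G$ for every $i$, and then $\bar G\, r^i t = \bar G\, r^{-i}$. Using $r B_j r^{-1} = B_{j+1}$ together with the relation-consequence $r t r = B_1 t$ (equivalent to $t r^{-1} t = r$), an induction on $i$ gives the closed form $u_i = B_i B_{i-1}\cdots B_1 t$, so $u_i\in\bar G$ for $0\le i\le (n-1)/2$. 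The remaining residues are reached via the periodicity $u_{i+n}=u_i$ and the identity $u_{-i} = u_{i-1}^{-1}$, which follows directly from $t = r t^{-1} r$; together these fold every index back into the first half.

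For the reverse bound $[\bar\Gamma:\bar G]\ge n$ I would build an auxiliary homomorphism separating the cosets. Define $\psi:\bar\Gamma\to S_n$ on $\{0,\dots,n-1\} = \Z/n\Z$ by $\psi(r):i\mapsto i+1$ and $\psi(t):i\mapsto -i$. A direct check that $\psi(r)^n = \mathrm{id}$ and $\psi(r)\psi(t)\psi(r)^{-1} = \psi(r)^2\psi(t)^{-1}$ shows $\psi$ is well defined (equivalently $\psi(r)$ and $\psi(s) = \psi(t)\psi(r)^{-1}: i\mapsto 1-i$ have orders dividing $n$ and $2$, so $\psi$ extends over the free product). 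Every generator of $\bar G$ fixes $0$: indeed $\psi(t)(0)=0$, and as $\psi(t)$ is an involution $\psi(B_j) = \psi(r)^j\psi(t)^2\psi(r)^{-j} = \mathrm{id}$. Hence $\bar G\subseteq\mathrm{Stab}_\psi(0)$, whereas $\psi(r^k)(0) = k\ne 0$ for $1\le k\le n-1$; therefore $r^k\notin\bar G$ and the cosets $\bar G, \bar G r, \dots, \bar G r^{n-1}$ are pairwise distinct.

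Combining the two bounds gives $[\bar\Gamma:\bar G] = n$, and since the $n$ distinct cosets $\bar G\, r^i$ already exhaust $\bar\Gamma$ they form the complete coset set; lifting back to $\Gamma(X_n)$ yields the statement. The hard part is the covering step, and inside it the claim that $u_i = r^i t r^i\in\bar G$ for the indices $(n+1)/2\le i\le n-1$: the defining generators $B_j$ are only supplied for $j\le(n-1)/2$, so reaching the ``second half'' genuinely needs the order-two relation (through $t = r t^{-1} r$ and the resulting $u_{-i} = u_{i-1}^{-1}$). This is precisely the information encoded in $\psi(t):i\mapsto -i$, so both halves of the argument ultimately rest on the single relation $R_n T_n R_n^{-1} = R_n^{n+2} T_n^{-1}$.
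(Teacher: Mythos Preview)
Your argument is correct and takes a genuinely different route from the paper's.

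The paper works in the free group $F = \langle R, T\rangle$: it writes down a Schreier transversal $\{I, R, \dots, R^{n-1}\}$, takes the corresponding Schreier generators $\{R^n, T, R^jT R^{-(n-j)}\}$ of an index-$n$ subgroup $U$, checks word by word that $p(U) = G$ under the projection $p:F\to\Gamma(X_n)$, and finally verifies that the normal closure of the defining relations of $\Gamma(X_n)$ lies in $U$. Your proof bypasses the Reidemeister--Schreier machinery entirely: after passing to $\bar\Gamma \cong \Z/n\ast\Z/2$, the covering step (showing the $n$ cosets $\bar G r^i$ are closed under the generators) replaces the verification $N\subseteq U$, and the explicit homomorphism $\psi:\bar\Gamma\to S_n$ with $\psi(r)(i)=i+1$, $\psi(t)(i)=-i$ separates the cosets, replacing the Schreier index count. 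The key computation $u_i = r^itr^i = B_iB_{i-1}\cdots B_1 t$ together with the folding $u_{-i} = u_{i-1}^{-1}$ mirrors exactly the paper's word manipulations showing $R^jTR^{j-n}\in G$, but organised more transparently. What the paper's approach buys is an explicit Schreier generating set for $G$ (which it does not use elsewhere); what your approach buys is a shorter and more conceptual argument, since $\psi$ is visibly the permutation action of $\bar\Gamma$ on the cosets and $\psi(B_j)=\psi(r)^j\psi(t)^2\psi(r)^{-j}=\mathrm{id}$ makes the containment $\bar G\subseteq\mathrm{Stab}(0)$ immediate.
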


\begin{proof}
 Write $\Gamma(X_n) = F/N$, where $F$ is the free group on the generators $R$ and $T$ and $N$ is the normal subgroup corresponding to the relations of $R$ and $T$ in $\Gamma(X_n)$.
  The image of $\Gamma(X_n)$ in $\PSL$ is a triangle group with signature $(\infty,n,2)$, so a possible set of defining relations in $\PSL$ is $\{ R^n, (T^{-1} \, R)^2 \}$. An easy calculation shows, that $R^n = -I = (T^{-1} \, R)^2$. We deduce (see e.g. \cite{Joh97} Chapter 10), that a possible set of defining relations of $\Gamma(X_n)$ in $\SL$ is $\{ (T^{-1} \, R)^2 = R^n, R^{2n} = I, R^n T = T R^n \}$.\\
 
 The method of Schreier (see e.g. \cite{Joh97} Chapter 2) shows, that 
 $$SchreierGen = \{ R^n, T, R \, T \, ({R}^{n-1})^{-1}, \dots, {R}^{n-1} \, T \, {R}^{-1} \}$$
 is a free basis of an index $n$ subgroup $U$ of $F$ with Schreier transversal 
 $$\mathcal{T} = \{ I, R, \dots, {R}^{n-1}\} \,.$$
 
 Let $p: F \to \Gamma(X_n)$ denote the projection from $F$ onto $\Gamma(X_n)$. Our first claim is that $p(U) = G$.\\ 
 
 To simplify notation we skip the index $n$ in the following computations, i.e.\ $p(R) = R$ and $p(T) = T$.
 The inclusion $G \subset p(U)$ is immediate: The first two generators of $G$ belong to $p(SchreierGen)$, the other generators are contained in $\langle p(SchreierGen) \rangle$ since $R^j T^2 R^{-j} = R^j T (R^{n-j})^{-1} \cdot R^{n-j} T R^{-j}$.
 The inclusion $p(U) \subset G$ needs some more arithmetic.
 The relation $R^n =-I = (T^{-1} \, R)^2$ implies $T R^{-1} T = -R$ and $T^{-1} = -R^{-1} T R^{-1}$.
 It follows, that $R^j T (R^{n-j})^{-1} \in G$ for $1 \leq j \leq \frac{n-1}{2}$:
  $$ \begin{array}{cl}
      &(R^n)^j \cdot R^j T^2 R^{-j} \cdot R^{j-1} T^2 R^{-(j-1)} \cdot \dots \cdot R^2 T^2 R^{-2} \cdot R T^2 R^{-1} \cdot T \cdot (R^n)^{-1} \\
      =& \underbrace{(R^n)^j}_{(-I)^j} \cdot R^j T \underbrace{T R^{-1} T}_{-R} \underbrace{T R^{-1} \dots}_{-R} \dots \underbrace{\dots R^{-1} T}_{-R} \underbrace{T R^{-1} T}_{-R} \underbrace{T R^{-1} \cdot T}_{-R} \cdot R^{-n} \\
      =& (-I)^j (-I)^j R^j T R^{j-n} = R^j T (R^{n-j})^{-1}
     \end{array}$$
  If $\frac{n-1}{2} < j < n-1$ then $0 < n-1-j < \frac{n-1}{2}$, so that $R^{n-1-j} T (R^{n-(n-1-j)})^{-1} = R^{n-1-j} T R^{-j-1} \in G$. Because of   $$
     R^n \cdot (R^{n-1-j} T R^{-j-1})^{-1} 
     = -I \cdot R^{j+1} \underbrace{T^{-1}}_{-R^{-1} T R^{-1}} R^{-n+j+1}
     = R^j T R^{-n+j}
 $$
  this implies $R^j T (R^{n-j})^{-1} \in G$ for $\frac{n+1}{2} \leq j \leq n-2$.
  The equation $ R^n \cdot R^n \cdot T^{-1} = -I \cdot R^n \cdot (-R^{-1} T R^{-1}) = R^{n-1} T R ^{-1}$ completes the proof of the claim.\\
  
  It remains to show, that the kernel of $p$ is contained in $U$. Then the index $[\Gamma(X_n):G] = [F:U] = n$ with $\{ I, R, \dots, {R}^{n-1}\}$ a system of coset re\-pre\-sen\-ta\-tives. The kernel of the map $p$ is the normal closure of the set $Rel = \{ R^{n} (T^{-1} \, R)^{-2} , R^{2n}, R^n T R^{-n} T^{-1} \}$,
  so that we need to show $U w r = U w$ $\forall~w~\in~F$, $r \in Rel$. Because $\{ I, R, \dots, {R}^{n-1}\}$ is a system of coset representatives for $U$, this is equivalent to $R^j r R^{-j} \in U$ $\forall~r \in Rel, \, j \in \{1, \dots, n-1\}$.
  $$ \begin{array}{rcl}
      R^j \cdot R^n (T^{-1} \, R)^{-2} \cdot R^{-j} &=& R^{n + j - 1} T R^{-1} T R^{-j}\\
      &=& R^n \cdot R^{j-1} T (R^{n-(j-1)})^{-1} \cdot R^{n-j} T R^{-j} \in U \\ \noalign{\medskip}
      R^j \cdot R^{2n} \cdot R^{-j} &=& R^{2n} = (R^n)^2 \in U \\ \noalign{\medskip}
      R^j \cdot R^n T R^{-n} T^{-1} \cdot R^{-j} &=& R^n \cdot R^j T (R^{n-j})^{-1} R^{-n} R^{n-j} T^{-1} R^{-j} \\
      &=& R^n \cdot R^j T (R^{n-j})^{-1} \cdot R^{-n} \cdot (R^j T R^{-n+j})^{-1} \in U
     \end{array}
  $$ 
\end{proof}

The surface $X_n$ is primitive and has genus greater than one, so
$\Gamma(Y_{n,d})$ is a subgroup of $\Gamma(X_n)$ (see Lemma~\ref{Lemma:prim}). In combination with Lemma~\ref{Lemma:VeechGroupGen} the proof of Theorem 1 a) 
reduces to the proof of
 \begin{enumerate}[(i)]
 \item $\{ -I, T_n, R_n \, {T_n}^2 \, R_n^{-1}, \dots, {R_n}^{\frac{n-1}{2}} \ {T_n}^2 \, {R_n}^{-\frac{n-1}{2}} \} \subset \Gamma_n = \Gamma(Y_{n,d})$ and
  \item $I, R_n, \dots, {R_n}^{n-1}$ lie in different cosets of $\Gamma_n \bs \Gamma(X_n)$.
 \end{enumerate}

The rest of the proof of Theorem 1 a)
is divided into four steps. 
The first step is to prove, that the parabolic matrices $R_n \, {T_n}^2 \, {R_n}^{-1}, \dots, {R_n}^{\frac{n-1}{2}} \ {T_n}^2 \, {R_n}^{-\frac{n-1}{2}}$ with shearing factor $2 \lambda_n$ are contained in $\Gamma_n$. 
Afterwards we show, that $I$, $R_n, \dots, {R_n}^{n-1}$ lie in different cosets of $\Gamma_n \bs \Gamma(X_n)$.
The last two steps will show that $-I$ and $T_n$ belong to $\Gamma_n$.

\subsubsection*{Step 1: \boldmath{${R_n}^l \, {T_n}^2 \, {R_n}^{-l} \in \Gamma_n$} for \boldmath{$1 \leq l \leq \frac{n-1}{2} = k_1$}}
The affine map on $\R^2$, $x \mapsto M_l \cdot x$ with
$$M_l = {R_n}^l \, {T_n}^2 \, {R_n}^{-l} 
= \begin{pmatrix}
1 - 2 \lambda_n \cos(\frac{l \pi}{n}) \sin(\frac{l \pi}{n}) & 2 \lambda_n \cos^2(\frac{l \pi}{n}) \\ \noalign{\medskip}
-2 \lambda_n \sin^2(\frac{l \pi}{n}) & 1 + 2 \lambda_n \cos(\frac{l \pi}{n}) \sin (\frac{l \pi}{n})
\end{pmatrix}
$$
is the shearing in direction $v_l = {R_n}^l \cdot \begin{pmatrix} 1 \\ 0 \end{pmatrix} = \begin{pmatrix} \cos(l \pi /n) \\ \sin(l \pi /n) \end{pmatrix}$ with factor $2 \lambda_n$.
To find an affine map on $Y_{n,d}$ with derivative $M_l$, we investigate the cylinder decomposition of $Y_{n,d}$ in direction $v_l$. \\

Because of the rotational symmetry in $X_n$ the cylinder decomposition of $X_n$ in direction $v_l$ has the same properties as the horizontal cylinder decomposition (see page \pageref{fig:CylinderSizes}).
The vector $v_l$ lies in the same direction as the edge $x_{l/2}$ if $l$ is even and in the same direction as the edge $x_{k_1 + (l+1)/2}$, if $l$ is odd. The cylinders in the cylinder decomposition in direction $x_i$ contain gluings along the following edges (where all the indices are understood to be modulo $n$):
\begin{itemize}
 \item cylinder $1$: $x_{i-1}$ and $x_{i+1}$
 \item cylinder $2$: $x_{i-2}$ and $x_{i+2}$\\
      $\vdots$
 \item cylinder $\frac{n-1}{2}$: $x_{i-\frac{n-1}{2}}$ and $x_{i+\frac{n-1}{2}}$
\end{itemize}

Since the monodromy map for $Y_{n,d}$ sends $x_i$ to the identity in $S_d$ iff $i \notin\{k_1, k_2\}$, the $d$ copies of $X_n$ are connected only by the edges $x_{k_1}$ and $x_{k_2}$. Copies of cylinders in $X_n$ that contain neither $x_{k_1}$ nor $x_{k_2}$ are glued to themselves in $Y_{n,d}$. They keep their inverse modulus $\lambda_n$.
Cylinders that contain either $x_{k_1}$ or $x_{k_2}$ are glued according to $m_{n,d} (x_{k_1})$ or to $m_{n,d}(x_{k_2})$ respectively. For every cycle of length $a$ in $m_{n,d} (x_{k_1})$ (or in $m_{n,d}(x_{k_2})$), $a$ copies of the cylinder containing $x_{k_1}$ (or $x_{k_2}$) are glued to form one cylinder in $Y_{n,d}$ (see Figure \ref{fig:CylinderDiagonal}). Since both $m_{n,d} (x_{k_1})$ and $m_{n,d}(x_{k_2})$ have only cycles of length $2$, the inverse modulus of theses cylinders is $2\lambda_n$.
Non of the vectors $v_l$ is horizontal ($l \neq 0$), so that no cylinder decomposition in direction $v_l$ has a cylinder containing both edges $x_{k_1}$ and $x_{k_2}$.

\begin{figure}[htb]
 \centering
  \includegraphics[width=0.5\textwidth]{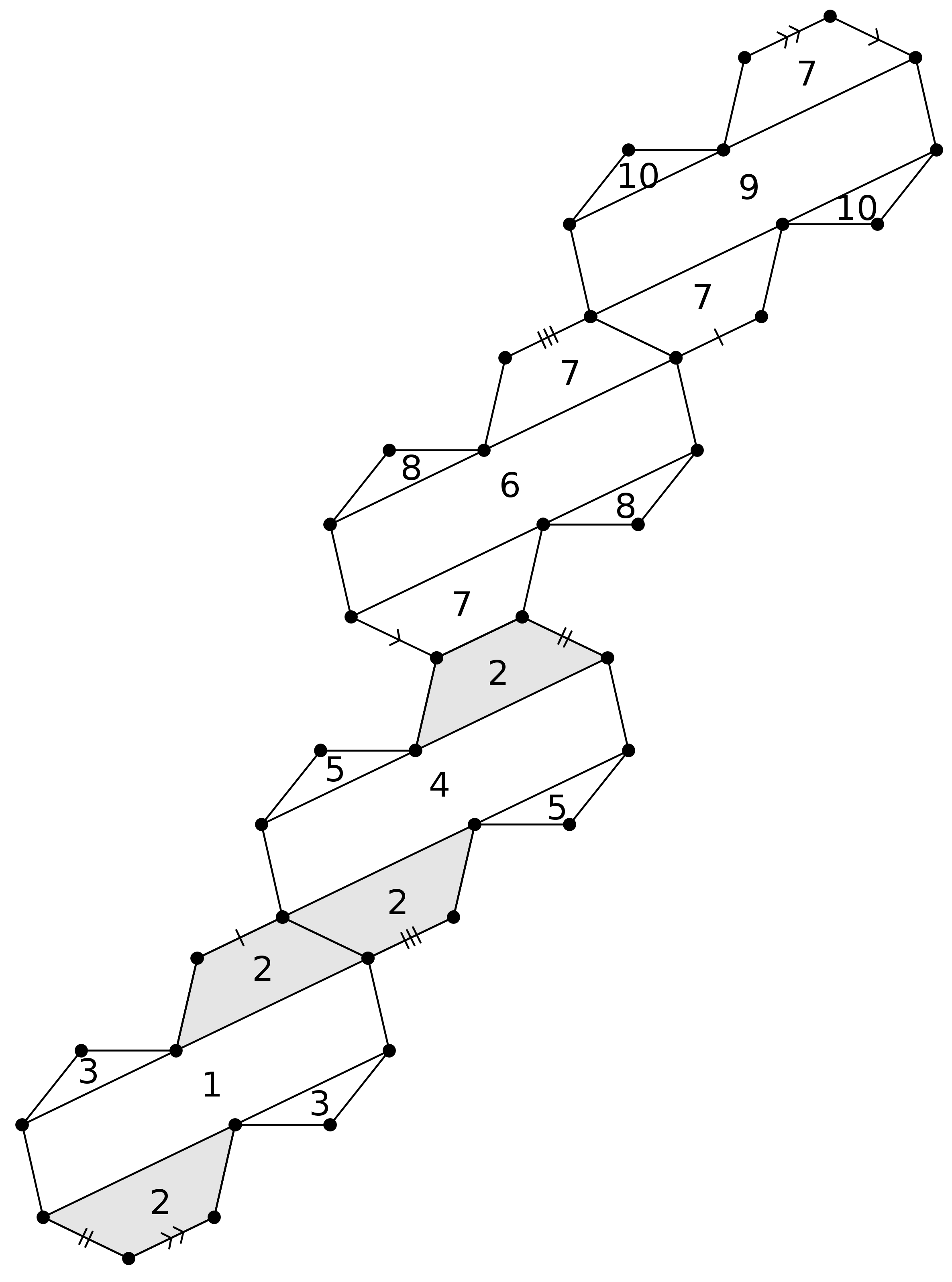}
 \caption{Diagonal cylinder decomposition}
 \label{fig:CylinderDiagonal}
\end{figure}

We may conclude, that $M_l$ is the derivative of the affine map $\phi_l$ on $Y_{n,d}$, fixing each saddle connection in direction $v_l$ pointwise and twisting the cylinders in the cylinder decomposition in direction $v_l$ once or twice (again see e.g. \cite{Vor96} Chapter 3.2 for a more detailed explanation of such an affine map).
It follows that ${R_n}^l \, {T_n}^2 \, {R_n}^{-l} \in \Gamma_n$ for $1 \leq l \leq k_1$.

\subsubsection*{Step 2: \boldmath{$I, R_n, \dots, {R_n}^{n-1}$} lie in different cosets of \boldmath{$\Gamma_n \bs \Gamma(X_n)$}}

It is sufficient to show, that ${R_n}^l \notin \Gamma_n \; \forall \, l \in \{1, \dots, n-1\}$, since $$\Gamma(X_n)\cdot {R_n}^i = \Gamma(X_n)\cdot{R_n}^j \Leftrightarrow {R_n}^{i-j} \in \Gamma(X_n)\,.$$
In this part of the proof we use the horizontal cylinder decomposition of $X_n$. In this decomposition, cylinder $k_1 = \frac{n-1}{2}$ contains the gluings along the edges $x_{k_1}$ and $x_{k_2}$. More precisely, a closed horizontal path in the cylinder $k_1$ describes an element in $\langle x_{k_1} {x_{k_2}}^{-1} \rangle \subset \pi_1(X_n \setminus \Sigma)$.
As above we conclude that for every cycle of length $a$ in $m_{n,d}(x_{k_1} {x_{k_2}}^{-1})$ $a$ copies of the cylinder $k_1$ are glued in $Y_{n,d}$ to form one cylinder.\\

If $d$ is even, then:
$$\begin{array}{l}
   m_{n,d}(x_{k_1} {x_{k_2}}^{-1}) = m_{n,d}({x_{k_2}}^{-1}) \circ m_{n,d}(x_{k_1}) \\
   = (\; (1 \; 2) \, (3 \; 4) \, \cdots \, (d-3 \;\; d-2)\,(d-1 \;\; 0) \;)^{-1} \circ (0 \; 1) \, (2 \; 3) \, \cdots \, (d-2 \;\; d-1)\\
   = (1 \; 2) \, (3 \; 4) \, \cdots \, (d-3 \;\; d-2) \, (d-1 \;\; 0) \circ (0 \; 1) \, (2 \; 3) \, \cdots \, (d-2 \;\; d-1)\\
   = (0 \;\; 2 \;\; 4 \dots d-2)\,(1\;\;d-1\;\;d-3 \dots 3)
  \end{array}$$
If $d$ is odd, it follows:
$$\begin{array}{l}
   m_{n,d}(x_{k_1} {x_{k_2}}^{-1}) = m_{n,d}({x_{k_2}}^{-1}) \circ m_{n,d}(x_{k_1}) \\
   = (1 \; 2) \, (3 \; 4) \, \cdots \, (d-2 \;\; d-1) \circ (0 \; 1) \, (2 \; 3) \, \cdots \, (d-3 \;\; d-2)\\
   = (0 \;\; 2 \;\; 4 \dots d-3 \;\; d-1\;\;d-2\;\;d-4 \dots 3\;\;1)
  \end{array}$$

Therefore the inverse modulus of the horizontal cylinders of $Y_{n,d}$, which map onto the cylinder $k_1$ of $X_n$, is $\frac{d}{2} \lambda_n$ if $d$ is even and $d \lambda_n$ if $d$ is odd. All the other horizontal cylinders in $Y_{n,d}$ have inverse modulus $\lambda_n$.

The conclusions made in step 1 about the cylinders in the direction $v_l$ do not rely on $l$ to be smaller or equal to $\frac{n-1}{2}$. They also apply for $1 \leq l \leq n-1$. We conclude that all the cylinders in direction $v_l$ have inverse modulus $\lambda_n$ or $2 \lambda_n$. 
Now suppose $\phi_{R^l}$ to be an affine map on $Y_{n,d}$ with derivative ${R_n}^l$. Then $\phi_{R^l}$ would send the horizontal cylinders onto the cylinders in direction $v_l$. As a rotation is length preserving, the moduli of the cylinders in horizontal direction would have to match the moduli of the ones in direction $v_l$.
It is an immediate consequence for $d \notin \{2,4\}$, that ${R_n}^l$ is not contained in $\Gamma_n$. Simply, because there is no cylinder in direction $v_l$ with inverse modulus $\frac{d}{2} \lambda_n$ or $d \lambda_n$ respectively.\\

For $d=2$, all the cylinders in horizontal direction have inverse modulus $\lambda_n$. In all other directions $v_l$, there is no cylinder containing both edges $x_{k_1}$ and $x_{k_2}$. On the other hand, only one of the edges $x_{k_1}$ or $x_{k_2}$ can lie in direction $v_l$, so that there is at least one cylinder containing one of $x_{k_1}$ or $x_{k_2}$. This cylinder has only one preimage cylinder $\tilde{c}$ in $Y_{n,d}$, so that the cylinder $\tilde{c}$ has inverse modulus $2 \lambda_n$, which leads to a contradiction.\\

If $d=4$, then there are $2$ cylinders with inverse modulus $2 \lambda_n$ in the horizontal decomposition of $Y_{n,d}$. For all directions $v_l$, where $x_{k_1}$ and $x_{k_2}$ do not lie in direction $v_l$, $Y_{n,d}$ has 4 cylinders with inverse modulus $2 \lambda_n$, $2$ from $x_{k_1}$ and $2$ from $x_{k_2}$. 
In the remaining cases, were $d=4$ and $v_l$ is parallel to $x_{k_1}$ or $x_{k_2}$, the moduli of the cylinders in the decomposition in direction $v_l$ match the ones of the horizontal decomposition. Here we have to examine the heights of the cylinders with inverse modulus $2 \lambda_n$. 
The cylinder $k_1 = \frac{n-1}{2}$ and of course also its preimages, have the height $2 \sin(\frac{\pi}{n}) \sin(\frac{\pi}{n})$ (see equation~\ref{eq:height} on page~\pageref{eq:height}). The cylinder of $X_n$ in direction $v_l$ which is parallel to $x_{k_1}$ or $x_{k_2}$ and contains the gluing along $x_{k_2}$ or $x_{k_1}$ respectively, is the cylinder $1$ (i.e.\ the image of cylinder $1$ in the horizontal decomposition under ${R_n}^l$). It has the height $2 \sin((n-2)\frac{\pi}{n}) \sin(\frac{\pi}{n})$. Using the consequence of the addition and subtraction theorems for $\sin$ and $\cos$ 
$$\sin x - \sin y = 2 \cos (\frac{x+y}{2}) \sin(\frac{x-y}{2})\;,$$
the difference of the heights of the cylinders with inverse modulus $2 \lambda_n$ in $Y_{n,d}$ can be simplified as follows:
$$\begin{array}{l}
    2 \sin(\frac{\pi}{n}) \sin(\frac{\pi}{n}) - 2 \sin(\frac{\pi}{n}) \sin((n-2 ) \frac{\pi}{n}) \\
   = 2 \sin(\frac{\pi}{n}) \cos(\frac{n-1}{2} \cdot \frac{\pi}{n}) \sin(\frac{-n+3}{2} \cdot \frac{\pi}{n}) \quad \neq 0
  \end{array}
$$
Since a rotation can not map cylinders with different heights onto each other, this case also leads to a contradiction.

\subsubsection*{Step 3: \boldmath{$T_n \in \Gamma_n$}}

To show, that $T_n$ is contained in the Veech group of $X_n$, we use again the horizontal cylinder decomposition of $X_n$. In the following we construct an affine map $\phi_T$ on $Y_{n,d}$ with derivative $T_n$.\\

As shown above, for all but the outermost cylinder of $X_n$ the preimage on $Y_{n,d}$ decomposes into $d$ cylinders with inverse modulus $\lambda_n$, respectively. 
An affine map $\phi_T$ with derivative $T_n$ on $Y_{n,d}$ twists these cylinders once and possibly permutes them, with say $\sigma_T$. Of course, if an inner cylinder $j$ in copy $i$ is mapped onto cylinder $j$ in copy $\sigma_T(i)$, then all the other inner cylinders in copy $i$ are mapped onto their correspondent in copy $\sigma_T(i)$.
The preimages of cylinder $\frac{n-1}{2}$ in $X_n$ (now called $c$) form $1$ or $2$ cylinders in $Y_{n,d}$, depending on $d$ being odd or even. These cylinders are sheared by the factor $\lambda_n$.

Recall, that $Y_{n,d}$ is a degree $d$ translation covering of $X_n$, in particular $Y_{n,d}$ contains $d$ copies of every point in $X_n$, except for the vertices which may be ramification points. The preimage of $c$ in one copy of $X_n$ is not connected, so from now on we use a slightly different labelling. To get the new labelling we cut the lower part of $c$ in each copy along the horizontal saddle connection and glue it along the edge $x_{k_1}$, i.e.\ the lower part of $c$ in copy $i$ now ``belongs'' to the copy $m_{n,d}({x_{k_1}}^{-1})(i)$ (see Figure \ref{fig:Label_C}).

\begin{figure}[htb]
 \centering
  \includegraphics[width=0.8\textwidth]{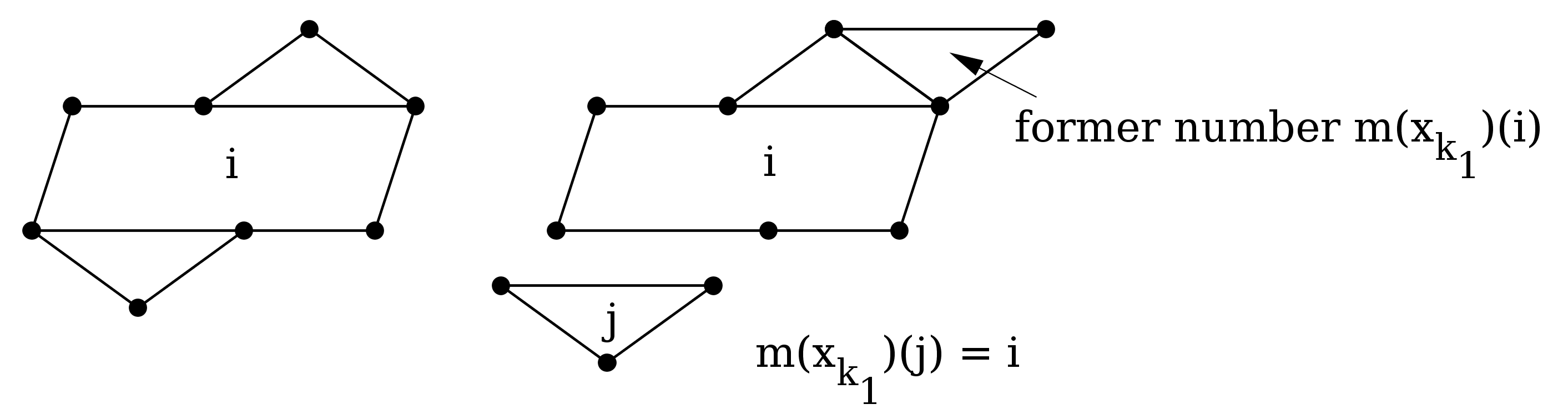}
 \caption{New labelling of the outermost cylinders}
 \label{fig:Label_C}
\end{figure}

Every new copy $i$ of $c$ has as lower neighbour copy $i$ of $X_n \setminus c$ and as upper neighbour copy $m_{n,d}(x_{k_1})(i)$ of $X_n \setminus c$ (see Figure \ref{fig:Phi_T}).

\begin{figure}[htb]
 \centering
  \includegraphics[width=0.7\textwidth]{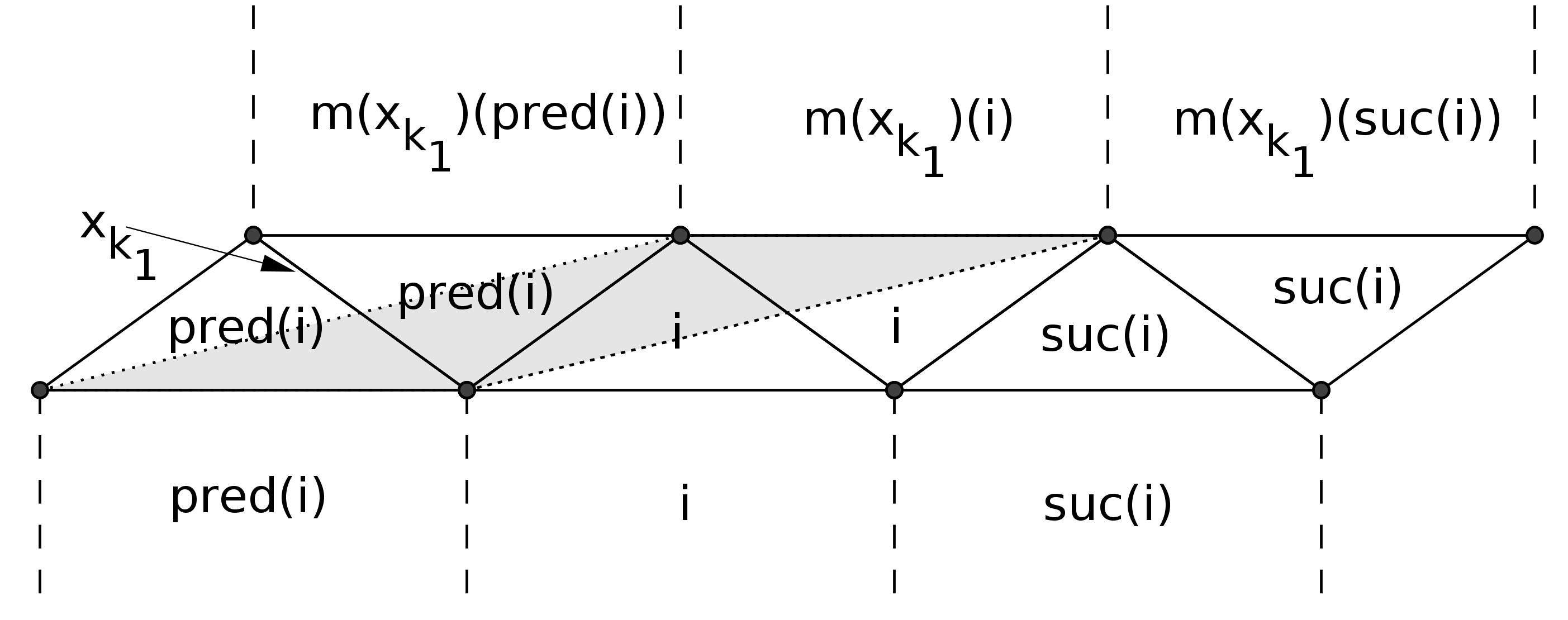}
 \caption{Schematic drawing of the outermost cylinder}
 \label{fig:Phi_T}
\end{figure}

The preimage cylinder(s) of $c$ define a natural order of the $d$ copies:
$$\suc(i) = m_{n,d}(x_{k_1} {x_{k_2}}^{-1})(i) \qquad \pred(i) = m_{n,d}(x_{k_2} {x_{k_1}}^{-1})(i)$$

All we need to prove is the existence of a permutation $\sigma_T$ of the copies of $X_{n,d}$ that fulfils the following two conditions:
\begin{enumerate}
 \item The lower neighbours retain their order, i.e.\ the image of the successor of $i$ equals the successor of the image of $i$:
	$$\sigma_T(\suc(i)) = \suc(\sigma_T(i))$$
 \item The upper neighbours are shifted by one, i.e.\ the upper neighbour of the image of $i$ is the image of the upper neighbour of the predecessor of $i$:
	$$m_{n,d}(x_{k_1}) (\sigma_T(i)) = \sigma_T(m_{n,d}(x_{k_1})(\pred(i)))$$
	Using the definition of $\pred$ and $m_{n,d}(a \cdot b) = m_{n,d}(b) \circ m_{n,d}(a)$, the equation can be written as 
	$$m_{n,d}(x_{k_1}) (\sigma_T(i)) = \sigma_T(m_{n,d}(x_{k_2})(i)) \,. $$
\end{enumerate}
Such a permutation induces an affine map $\phi_T$ as follows:\\
The inner cylinders of copy $i$ are mapped to the inner cylinders of copy $\sigma_T(i)$ and further twisted once. The preimages of $c$ are sheared by the factor $\lambda_n$. The lower bounding saddle connection of copy $i$ of $c$ is mapped onto the corresponding saddle connection in copy $\sigma_T(i)$, the upper saddle connection to its correspondent in copy $\sigma_T(\suc(i))$ (in the new labelling). The two conditions above ensure, that $\phi_T$ respects the gluings.

\begin{claim}
 For even $d$, $\sigma_T = (1 \;\; 3\;\;5 \dots d-3 \;\;d-1)$ satisfies condition 1 and 2.
\end{claim}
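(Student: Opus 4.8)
The plan is to verify conditions 1 and 2 directly from the explicit cycle structures of the permutations involved, since for even $d$ everything is completely explicit. We have $m_{n,d}(x_{k_1}) = \sigma_{d,1} = (0\;1)(2\;3)\cdots(d-2\;\;d-1)$ and $m_{n,d}(x_{k_2}) = \sigma_{d,2} = (1\;2)(3\;4)\cdots(d-3\;\;d-2)(d-1\;\;0)$, and the computation carried out in Step 2 already gives $\suc = m_{n,d}(x_{k_1}{x_{k_2}}^{-1}) = (0\;2\;4\cdots d-2)(1\;d-1\;d-3\cdots 3)$. The candidate $\sigma_T = (1\;3\;5\cdots d-3\;\;d-1)$ fixes every even label and cyclically shifts the odd labels. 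First I would record three elementary rules: $\sigma_{d,1}$ interchanges $2m \leftrightarrow 2m+1$; $\sigma_{d,2}$ interchanges $2m-1 \leftrightarrow 2m$ for $1 \leq m \leq d/2-1$ together with $d-1 \leftrightarrow 0$; and $\sigma_T$ sends $2m-1 \mapsto 2m+1$ (with the closing step $d-1 \mapsto 1$) while fixing all even labels.

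For condition 1, the key observation is that $\sigma_T(\suc(i)) = \suc(\sigma_T(i))$ is precisely the statement that $\sigma_T$ and $\suc$ commute. Both maps preserve the parity of a label, so it suffices to check them separately on the even set and the odd set. On the even set $\sigma_T$ is the identity and commutes trivially. On the odd set $\{1,3,\dots,d-1\}$ both maps are cyclic shifts of the \emph{same} cyclic ordering: $\sigma_T$ shifts it one way and $\suc$ shifts it the other way, so $\suc$ restricted to the odds is exactly $(\sigma_T)^{-1}$ restricted to the odds. Being powers of a common generator of the cyclic group on the odd labels, they commute, which yields $\sigma_T \circ \suc = \suc \circ \sigma_T$ and hence condition 1.

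For condition 2, rewritten as $\sigma_{d,1} \circ \sigma_T = \sigma_T \circ \sigma_{d,2}$, I would evaluate both sides on an arbitrary label, splitting into the cases ``$i$ even'' and ``$i$ odd'' and treating the wraparound indices $0$ and $d-1$ separately. Using the three elementary rules above one checks that an even label $2m$ with $m \geq 1$ gives $2m+1$ on both sides, the label $0$ gives $1$ on both sides, an odd label $2m-1$ with $1 \leq m \leq d/2-1$ gives $2m$ on both sides, and the label $d-1$ gives $0$ on both sides. These four cases exhaust all residues, so condition 2 holds.

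The computations are routine; the only real pitfall is the bookkeeping of indices modulo $d$, in particular making sure that the two boundary transpositions — the $(d-1\;\;0)$ in $\sigma_{d,2}$ and the closing step $d-1 \mapsto 1$ of $\sigma_T$ — are handled consistently with the interior pattern hidden behind the ``$\cdots$''. I therefore expect the cleanest write-up to isolate these wraparound cases explicitly, and I regard the main (minor) obstacle as notational rather than mathematical.
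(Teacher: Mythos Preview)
Your proposal is correct and follows essentially the same direct-verification approach as the paper. The only difference is presentational: the paper computes each side of the two conditions as a single permutation in cycle notation (obtaining $(0\;2\;4\cdots d-2)$ for condition~1 and the full $d$-cycle $(0\;1\;2\cdots d-1)$ for condition~2), whereas you argue condition~1 structurally via the observation $\suc|_{\text{odds}}=\sigma_T^{-1}|_{\text{odds}}$ and check condition~2 pointwise.
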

 \begin{proof}
 $$ \begin{array}{rcl}
   \sigma_T \circ \suc &=& (1 \;\; 3\;\;5 \dots d-3 \;\;d-1) \circ (0 \;\; 2 \;\; 4 \dots d-2)\,(1\;\;d-1\;\;d-3 \dots 3)\\
   &=& (0 \;\; 2 \;\; 4 \dots d-2)\\
   &=& (0 \;\; 2 \;\; 4 \dots d-2)\,(1\;\;d-1\;\;d-3 \dots 3) \circ (1 \;\; 3\;\;5 \dots d-3 \;\;d-1) \\
   &=& \suc \circ \sigma_T    
   \end{array}$$
  $$ \begin{array}{rcl}
     m_{n,d}(x_{k_1}) \circ \sigma_T 
     &=& (0 \; 1) \, (2 \; 3) \, \cdots \, (d-2 \;\; d-1) \circ (1 \;\; 3\;\;5 \dots d-3 \;\;d-1)\\
     &=& (0\;\;1\;\;2\;\;3\dots d-2\;\;d-1)\\
     &=& (1 \;\; 3 \dots d-3 \;\;d-1) \circ (0 \;\; d-1) \, (1 \; 2) \, \cdots \, (d-3 \;\; d-2)\\
     &=& \sigma_T \circ m_{n,d}(x_{k_2})
   \end{array}$$
 \end{proof}

\begin{claim}
 For odd $d$, $\sigma_T = m((x_{k_1} x_{k_2})^\frac{d-1}{2})$ satisfies condition 1 and 2.
\end{claim}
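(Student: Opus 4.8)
The plan is to reduce both conditions to elementary permutation identities by exploiting that $\sigma_{d,1}$ and $\sigma_{d,2}$ are involutions. I would write $a := m_{n,d}(x_{k_1}) = \sigma_{d,1}$ and $b := m_{n,d}(x_{k_2}) = \sigma_{d,2}$; being products of disjoint transpositions, both satisfy $a^2 = b^2 = \id$. Recall that the monodromy is an anti-homomorphism, so $m_{n,d}(x_{k_1}{x_{k_2}}^{-1}) = m_{n,d}({x_{k_2}}^{-1})\circ m_{n,d}(x_{k_1}) = b^{-1}\circ a = b\circ a$ (using $b^2=\id$); hence $\suc = ba$, which Step 2 already identified as a single $d$-cycle. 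The crucial first observation is that, because $b$ is an involution, the very same product arises from $x_{k_1}x_{k_2}$, namely $m_{n,d}(x_{k_1}x_{k_2}) = m_{n,d}(x_{k_2})\circ m_{n,d}(x_{k_1}) = ba = \suc$. Consequently $\sigma_T = m_{n,d}\bigl((x_{k_1}x_{k_2})^{(d-1)/2}\bigr) = \suc^{(d-1)/2}$; that is, $\sigma_T$ is simply a power of $\suc$.

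Condition 1 is then immediate: since $\sigma_T$ is a power of $\suc$ it commutes with $\suc$, so $\sigma_T\circ\suc = \suc\circ\sigma_T$. This is the entire content of the first condition and needs no computation.

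For Condition 2 the task is to verify $a\circ\sigma_T = \sigma_T\circ b$, i.e., writing $k = (d-1)/2$, the identity $a(ba)^k = (ba)^k b$. I would first slide $a$ through the product via the relation $a\cdot ba = ab\cdot a$, applied repeatedly (a one-line induction), to obtain $a(ba)^k = (ab)^k a$. Because $a$ and $b$ are involutions, $ab = (ba)^{-1} = \suc^{-1}$, so the left-hand side equals $\suc^{-k}a$ and the right-hand side equals $\suc^{k}b$. The identity thus reduces to $a = \suc^{2k}b = \suc^{d-1}b$. Since $\suc$ is a $d$-cycle, $\suc^{d-1} = \suc^{-1} = ab$, whence $\suc^{d-1}b = ab\cdot b = a$, which closes the argument.

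The main, and essentially only, obstacle is bookkeeping: one must consistently apply the anti-homomorphism convention $m_{n,d}(wv) = m_{n,d}(v)\circ m_{n,d}(w)$ when passing between words in $\pi_1(X_n\setminus\Sigma)$ and their permutations, and one must keep track of the involution relations $a^2 = b^2 = \id$, on which both the equality $\sigma_T = \suc^{(d-1)/2}$ and the final reduction $\suc^{d-1}b = a$ rest. Once the single-cycle property of $\suc$ established in Step 2 is invoked, no genuine calculation remains.
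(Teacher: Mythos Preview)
Your proposal is correct and follows essentially the same route as the paper's proof: both arguments rest on the involution identities $a^2=b^2=\id$ and on the fact (established in Step~2) that $\suc=ba$ is a $d$-cycle, hence $(ba)^d=\id$. Your explicit identification $\sigma_T=\suc^{(d-1)/2}$ makes Condition~1 slightly more transparent than the paper's direct computation, but the verification of Condition~2 is the same word-cancellation in the infinite dihedral quotient.
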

 \begin{proof}
 Recall, that $m_{n,d}(x_{k_1}) = \sigma_{d,1}$, $m_{n,d}(x_{k_2})= \sigma_{d,2}$ and that $\sigma_{d,1}$ as well as $\sigma_{d,2}$ is of order $2$.
 Since the permutation $m_{n,d}(x_{k_1} x_{k_2}) = \sigma_{d,2} \circ \sigma_{d,1} = (0 \;\; 2 \;\; 4 \dots d-3 \;\; d-1\;\;d-2\;\;d-4 \dots 3\;\;1)$ is of order $d$, it follows that $(\sigma_{d,2} \circ \sigma_{d,1})^d = \id$ and the second condition $$\sigma_{d,1} \circ (\sigma_{d,2} \circ \sigma_{d,1})^\frac{d-1}{2} = (\sigma_{d,2} \circ \sigma_{d,1})^d \circ \sigma_{d,1} \circ (\sigma_{d,2} \circ \sigma_{d,1})^\frac{d-1}{2} = (\sigma_{d,2} \circ \sigma_{d,1})^\frac{d-1}{2} \circ \sigma_{d,2}$$ holds.
 The first condition is immediate:
 $$\sigma_T \circ \suc = (\sigma_{d,2} \sigma_{d,1})^\frac{d-1}{2} \circ {\sigma_{d,2}}^{-1} \sigma_{d,1} = (\sigma_{d,2} \sigma_{d,1})^\frac{d+1}{2} = \suc \circ \sigma_T$$
 \end{proof}
The two claims finish the proof of the fact that $T_n \in \Gamma_n$.
 
\subsubsection*{Step 4: \boldmath{$-I \in \Gamma_n$}}
We need to construct an affine map $\phi_{-I}$ that is locally a rotation by $\pi$. Let $\phi_{-I}$ denote the map that rotates every copy of $X_n$ around the centre of its edge $x_{n-1}$ by $\pi$. Clearly the map sends every copy of $X_n$ to itself. The preimage of an edge $x_i$ is send to $x_i'$. It remains to show, that this is consistent with the gluing of the copies according to the monodromy map. The monodromy $m_{n,d}(x_i)$ for $i \notin \{k_1,k_2\}$ is trivial, so there is nothing to prove. The permutations $m_{n,d}(x_{k_1})$ and $m_{n,d}(x_{k_2})$ are involutions, so that the neighbour copy at $x_i$ equals the neighbour copy at $x_i'$ for all $i \in \{0, \dots n-2\}$. It follows, that $-I \in \Gamma_n$.\\

Step 1 to 4 together with Lemma \ref{Lemma:VeechGroupGen} and Lemma \ref{Lemma:prim} prove Theorem 1 a).\\

Knowing the Veech group of the translation surfaces $Y_{n,d}$ for all odd $n \geq 5$ and all $d \geq 2$, we can compute the basic properties of the Teichm\"uller curves belonging to the surfaces.

\begin{corollary}
\label{cor:Curve}
 $\HH / \Gamma_n$ has genus $0$ and $\frac{n+1}{2}$ cusps.
\end{corollary}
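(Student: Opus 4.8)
The plan is to realize $\HH/\Gamma_n$ as a degree-$n$ orbifold cover of the quotient of the Hecke group and to read off its invariants from the monodromy of this cover. First I would pass to $\PSL$: since $-I\in\Gamma_n$, the image $\bar\Gamma_n$ of $\Gamma_n$ still has index $n$ in $\bar\Gamma(X_n)$, the Hecke triangle group of signature $(2,n,\infty)$, whose quotient $\HH/\bar\Gamma(X_n)$ is a sphere carrying one cone point of order $2$, one of order $n$, and one cusp (the cusp stabiliser being $\langle T_n\rangle$). The natural map $\HH/\bar\Gamma_n\to\HH/\bar\Gamma(X_n)$ is then a degree-$n$ branched orbifold cover whose monodromy is the action of $\bar\Gamma(X_n)$ on the $n$ right cosets, with representatives $I,R_n,\dots,{R_n}^{n-1}$ as in Lemma~\ref{Lemma:VeechGroupGen}. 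The strategy is to compute the three permutations induced by $R_n$, $T_n$, and $S:=T_n^{-1}R_n$, count cusps as cycles of the parabolic permutation and elliptic points as branch data over the two cone points, and finally extract the genus from multiplicativity of the orbifold Euler characteristic.

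Concretely, I would record the permutations on $\{0,\dots,n-1\}$. The generator $R_n$ acts by $\sigma_R(j)\equiv j+1\pmod n$, an $n$-cycle, because ${R_n}^n=-I\in\Gamma_n$. The crucial input is the action of the parabolic $T_n$, and this is already supplied by the Reidemeister--Schreier data in Lemma~\ref{Lemma:VeechGroupGen}: the Schreier generators ${R_n}^j\,T_n\,({R_n}^{n-j})^{-1}$ lie in $\Gamma_n$, which says exactly that $\Gamma_n{R_n}^j T_n=\Gamma_n{R_n}^{\,n-j}$. Hence $\sigma_T$ fixes $0$ and swaps $j\leftrightarrow n-j$ for $1\le j\le n-1$; in particular $\sigma_T$ is an involution. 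Finally, since $(T_n^{-1}R_n)^2=-I$, the element $S$ has order $2$ in $\PSL$, and the coset computation $\Gamma_n{R_n}^j T_n^{-1}R_n=\Gamma_n{R_n}^{\sigma_T(j)}R_n=\Gamma_n{R_n}^{\sigma_T(j)+1}$ gives $\sigma_S(j)\equiv\sigma_T(j)+1\pmod n$.

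From this data everything is bookkeeping. The number of cusps equals the number of cycles of $\sigma_T$, namely the fixed point $\{0\}$ together with the $(n-1)/2$ transpositions $\{j,n-j\}$, for a total of $1+\tfrac{n-1}{2}=\tfrac{n+1}{2}$. Over the order-$n$ cone point the monodromy $\sigma_R$ is a single $n$-cycle, so there is one totally ramified point of cone angle $2\pi$: no order-$n$ elliptic point survives. Over the order-$2$ cone point I count the fixed points of the involution $\sigma_S$; solving $\sigma_S(j)=j$ gives $\sigma_S(0)=1\neq0$ and, for $j\ge1$, $n+1-j\equiv j$, whose only solution in range is $j=\tfrac{n+1}{2}$ (an integer since $n$ is odd). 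Thus $\bar\Gamma_n$ has exactly one elliptic point of order $2$.

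It remains to pin down the genus. Using multiplicativity of the orbifold Euler characteristic under the degree-$n$ cover,
\[
\chi^{\mathrm{orb}}(\HH/\bar\Gamma_n)=n\cdot\chi^{\mathrm{orb}}(\HH/\bar\Gamma(X_n))=n\!\left(\tfrac1n-\tfrac12\right)=1-\tfrac n2 .
\]
Comparing with $\chi^{\mathrm{orb}}(\HH/\bar\Gamma_n)=2-2g-\tfrac12-\tfrac{n+1}{2}$, obtained from one order-$2$ point and $\tfrac{n+1}{2}$ cusps, forces $g=0$. I expect the only genuinely substantive step to be the determination of $\sigma_T$ (and hence $\sigma_S$), i.e.\ the coset action of the parabolic generator $T_n$; this is the single non-formal ingredient, and it is exactly what the Schreier-generator computation of Lemma~\ref{Lemma:VeechGroupGen} delivers, after which the cusp count, the elliptic-point count, and the genus all follow mechanically.
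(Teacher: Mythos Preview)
Your argument is correct and takes a genuinely different route from the paper. The paper builds an explicit fundamental region $G=\bigcup_{j=0}^{n-1}{R_n}^{j}(F)$ out of $n$ copies of a fundamental quadrilateral $F$ for $\Gamma(X_n)$, identifies the edge pairings by the elements ${R_n}^{j}T_n{R_n}^{-(n-j)}$, lists the inequivalent vertices and cusps by inspection, and then computes the ordinary Euler characteristic of the resulting cell complex to get $\chi=2$, hence genus $0$. You instead work with the degree-$n$ orbifold cover $\HH/\bar\Gamma_n\to\HH/\bar\Gamma(X_n)$ and its coset monodromy: the cycle structure of $\sigma_T$ counts cusps, that of $\sigma_S$ counts elliptic points of order $2$, that of $\sigma_R$ handles the order-$n$ point, and multiplicativity of the orbifold Euler characteristic yields the genus. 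Both proofs ultimately rest on the same input, namely the coset action encoded in the Schreier generators of Lemma~\ref{Lemma:VeechGroupGen}; your approach is more algebraic and mechanical (and makes the single elliptic point of order~$2$ explicit), while the paper's is more geometric and furnishes a concrete fundamental domain for $\Gamma_n$ as a by-product.
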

\begin{proof}
 A fundamental region $F$ of $\Gamma(X_n)$ is shown in Figure~\ref{fig:FundX5}. 
 Since $\Gamma(X_n) = \Gamma_n \, I \cup \Gamma_n \, R_n \cup \dots \cup \Gamma_n \, {R_n}^{n-1}$, the fundamental region of $\Gamma_n$ is $G = I(F) \cup R_n(F) \cup \dots \cup {R_n}^{n-1}(F)$ (see e.g. \cite{Kat92} Theorem 3.1.2). 
Figure~\ref{fig:FundamentalGamma} illustrates the fundamental region $G$ of $\Gamma_5$ in a schematic picture.
\begin{figure}[htbp]
\centering
\subfigure[$\Gamma(X_n) = \langle T_n, R_n \rangle$]{
\label{fig:FundX5}
\centering
\includegraphics[width=0.35\textwidth]{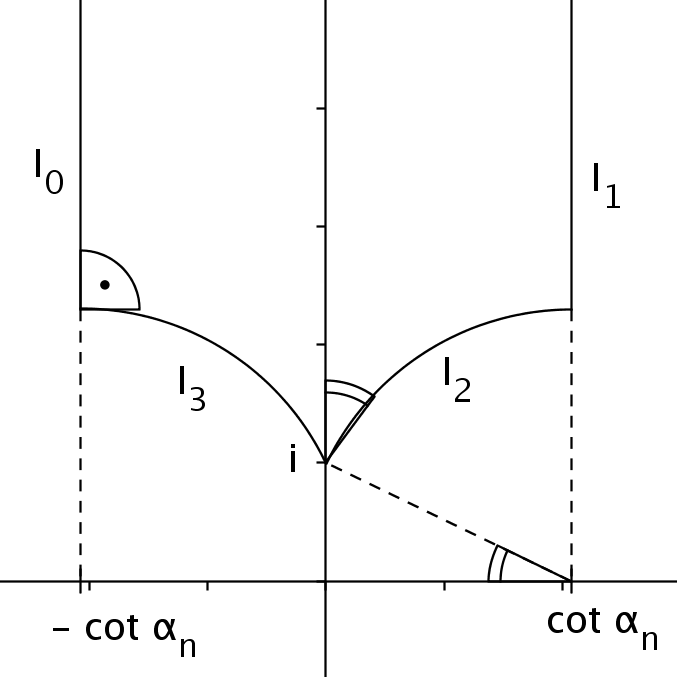}
}
\subfigure[$\Gamma_5$]{
 \label{fig:FundamentalGamma}
\centering
 \includegraphics[width=0.35\textwidth]{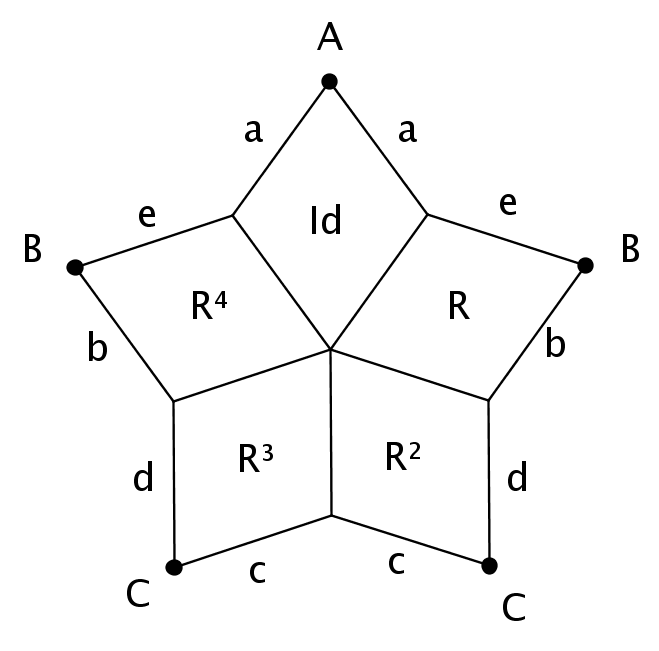}
}
 \caption{Fundamental regions}
 \label{fig:FundamentalbereichX5}
\end{figure}
The elements $T_5$, $R_5 T_5 {R_5}^{-4}$, ${R_5}^2 T_5 {R_5}^{-3}$, ${R_5}^3 T_5 {R_5}^{-2}$ and ${R_5}^4 T_5 {R_5}^{-1}$ identify the edges with labels $a$, $b$, $c$, $d$ and $e$. The cusps in $\R \cup \infty$, i.e.\ the non equivalent vertices of the fundamental region of $\Gamma_5$, are labelled by the letters $A$ to $C$, where $A = \{\infty\}$, $B = \{\cot \alpha_5, -\cot \alpha_5\}$ and $C = \{ \cot 2 \alpha_5, - \cot 2 \alpha_5 \}$, with $\alpha_n = \frac{\pi}{n}$. 
At the three cusps in $\HH / \Gamma_5$, one, two and two copies of the fundamental region of $\Gamma(X_5)$ meet, respectively. We say the cusps have \textit{relative width} $1$, $2$ and $2$. In addition it can be seen, that the Euler characteristic $\chi(\HH / \Gamma_5) = (3 + 4) - (2 \cdot 5) + 5 = 2$ and thus the genus of $\HH / \Gamma_5$ is $0$.\\

In general, the fundamental region $G$ of $\Gamma_n$ consists of $n$ quadrilaterals with one cusp $A_0 = \{ \infty\}$ of relative width $1$ and $\frac{n-1}{2}$ cusps $A_i = \{\cot i \alpha_n, -\cot i \alpha_n\}$ for $i = 1, \dots, \frac{n-1}{2}$ of relative width $2$. In addition, $\HH / \Gamma_n$ has a vertex at $i$ and $\frac{n+1}{2}$ non equivalent vertices at 
 $\cot \alpha_n + \frac{i}{\sin \alpha_n}$, $R (\cot \alpha_n + \frac{i}{\sin \alpha_n})$, $\dots$, $R^\frac{n-1}{2} (\cot \alpha_n + \frac{i}{\sin \alpha_n})$.
It follows, that $\chi(\HH / \Gamma_n) = (\frac{n+1}{2} + 1 + \frac{n+1}{2}) - (2 \cdot n) + n = 2$ which implies $g(\HH / \Gamma_n) = 0$.
\end{proof}

\subsection{The Veech group of the even series.}
In this section, we explicitly determine the Veech group for $Y_{n,d}$ in the case that $n$ is even. Just like in Section \ref{sec:VeechUeberlagerungOdd} the Veech group depends only on $n$ and is generated by $-I$ and parabolic matrices. Hence we obtain Theorem~1~b) 
as analogue of Theorem~1~a). 
\begin{ThmEven}
For even $n\geq 8$, the Veech group of $Y_{n,d}$ is
 $$\begin{array}{rl}
    \Gamma_n :=& \Gamma(Y_{n,d})\\
    = &\langle -I, T_n, {R_n}^2 \, {T_n}^2 \, {R_n}^{-2}, \dots, {R_n}^{n-2} \, {T_n}^2 \, {R_n}^{-(n-2)}, \, ({T_n}^{-1}{R_n}^2)^2, \\ \noalign{\smallskip}
     & {R_n}^2 \,({T_n}^{-1}{R_n}^2)^2 \, {R_n}^{-2}, \dots, {R_n}^{n-2} \, ({T_n}^{-1}{R_n}^2)^2 \, {R_n}^{-(n-2)} \rangle\;.
   \end{array} $$
The matrices $R_n$ and $T_n$ are defined as in Chapter \ref{sec:nGon} 
as
$$R_n = \begin{pmatrix} \cos{\frac{\pi}{n}} & -\sin{\frac{\pi}{n}} \\ \sin{\frac{\pi}{n}} & \cos{\frac{\pi}{n}} \end{pmatrix} \; \textrm{and } \; T_n = \begin{pmatrix} 1 & \lambda_n \\ 0 & 1 \end{pmatrix} \; \textrm{where } \lambda_n= 2 \cot{\frac{\pi}{n}}$$
and $I \in \GL$ is the identity matrix.
\end{ThmEven}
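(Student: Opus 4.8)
The plan is to follow the exact same four-step strategy used for the odd case, adapting the combinatorics to the even geometry. As in the odd case, the surface $X_n$ is primitive with genus greater than one (see Remark~\ref{rem:genus_even}), so Lemma~\ref{Lemma:prim} gives $\Gamma(Y_{n,d}) \subseteq \Gamma(X_n)$. The first preparatory step is an analogue of Lemma~\ref{Lemma:VeechGroupGen}: I would show that the group $G$ generated by the listed matrices has index $n/2$ in $\Gamma(X_n) = \langle T_n, {R_n}^2 \rangle$, with coset representatives $I, {R_n}^2, \dots, {R_n}^{n-2}$. This reduces the theorem to proving (i) that all listed generators lie in $\Gamma_n = \Gamma(Y_{n,d})$, and (ii) that $I, {R_n}^2, \dots, {R_n}^{n-2}$ lie in distinct cosets of $\Gamma_n \bs \Gamma(X_n)$. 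The Schreier-method computation here should mirror the odd case, using the defining relations of the $(\frac{n}{2},\infty,\infty)$ triangle group together with ${R_n}^n = -I$; the extra family of generators $({T_n}^{-1}{R_n}^2)^2$ and its rotated conjugates reflects that in the even case the second parabolic direction is genuinely distinct rather than being absorbed, so I would track it carefully.

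For Step 1 (the parabolic generators ${R_n}^{2l}\,{T_n}^2\,{R_n}^{-2l}$ and the ${R_n}^{2l}\,({T_n}^{-1}{R_n}^2)^2\,{R_n}^{-2l}$ lie in $\Gamma_n$), I would use the cylinder decomposition of $Y_{n,d}$ in each relevant direction, exactly as on page~\pageref{fig:CylinderDiagonal}. The key observations transfer: away from the edges $x_{k_1}, x_{k_2}$ the preimage cylinders are glued to themselves and keep inverse modulus $\lambda_n$, while the two-cycles of $m_{n,d}(x_{k_1})$ and $m_{n,d}(x_{k_2})$ fuse pairs of cylinders into ones of inverse modulus $2\lambda_n$. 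The only genuinely new point is that in the even geometry there are two inequivalent parabolic directions realizing the relevant shears (captured by $T_n^2$ versus $({T_n}^{-1}{R_n}^2)^2$), and I must check that the monodromy cycle structure in each of these directions again consists only of $2$-cycles, so that each conjugate is the derivative of a multitwist affine map on $Y_{n,d}$. This is the step I expect to require the most care, because the indexing of cylinders by direction in the even $n$-gon differs between $n \equiv 0$ and $n \equiv 2 \pmod 4$ (reflecting the one-versus-two singularity split of Remark~\ref{rem:genus_even}), so the verification that no cylinder contains both $x_{k_1}$ and $x_{k_2}$ must be done per case.

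For Step 2 (coset separation), I would compute $m_{n,d}(x_{k_1}x_{k_2}^{-1})$ as in the odd case and read off that the horizontal cylinder $k_1$ lifts to cylinders of inverse modulus $\frac{d}{2}\lambda_n$ or $d\lambda_n$, which ${R_n}^{2l}$ cannot match for $d \notin \{2,4\}$; the small cases $d=2,4$ are handled by the same modulus-and-height comparison, using the height formula~(\ref{eq:widthAndHeightEven}) in place of~(\ref{eq:height}). Step 3 ($T_n \in \Gamma_n$) and Step 4 ($-I \in \Gamma_n$) go through essentially verbatim: the permutation $\sigma_T$ satisfying the successor-preservation and upper-neighbour-shift conditions is defined by the same formulas (distinguishing $d$ even from $d$ odd), and $\phi_{-I}$ is again the fibrewise rotation by $\pi$ about the midpoint of the identifying edge, which is consistent with the gluing because $m_{n,d}(x_{k_1})$ and $m_{n,d}(x_{k_2})$ are involutions. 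Since the paper announces it will keep the even proof short, I would state these last two steps as direct transcriptions of the odd argument and only spell out the modifications forced by the second parabolic direction and by the $n \bmod 4$ case distinction.
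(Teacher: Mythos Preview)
Your overall architecture matches the paper's, but there is a genuine gap in Step~1 that you have not anticipated. You write that you will verify ``that no cylinder contains both $x_{k_1}$ and $x_{k_2}$'' in each non-horizontal direction. In the even case this verification \emph{fails} for exactly one direction: using the identity $({T_n}^{-1}{R_n}^2)^2 = {R_n}^{-1}{T_n}^2 R_n$, the second family of generators are shears in the odd directions $v_{2l-1}$, and among all directions $v_j$ with $j\in\{-1,1,2,\dots,n-2\}$ the vertical one $v_{n/2}$ has an (innermost) cylinder containing both $x_{k_1}$ and $x_{k_2}$. The core-curve monodromy there is $\sigma_{d,1}\sigma_{d,2}$, not a product of $2$-cycles, so the multitwist argument you propose does not produce an affine map with derivative the corresponding ${R_n}^{2l}{T_n}^2{R_n}^{-2l}$ (if $n\equiv 0\bmod 4$, $l=n/4$) or ${R_n}^{2l}({T_n}^{-1}{R_n}^2)^2{R_n}^{-2l}$ (if $n\equiv 2\bmod 4$, $l=(n+2)/4$). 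The paper handles this exceptional direction by rotating it to the horizontal, relabelling, and rerunning the Step~3 argument: one finds a permutation $\sigma_M$ (in fact $\sigma_T^{-1}$) satisfying the two compatibility conditions, yielding an affine map with derivative $M={R_n}^{n/2}T_n{R_n}^{-n/2}$ and hence $M^2$.

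There is a matching omission in Step~2. When $n\equiv 0\bmod 4$ and $l=n/4$, the vertical decomposition of $Y_{n,d}$ also has long cylinders of inverse modulus $d\lambda_n$ or $\tfrac{d}{2}\lambda_n$, so your modulus-mismatch argument does not exclude ${R_n}^{n/2}$ for any $d$; one must instead compare heights (via formula~(\ref{eq:widthAndHeightEven})) of these long cylinders in the horizontal versus vertical decompositions. Apart from these two exceptional-direction issues, your plan coincides with the paper's.
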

The proof is very similar to the proof of Theorem 1 a). Hence we will state the outline of the proof while mainly referring to the corresponding proofs in Section \ref{sec:VeechUeberlagerungOdd}.
\begin{proof}[Concept of proof:]
 Using similar arguments as in Lemma \ref{Lemma:VeechGroupGen} one obtains that $\Gamma_n$ is an index $\frac{n}{2}$ subgroup of $\Gamma(X_n) = \langle {R_n}^2, T_n\rangle$ with coset representatives $I, {R_n}^2, {R_n}^4, \dots, {R_n}^{n-2}$.
 We now have to repeat the steps 1 to 4. Step 3 and 4 work the same way as in Section \ref{sec:VeechUeberlagerungOdd}. Thus it is sufficient to carry out step 1 and step 2.

\paragraph{Step 1:}
We have to find affine maps with parabolic derivatives ${R_n}^{2l} \, {T_n}^2 \, {R_n}^{-2l}$ for $l \in \{1, \dots, \frac{n-2}{2}\}$ and ${R_n}^{2l} \, ({T_n}^{-1}{R_n}^2)^2 \, {R_n}^{-2l}$ for $l \in \{0, \dots, \frac{n-2}{2}\}$. 
Let $v_j = {R_n}^j \cdot \begin{pmatrix} 1 \\ 0 \end{pmatrix}$.
The matrix ${R_n}^{2l} \, {T_n}^2 \, {R_n}^{-2l}$ is the derivative of the shearing in direction $v_{2l}$ with factor $2 \lambda_n$.
A consequence of the relation $R_n {T_n}^{-1} R_n = -T_n$ is $({T_n}^{-1}{R_n}^2)^2 = {R_n}^{-1} {T_n}^2 R_n$, so
the matrix ${R_n}^{2l} \, ({T_n}^{-1}{R_n}^2)^2 \, {R_n}^{-2l}$ 
is the derivative of the shearing in direction $v_{2l-1}$ with factor $2 \lambda_n$. 
Hence we investigate the cylinder decompositions of $X_n$ and $Y_{n,d}$ in the directions $v_j$ for $j \in \{-1,1,2, \dots, n-2 \}$.
In direction $v_{2l-1}$, $X_n$ decomposes into $\frac{n}{4}$ cylinders if $n \equiv 0 \mmod 4$ and into $\frac{n+2}{4}$ cylinders, if $n \equiv 2 \mmod 4$. A short calculation shows that the innermost cylinder has inverse modulus $\frac{1}{2} \lambda_n$ and that all other cylinders have inverse modulus $\lambda_n$. 
Recall that the cylinders in the cylinder decomposition in direction $v_{2l}$ have inverse modulus $\lambda_n$ (see equation~\ref{eq:widthAndHeightEven} on page~\pageref{eq:widthAndHeightEven}).
Only the vertical decomposition ($j = \frac{n}{2}$) contains a cylinder containing both $x_{k_1}$ and $x_{k_2}$ ($j \neq 0$ excludes the horizontal direction).
We conclude that the arguments in the odd case also hold in the even case for the parabolic matrices 
${R_n}^{2l} \, {T_n}^2 \, {R_n}^{-2l}$ where $l \in \{1, \dots, \frac{n-2}{2}\}$, $l \neq \frac{n}{4}$ and for ${R_n}^{2l} \,({T_n}^{-1}{R_n}^2)^2 \, {R_n}^{-2l}$ where $l \in \{0, \dots, \frac{n-2}{2}\}$, $l \neq \frac{n+2}{4}$.

For $n \equiv 0 \mmod 4$ and $l = \frac{n}{4}$, the innermost cylinder in the decomposition of $X_n$ in direction $v_{2l}$ contains $x_{k_1}$ and $x_{k_2}$. This cylinder has $1$ or $2$ preimage cylinders in $Y_{n,d}$ with inverse modulus $d \lambda_n$ or $\frac{d}{2} \lambda_n$, respectively (depending on $d$ being odd or even). If we rotate $Y_{n,d}$ and $X_n$ clockwise by 90 degrees and introduce a new labelling of the preimages of $X_n$ in $Y_{n,d}$ (see Figure \ref{fig:Label_X8New}), we can repeat the arguments of step 3 in Section \ref{sec:VeechUeberlagerungOdd} and find an affine map for $M = {R_n}^{\frac{n}{2}} \, T_n \, {R_n}^{-\frac{n}{2}}$.
\begin{figure}[htb]
 \centering
  \includegraphics{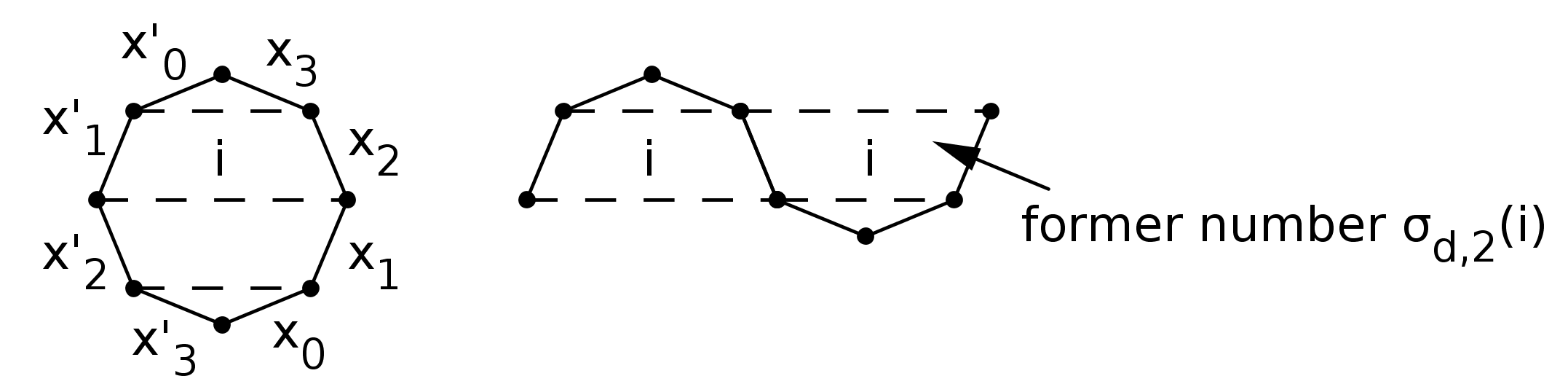}
 \caption{New labelling of the cylinders for $X_8$}
 \label{fig:Label_X8New}
\end{figure}
Then of course $M^2 = {R_n}^{\frac{n}{2}} \, {T_n}^2 \, {R_n}^{-\frac{n}{2}}$ lies in $\Gamma(Y_{n,d})$.
To find an affine map for $M$ we have to find a permutation $\sigma_M$ of the copies of $X_{n}$ in $Y_ {n,d}$ that fulfils the two conditions $\sigma_M(\suc(i)) = \suc(\sigma_M(i))$ and $\sigma_{d,2} (\sigma_M(i)) = \sigma_M(\sigma_{d,2}(\pred(i)))$ with $\suc(i)= \sigma_{d,1}(\sigma_{d,2}(i))$ and $\pred = \suc^{-1}$. The permutation $\sigma_T^{-1}$ with $\sigma_T$ as in step 3 in Section \ref{sec:VeechUeberlagerungOdd} fulfils these two conditions.

The same arguments prove that $M^2= {R_n}^{\frac{n+2}{2}} \, ({T_n}^{-1}{R_n}^2)^2 \, {R_n}^{-\frac{n+2}{2}}$ is contained in the Veech group of $Y_{n,d}$ if $n \equiv 2 \mmod 4$ ($l = \frac{n+2}{4}$).

\paragraph{Step 2:}
In the same way as in Section \ref{sec:VeechUeberlagerungOdd} we obtain that ${R_n}^{2l} \notin \Gamma(Y_{n,d})$ for $l \in \{1, \dots, \frac{n-2}{2}\}$, $l \neq \frac{n}{4}$ if $d \neq 4$ or if $d=4$ and $v_l = {R_n}^{2l} \cdot \begin{pmatrix} 1 \\ 0 \end{pmatrix}$ is not parallel to $x_{k_1}$ or $x_{k_2}$. 

If $d=4$ and $v_l$ is parallel to $x_{k_1}$ (or $x_{k_2}$ respectively), then $n \equiv 2 \mmod 4$ and in particular $n \geq 10$.
In this case one cylinder $c$ in direction $v_l$ contains the gluing along $x_{k_2}$ (or $x_{k_1}$ respectively). This cylinder has two preimage cylinders, both with inverse modulus $2 \lambda_n$ and cylinder number $\frac{n-2}{4}-1$ i.e. it has height $h = 2 \cos( \frac{n-8}{2} \cdot \frac{\pi}{n}) \sin(\frac{\pi}{n})$ (see equation \ref{eq:widthAndHeightEven} on page \pageref{eq:widthAndHeightEven}).
 The two cylinders in the horizontal decomposition of $Y_{n,4}$ with inverse modulus $2 \lambda_n$ are preimages of cylinder $\frac{n-2}{4}$, so they have height $\tilde{h} = 2 \cos( \frac{n-4}{2} \cdot \frac{\pi}{n}) \sin(\frac{\pi}{n})$. Since 
 $ \tilde{h} - h = 2 \sin(\frac{\pi}{n})(\cos( \frac{n-4}{2} \cdot \frac{\pi}{n}) - \cos( \frac{n-8}{2} \cdot \frac{\pi}{n})) = -4 \sin(\frac{n-6}{2} \cdot \frac{\pi}{n}) \sin^2\!(\frac{\pi}{n}) \neq 0$,
no affine map with derivative ${R_n}^{2l}$ exists on $Y_{n,4}$.

If $n \equiv 0 \mmod 4$ and $l = \frac{n}{4}$, then $v_l$ is vertical and the cylinder decomposition of $Y_{n,d}$ in direction $v_l$ has a cylinder with height $h = 2 \cos(\frac{\pi}{n}) \sin(\frac{\pi}{n})$ and inverse modulus $d \lambda_n$ or two cylinders with inverse modulus $\frac{d}{2} \lambda_n$, respectively (depending on $d$ being odd or even). 
The cylinder with inverse modulus $d \lambda_n$ or $\frac{d}{2} \lambda_n$ in the horizontal decomposition of $Y_{n,d}$ has height $\tilde{h} = 2 \cos( \frac{n-2}{2} \cdot \frac{\pi}{n}) \sin(\frac{\pi}{n})$ which is 
different from $h$. 
Hence we may conclude that no affine map with derivative ${R_n}^\frac{n}{2}$ exists on $Y_{n,d}$.
\end{proof}

We deduce the following Corollary on the Teichm\"uller curve to $Y_{n,d}$.

\begin{corollary}
\label{cor:CurveEven}
 $\HH / \Gamma_n$ has genus $0$ and $\frac{n+2}{2}$ cusps.
\end{corollary}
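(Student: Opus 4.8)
The plan is to follow the proof of Corollary~\ref{cor:Curve} step by step, now using the coset decomposition of $\Gamma_n$ established in the proof of Theorem~1~b). First I would fix a fundamental region $F$ of $\Gamma(X_n) = \langle T_n, {R_n}^2 \rangle$. Unlike in the odd case, this group is the triangle group of signature $(\tfrac{n}{2}, \infty, \infty)$, so $F$ carries \emph{two} inequivalent ideal vertices and a single elliptic vertex of order $\tfrac{n}{2}$. From the decomposition $\Gamma(X_n) = \bigcup_{i=0}^{n/2-1} \Gamma_n \, {R_n}^{2i}$ with representatives $I, {R_n}^2, \dots, {R_n}^{n-2}$, I would assemble the fundamental region of $\Gamma_n$ as $G = \bigcup_{i=0}^{n/2-1} {R_n}^{2i}(F)$, a union of $\tfrac{n}{2}$ copies of $F$ (see \cite{Kat92} Theorem~3.1.2).

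Next I would read off the side pairings of $G$ from the two families of parabolic generators in Theorem~1~b): the matrices ${R_n}^{2l}\,{T_n}^2\,{R_n}^{-2l}$ identify the edges accumulating at the images of the first ideal vertex of $F$, while the matrices ${R_n}^{2l}\,({T_n}^{-1}{R_n}^2)^2\,{R_n}^{-2l}$ pair the edges at the images of the second. Performing these identifications, I expect each of the two $\infty$-directions of $\Gamma(X_n)$ to yield one cusp of relative width $1$, with the rotations ${R_n}^{2i}$ producing a further $\tfrac{n-2}{2}$ cusps of relative width $2$, so that $\HH/\Gamma_n$ has $2 + \tfrac{n-2}{2} = \tfrac{n+2}{2}$ cusps. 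A crucial point is the fate of the elliptic fixed point of $F$: since $I, {R_n}^2, \dots, {R_n}^{n-2}$ are precisely the coset representatives, the order-$\tfrac{n}{2}$ stabiliser $\langle {R_n}^2\rangle$ meets $\Gamma_n$ trivially, so the $\tfrac{n}{2}$ copies ${R_n}^{2i}(F)$ close up into a full disc around that point. Hence the elliptic point of the base unwinds to an ordinary smooth point and $\Gamma_n$ has no elliptic elements in $\PSL$.

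With the cell structure of $G$ in hand, I would finish exactly as in Corollary~\ref{cor:Curve} by computing $\chi(\HH/\Gamma_n) = V - E + F$ for the compactified quotient; equivalently one multiplies the orbifold Euler characteristic $\chi^{\mathrm{orb}}(\HH/\Gamma(X_n)) = \tfrac{2-n}{n}$ by the index $\tfrac{n}{2}$ to get $\tfrac{2-n}{2}$. Since this agrees with the value $2 - (\text{number of cusps})$ for a genus-$0$ surface without cone points, both the cusp count $\tfrac{n+2}{2}$ and the conclusion $g(\HH/\Gamma_n) = 0$ follow.

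I expect the main obstacle to be the cusp bookkeeping of the second paragraph. Because $\Gamma(X_n)$ now has two inequivalent cusps instead of the single cusp of the Hecke group in the odd case, one must track both generator families at once and determine exactly which copies ${R_n}^{2i}(F)$ abut each parabolic fixed point, in order to justify the relative widths $1$ and $2$ and confirm the total $\tfrac{n+2}{2}$; once the vertices and their identifications are pinned down, the Euler-characteristic step is routine.
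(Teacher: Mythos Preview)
Your plan matches the paper's proof almost exactly: take a fundamental region $F$ for $\Gamma(X_n)=\langle T_n,R_n^{2}\rangle$ (signature $(\tfrac{n}{2},\infty,\infty)$, hence two ideal vertices), form $\widetilde F=\bigcup_{i=0}^{n/2-1}R_n^{2i}(F)$, read off the side pairings from the two parabolic families, count the cusps, and finish with an Euler-characteristic computation. The paper does precisely this, only it phrases the cusp count more bluntly: the ideal vertices of $\widetilde F$ are $\infty$ and $\cot(r\pi/n)$ for $r=1,\dots,n-1$ ($n$ vertices in all), the listed parabolics identify $\tfrac{n-2}{2}$ pairs of them, leaving $\tfrac{n+2}{2}$ cusps; genus $0$ then follows from $\chi$. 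Your orbifold Euler-characteristic cross-check is a nice addition the paper does not make.

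One point of caution about your expected width pattern. Your guess that each of the two base cusps contributes exactly one cusp of relative width $1$ is only correct when $n\equiv 2\pmod 4$. When $n\equiv 0\pmod 4$ the picture is different: the proof of Theorem~1~b) actually shows that $M=R_n^{n/2}T_nR_n^{-n/2}\in\Gamma_n$, so the cusp at $0=R_n^{n/2}(\infty)$ also has relative width $1$, and \emph{both} width-$1$ cusps then lie over $c_1=\infty$, while every cusp over $c_2$ has width $2$ (for $n=8$ one gets widths $1,1,2$ over $c_1$ and $2,2$ over $c_2$). You already flagged this bookkeeping as the delicate step; the paper simply avoids it by counting identified vertex pairs rather than widths, and that is the cleanest way to carry out your second paragraph.
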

\begin{proof}
\begin{figure}[htbp]
\centering
\subfigure[$\Gamma(X_n) = \langle T_n, {R_n}^2 \rangle$]{
\label{fig:FundX6}
\centering
 \includegraphics[width=0.3\textwidth]{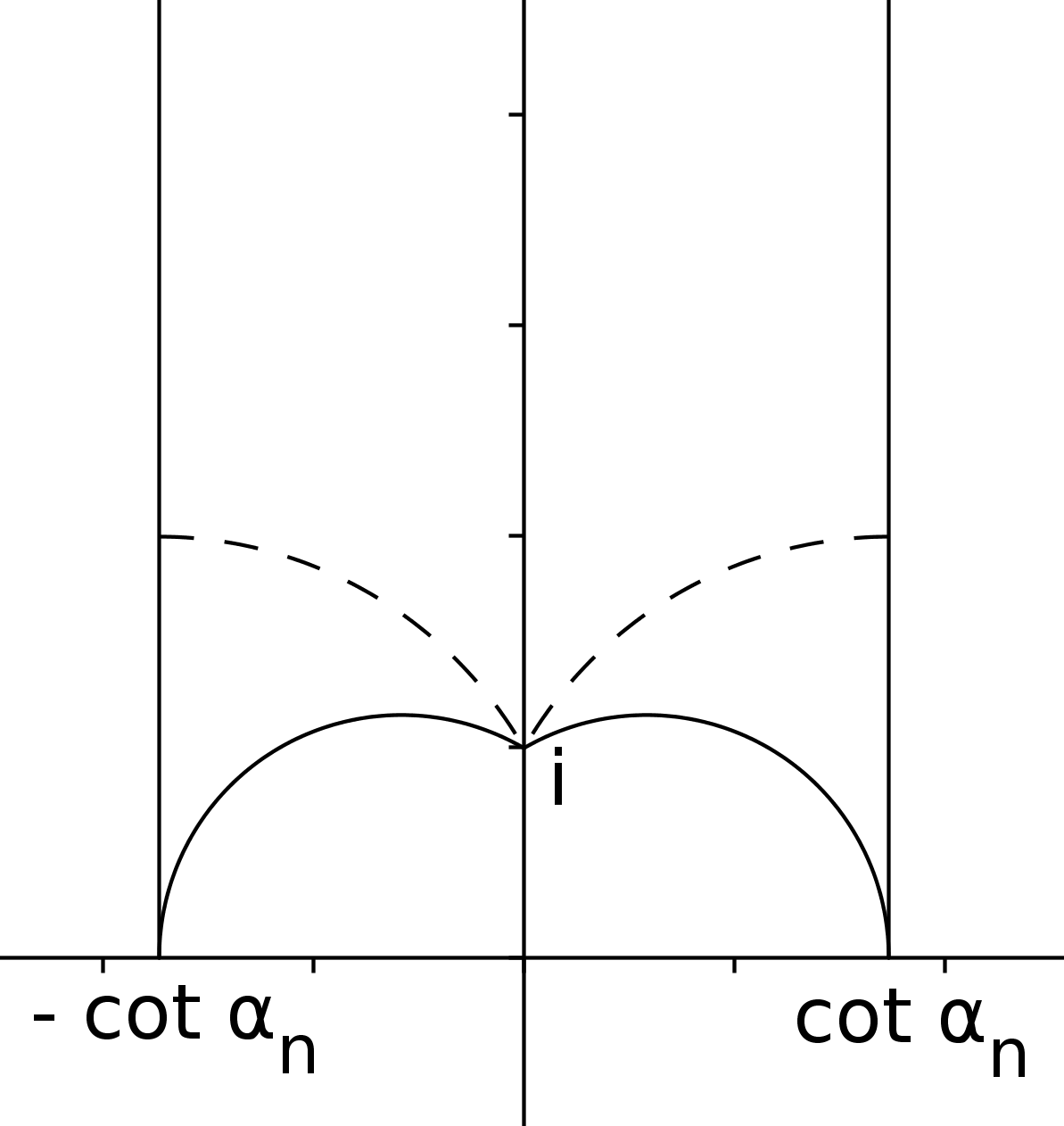}
}
\subfigure[$\Gamma_8$]{
\label{fig:FundY8}
\centering
 \includegraphics{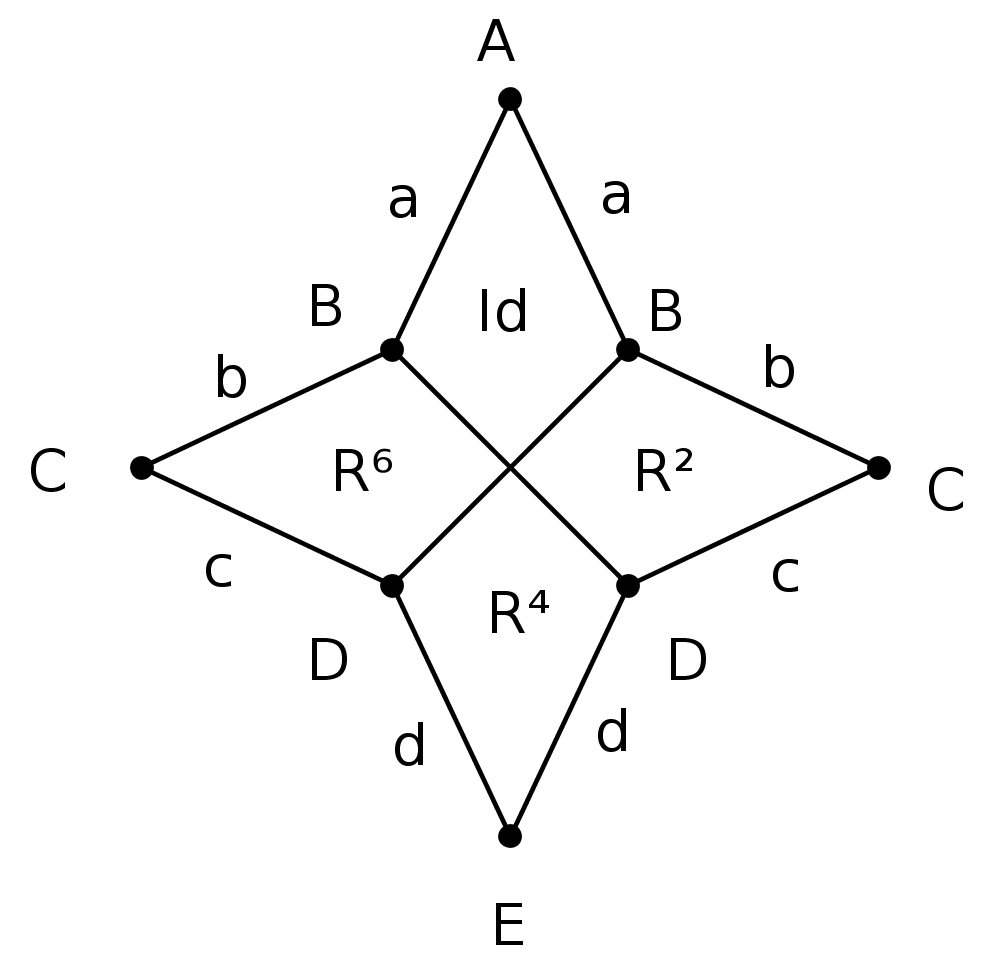}
}
 \caption{Fundamental regions}
 \label{fig:FundamentalbereichX6}
\end{figure}
 The Veech group $\Gamma(X_n) = \langle T_n, {R_n}^{2} \rangle$ has index two in the Hecke group $\langle T_n, R_n \rangle$. In particular it has a fundamental region $F$ as shown in
 Figure \ref{fig:FundX6} with $2$ cusps, $\{\infty\}$ and $\{- \cot \alpha_n, \cot \alpha_n\}$ with $\alpha_n = \frac{\pi}{n}$. A fundamental region of $\Gamma_n$ is $\widetilde{F} = F \cup {R_n}^2(F) \cup \dots \cup {R_n}^{n-2}(F)$. The vertices of $\widetilde{F}$ in $\R \cup \{\infty\}$ are $\{ \infty\}$ and $\cot(r \, \alpha_n)$ with $r \in \{ 1, \dots, n-1 \}$. The matrices ${R_n}^{2l} \, {T_n}^2 \, {R_n}^{-2l}$ and ${R_n}^{2l} \, ({T_n}^{-1}{R_n}^2)^2 \, {R_n}^{-2l}$ identify $\frac{n-2}{2}$ pairs of vertices (see Figure \ref{fig:FundY8} for $\Gamma_8$), so $\HH / \Gamma_n$ has $\frac{n+2}{2}$ cusps. Using the Euler characteristic, it can be seen that $\HH / \Gamma_n$ has genus $0$.
\end{proof}

\section{Infinite translation surfaces.}
\label{sec:inf}

In this section we study the limit of our covering families 
and show that their Veech groups are lattices in $\SL$. An \textit{infinite translation surface} $X^\infty$ can be obtained by gluing countably infinitely many polygons via identification of edge pairs using translations. We only consider the case where countably infinitely many copies of one polygon are glued together. 
Then, if infinitely many copies are glued at one vertex, an infinite angle singularity arises.
Let $\Sigma^\infty$ be the set of infinite angle singularities, then $X^\infty \setminus \Sigma^\infty$ is a Riemann surface and $X^\infty$ is the metric completion of $X^\infty \setminus \Sigma^\infty$. So we get a so called \textit{tame flat surface}.
For more details on infinite translation surfaces see e.g. \cite{Val09}.\\

As in the finite case, let $n \geq 5$, $n \neq 6$. To simplify notation, let $X^\ast = X \setminus \Sigma(X)$ for a finite or tame infinite translation surface $X$. Our series of translation coverings give rise to the infinite translation surface $Y_{n,\infty}$, defined by the monodromy
$$m_{n,\infty}: \left\{\begin{array}{rcl}
                 \pi_1(X_n^\ast) & \longrightarrow & \textrm{Sym}(\Z)\\
		 x_i & \mapsto & \id \quad \textrm{, for } i \notin \{k_1,k_2\}\\
		 x_{k_1} & \mapsto & \sigma_{k_1}\\
		 x_{k_2} & \mapsto & \sigma_{k_2}
                \end{array}\right.$$
where $\sigma_{k_1}: l \mapsto \left\{\begin{array}{rl}
				   l+1 & \textrm{, } l \textrm{ even}\\
				   l-1 & \textrm{, } l \textrm{ odd}
				  \end{array}\right.$,
$\sigma_{k_2}: l \mapsto \left\{\begin{array}{rl}
				   l-1 & \textrm{, } l \textrm{ even}\\
				   l+1 & \textrm{, } l \textrm{ odd}
				  \end{array}\right. \, ,$  
$$k_1 = \left\{\begin{array}{ll}
  \frac{n-1}{2} & \textrm{, } n \textrm{ odd}\\
  \frac{n}{4} -1 & \textrm{, } n \equiv 0 \mmod 4\\
  \frac{n-2}{4} -1 & \textrm{, } n \equiv 2 \mmod 4
  \end{array}\right. 
  \quad \textrm{ and } \quad 
  k_2 = \left\{\begin{array}{ll}
  \frac{n+1}{2} & \textrm{, } n \textrm{ odd}\\
  \frac{n}{4} & \textrm{, } n \equiv 0 \mmod 4\\
  \frac{n-2}{4} + 1 & \textrm{, } n \equiv 2 \mmod 4
  \end{array}\right. \, .$$

\begin{remark}
 The surface $Y_{n,\infty}$ has $4$ infinite angle singularities.
\end{remark}
\begin{proof}
 For odd $n$ and even $n$, $n \equiv 0 \mmod 4$, a simple clockwise path around the singularity in $X_n$ is given by 
$$\begin{array}{lc}
   p = x_0 {x_1}^{-1} x_2 {x_3}^{-1} x_4 \dots x_{n-3} {x_{n-2}}^{-1} {x_0}^{-1} x_1 {x_2}^{-1} x_3 \dots {x_{n-3}}^{-1} x_{n-2}
  &\textrm{or}\\
 p = x_0 {x_1}^{-1} x_2 {x_3}^{-1} x_4 \dots x_{\frac{n}{2}-2} {x_{\frac{n}{2}-1}}^{-1} {x_0}^{-1} x_1 {x_2}^{-1} x_3 \dots {x_{\frac{n}{2}-2}}^{-1} x_{\frac{n}{2}-1} & ,
  \end{array}$$
respectively.
Since the monodromy of $x_i$ is the identity iff $i \notin \{k_1, k_2\}$, $\sigma_{k_1} = {\sigma_{k_1}}^{-1}$ and $\sigma_{k_2} = {\sigma_{k_2}}^{-1}$, we have
$$ (m_{n,\infty} (p))\,(l) = (\sigma_{k_2} \circ \sigma_{k_1})^2 (l)= \left\{ \begin{array}{ll}
                               l+4 & , \,l \textrm{ even}\\
			       l-4 & , \,l \textrm{ odd}\\
                              \end{array}\right. \;.$$
For even $n$, $n \equiv 2 \mmod 4$, two simple clockwise paths $p_1$ and $p_2$ around the two singularities in $X_n$ are given by
$
p_1 = x_0 {x_1}^{-1} x_2 {x_3}^{-1} x_4 \dots {x_{\frac{n}{2}-2}}^{-1} x_{\frac{n}{2}-1}$ and
$p_2 = x_1 {x_2}^{-1} x_3 \dots x_{\frac{n}{2}-2} {x_{\frac{n}{2}-1}}^{-1} {x_0}^{-1} 
  \,.$
It follows
$$
  (m_{n,\infty} (p_1))\, (l)= 
     (m_{n,\infty} (p_2))\,(l) =
      \sigma_{k_2} ( \sigma_{k_1} (l))
      = \left\{ \begin{array}{ll}
                               l+2 & , \,l \textrm{ even}\\
			       l-2 & , \,l \textrm{ odd}\\
                              \end{array}\right.
 \;.$$
\end{proof}

\begin{remark}
 The covering $p_{n,\infty}: Y_{n,\infty} \to X_n$ is not a $\Z$-covering, but $Y_{n,\infty}$ is a $\Z$-covering of $Y_{n,2}$:
 A basis of $\pi_1(Y_{n,2}^\ast)$ is $B = \{ x_{k_2} {x_{k_1}}^{-1},{x_{k_1}}^2, x_{k_1} x_{k_2} \} \cup \{x_i, \, x_{k_1} x_i {x_{k_1}}^{-1} \mid i \notin\{k_1, k_2\} \, \}$ and $\pi_1(Y_{n,\infty}^\ast) \subseteq \pi_1(Y_{n,2}^\ast)$.
 Every copy of $Y_{n,2}$ in $Y_{n,\infty}$ consists of two copies of $X_n$ with numbers $2l$ and $2l+1$.
 Since $\sigma_{k_2} \sigma_{k_1} (2l) = 2l+2$ and ${\sigma_{k_1}}^{-1} \sigma_{k_2} = (\sigma_{k_2} \sigma_{k_1})^{-1}$, the image of $\pi_1(Y_{n,2}^\ast)$ under $m_{n,\infty}$ restricted to the even numbers defines a transitive permutation group on $2 \Z$ isomorphic to $\Z$. It follows that
 $$\tilde{m} = {m_{n,\infty}}\arrowvert_{\pi_1(Y_{n,2}^\ast)} = \left\{
 \begin{array}{rcl}
  \pi_1(Y_{n,2}^\ast) & \to & \textrm{Sym}(2 \Z) \cong \textrm{Sym}(\Z)\\
  x_{k_1} x_{k_2} & \mapsto & (2l \mapsto 2l+2) \\ 
  x_{k_2} {x_{k_1}}^{-1} & \mapsto & (2l \mapsto 2l-2)\\ 
  w & \mapsto & \id \quad \quad ,\, w \in B \setminus\{x_{k_1} x_{k_2}, x_{k_2} {x_{k_1}}^{-1}\}
 \end{array}
 \right.$$
defines a normal covering $\tilde{p}: Y_{n,\infty} \to Y_{n,2}$ with Galois group $\Z$ and $p_{n,\infty} = p_{n,2} \circ \tilde{p}$. 

In \cite{HoWe09} $\Z$-covers are defined by non-zero elements in the relative homology $H_1(X, \Sigma(X); \Z)$. In our case, a representative for the element $w \in H_1(Y_{n,2}, \Sigma(Y_{n,2}); \Z)$, defining $Y_{n,\infty}$, consists of two reverse paths on the edges $x_{k_2}$ in copy $0$ and copy $1$. For $n=8$ this can be seen in Figure \ref{fig:relHom}. 
\begin{figure}[htb]
 \centering
 \includegraphics{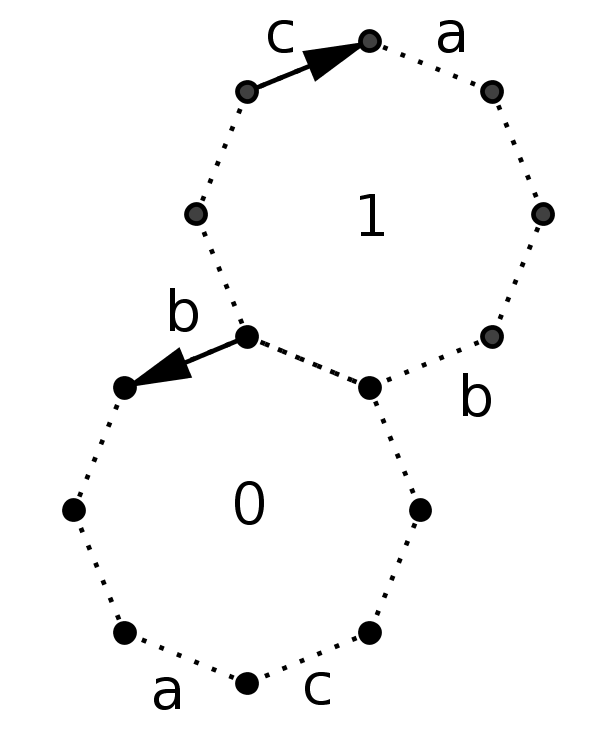}
 \caption{$w \in H_1(Y_{8,2}, \Sigma(Y_{8,2}); \Z)$, defining $Y_{8,\infty}$}
 \label{fig:relHom}
\end{figure}
The holonomy $hol(w) = 0$, so according to \cite{HoWe09} the cover is recurrent, i.e. the straight line flow on $Y_{n,\infty}$ is recurrent for almost every direction $\theta$.
\end{remark}

We want to determine the Veech group of $Y_{n,\infty}$ and start by arguing, why $\Gamma(Y_{n, d})$ is contained in $\Gamma(X_n)$, even for $d = \infty$. 
For this we use the developments of the saddle connections of a translation surface $(X,\omega)$ (see \cite{Vor96} Chapter 3.1). For every saddle connection there is a chart containing the saddle connection without end points. Thus the saddle connection defines an interval in $\R^2$, i.e.\ a vector and its additive inverse in $\R^2$, not depending on the chosen chart. The set of all theses vectors 
is called $SC(X)$. 

\begin{lemma}
\label{Lemma:inf}
 For $n \geq 8$, the Veech group $\Gamma(Y_{n, d})$ of $Y_{n,d}$ is contained in $\Gamma(X_n)$, even for $d = \infty$. 
\end{lemma}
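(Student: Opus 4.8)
The plan is to show, uniformly for finite and infinite $d$, that the derivative $A$ of every affine automorphism of $Y_{n,d}$ lies in $\Gamma(X_n)$, using the set of development vectors $SC(\,\cdot\,)$ of saddle connections as the bridge between covering and base surface. This argument avoids M\"oller's theorem entirely, so it simultaneously reproves Lemma~\ref{Lemma:prim} for the surfaces $X_n$ and extends it to $d=\infty$. The first ingredient is immediate: an affine automorphism $f$ of $Y_{n,d}$ with derivative $A$ preserves the singular set $\Sigma(Y_{n,d})$ (the infinite-angle singularities when $d=\infty$) and sends geodesics to geodesics, so it permutes the saddle connections of $Y_{n,d}$ and scales their developments by $A$. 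Hence $A \cdot SC(Y_{n,d}) = SC(Y_{n,d})$.

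The heart of the proof is the equality $SC(Y_{n,d}) = SC(X_n)$ in $\R^2$. Because $p_{n,d}$ is locally a translation and $p_{n,d}^{-1}(\Sigma(X_n)) = \Sigma(Y_{n,d})$, the image of a saddle connection of $Y_{n,d}$ is a geodesic of $X_n$ with singular endpoints whose interior meets no singularity (an interior point mapping into $\Sigma(X_n)$ would itself lie in $\Sigma(Y_{n,d})$); it is therefore a saddle connection with the same development, giving $SC(Y_{n,d}) \subseteq SC(X_n)$. Conversely, the interior of a saddle connection of $X_n$ lies in $X_n^\ast$ and hence lifts through the covering to a geodesic of the same (finite) length, whose two limit endpoints lie in $p_{n,d}^{-1}(\Sigma(X_n)) = \Sigma(Y_{n,d})$; this lift is a saddle connection of $Y_{n,d}$, giving the reverse inclusion. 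In particular $A$ stabilises the discrete set $SC(X_n)$.

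It remains to identify the stabiliser $G := \{\, B \in \SL : B\cdot SC(X_n) = SC(X_n)\,\}$ with $\Gamma(X_n)$; this yields $A \in \Gamma(X_n)$ and proves the lemma. Since $SC(X_n)$ is a closed discrete subset of $\R^2$ spanning $\R^2$, its stabiliser $G$ is a discrete subgroup of $\SL$: a sequence in $G$ converging to the identity must eventually fix two independent elements of $SC(X_n)$, hence eventually equal $I$. As $G$ contains the lattice $\Gamma(X_n)$ (affine automorphisms of $X_n$ permute its saddle connections), $G$ is itself a lattice and $\Gamma(X_n)$ has finite index in it. For odd $n$, $\Gamma(X_n)$ is the Hecke triangle group of signature $(2,n,\infty)$, which is a maximal Fuchsian group, so $G = \Gamma(X_n)$. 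For even $n$, where $\Gamma(X_n)$ is the index-two subgroup of signature $(\tfrac{n}{2},\infty,\infty)$, the only candidate overgroup inside the ambient Hecke group $\langle T_n, R_n\rangle$ is that Hecke group itself; I would exclude it by checking that the half-rotation $R_n$ moves the directions of the shortest saddle connections to directions in which the shortest saddle connection is strictly longer, so $R_n \notin G$. Because $\Gamma(X_n)$ is non-arithmetic for $n\geq 8$, its commensurator is exactly this maximal Hecke group, so no lattice overgroup lies outside it either; thus again $G=\Gamma(X_n)$.

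The main obstacle is this last step. The equality $SC(Y_{n,d}) = SC(X_n)$ follows formally from the covering axioms and is robust under passing to $d=\infty$, but deducing $A\in\Gamma(X_n)$ rather than merely $A\in G$ requires pinning the stabiliser down exactly: one must combine discreteness of $SC(X_n)$ with the maximality of the Hecke group (via the commensurator of the non-arithmetic lattice $\Gamma(X_n)$) and, in the even case, the explicit systole computation showing that $R_n$ fails to preserve $SC(X_n)$.
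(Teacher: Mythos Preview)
Your proof is correct and follows essentially the same route as the paper: both establish $SC(Y_{n,d}) = SC(X_n)$ from the covering being a local translation with $p_{n,d}^{-1}(\Sigma(X_n))=\Sigma(Y_{n,d})$, deduce that $\Gamma(Y_{n,d})$ lies in the discrete stabiliser of $SC(X_n)$, and then pin this stabiliser down to $\Gamma(X_n)$ via maximality of the Hecke group in the odd case and the explicit check that $R_n$ fails to preserve the shortest saddle-connection vectors in the even case. The only difference is cosmetic: where you bound discrete overgroups of $\Gamma(X_n)$ by the Hecke group using the Margulis commensurator theorem for non-arithmetic lattices, the paper cites Singerman's classification of inclusions between triangle groups to reach the same conclusion.
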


\begin{proof}
The covering map $p_{n,d}: Y_{n,d} \to X_n$ is locally a translation, so every saddle connection in $Y_{n, d}$ is mapped one to one to a saddle connection in $X_n$ and every saddle connection in $X_n$ can be lifted to a saddle connection in $Y_{n,d}$. It immediately follows $SC(X_n) = SC(Y_{n,d})$. This idea, at least for finite $d$, and the following statement can be found in \cite{Vor96}.
Affine maps preserve the set of saddle connections. We conclude that $\Gamma(Y_{n, d})$ is contained in the stabiliser $\textrm{Stab}(SC(Y_{n,d}))$ of $SC(Y_{n,d})$ and $\Gamma(X_n) \subseteq \textrm{Stab}(SC(X_n))$, respectively.
 According to \cite{Vor96} Proposition 3.1, $SC(X_n)$ has no limit points, hence $\textrm{Stab}(SC(X_n))$ is a discrete group. 
 
 For odd $n$, the Veech group $\Gamma(X_n)$ is a Hecke group. In particular it is a maximal discrete subgroup of $\SL$ and $\Gamma(X_n) \subset \textrm{Stab}(SC(X_n))$ implies $\Gamma(X_n) = \textrm{Stab}(SC(X_n))$ which proves the statement for $n$ odd. 
 
For even $n$, $\Gamma(X_n)$ is a triangle group with signature $(\frac{n}{2}, \infty, \infty)$. According to \cite{Si72} Theorem $1$ and Theorem $2$, the only possible discrete groups properly containing $\Gamma(X_n)$ are the triangle groups with signature $(n, 2, \infty)$.
Let $G$ denote the triangle group $(n, 2, \infty)$, containing $\Gamma(X_n)$. Then $G = \langle T_n, R_n \rangle$ and $[G:\Gamma(X_n)] = 2$. The shortest saddle connections in $X_n$ are the edges of the regular $n$-gon. 
Their corresponding elements in $SC(X_n)$ are 
$$\begin{array}{rcl}
   v_l &=& {R_n}^{2(l+1)} \cdot \begin{pmatrix} 
                1 \\ 0
               \end{pmatrix}
      - {R_n}^{2l} \cdot \begin{pmatrix}
                1 \\ 0
               \end{pmatrix}
= \begin{pmatrix}
\cos (2(l+1) \frac{\pi}{n}) - \cos (2l \frac{\pi}{n})\\
\sin (2(l+1) \frac{\pi}{n})- \sin (2l \frac{\pi}{n})
\end{pmatrix} \\
&=& \begin{pmatrix}
   - 2 \sin ((2l+1) \frac{\pi}{n} ) \sin(\frac{\pi}{n})\\
   2 \cos((2l+1) \frac{\pi}{n}) \sin(\frac{\pi}{n})
  \end{pmatrix} \,.
  \end{array} $$
The rotation $R_n$ doesn't change the length of a saddle connection, but 
$$R_n \cdot v_l = {R_n}^{2(l+1) + 1} \cdot \begin{pmatrix} 
                1 \\ 0
               \end{pmatrix}
      - {R_n}^{2l +1 } \cdot \begin{pmatrix}
                1 \\ 0
               \end{pmatrix}
= \begin{pmatrix}
   - 2 \sin ((2l+2) \frac{\pi}{n} ) \sin(\frac{\pi}{n})\\
   2 \cos((2l+2) \frac{\pi}{n}) \sin(\frac{\pi}{n})
  \end{pmatrix} \neq v_j
$$
for all $j \in \{0, \dots, n-1\}$. So $R_n \notin \textrm{Stab}(SC(X_n))$ and $\textrm{Stab}(SC(X_n)) = \Gamma(X_n)$. 
 We conclude $\Gamma(Y_{n,d}) \subset \textrm{Stab}(SC(Y_{n,d})) = \textrm{Stab}(SC(X_n)) = \Gamma(X_n)$.
\end{proof}

Now we can calculate the Veech group of the infinite covering surface by reproducing the arguments of the finite case.

\begin{ThmInf}
\label{prop:VeechGroupInf}
 For $n \geq 5$, $n \neq 6$, the Veech group $\Gamma(Y_{n, \infty})$ of $Y_{n,\infty}$ is $\Gamma_n$. In particular $Y_{n,\infty}$ is an infinite translation surface with a lattice Veech group.
\end{ThmInf}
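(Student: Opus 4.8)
The plan is to prove the two inclusions $\Gamma_n \subseteq \Gamma(Y_{n,\infty})$ and $\Gamma(Y_{n,\infty}) \subseteq \Gamma_n$ separately, reusing the four-step machinery of the finite case almost verbatim. The observation that makes this transfer possible is that the images $\sigma_{k_1}, \sigma_{k_2}$ of $x_{k_1}, x_{k_2}$ under $m_{n,\infty}$ are again involutions all of whose nontrivial cycles have length two. Lemma~\ref{Lemma:inf} already supplies $\Gamma(Y_{n,\infty}) \subseteq \Gamma(X_n)$, and Lemma~\ref{Lemma:VeechGroupGen} together with its even analogue records that $\Gamma_n$ has finite index in $\Gamma(X_n)$ with the explicit coset representatives $I, R_n, \dots, {R_n}^{n-1}$ (resp.\ $I, {R_n}^2, \dots, {R_n}^{n-2}$). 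So everything reduces to producing the affine maps generating $\Gamma_n$ on $Y_{n,\infty}$ and to separating these finitely many cosets.

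For the inclusion $\Gamma_n \subseteq \Gamma(Y_{n,\infty})$ I would simply rerun Steps 1, 3 and 4. Step 1 is unchanged: in each direction $v_l$ with $1 \le l \le k_1$ the surface $X_n$ decomposes exactly as before, and since $\sigma_{k_1}$ and $\sigma_{k_2}$ consist only of transpositions, every cylinder of $X_n$ meeting $x_{k_1}$ or $x_{k_2}$ lifts to (infinitely many) cylinders of $Y_{n,\infty}$ of inverse modulus $2\lambda_n$, while all remaining lifts keep modulus $\lambda_n$; the cylinder twist then realizes ${R_n}^l \, {T_n}^2 \, {R_n}^{-l}$. Step 4 is likewise immediate, since the fibrewise rotation by $\pi$ about the centre of $x_{n-1}$ is compatible with the gluings precisely because $\sigma_{k_1}, \sigma_{k_2}$ are involutions. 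For Step 3 I would exhibit an explicit permutation $\sigma_T \in \mathrm{Sym}(\Z)$ realizing $T_n$: here $\suc(i) = m_{n,\infty}(x_{k_1} {x_{k_2}}^{-1})(i)$ equals $i+2$ for even $i$ and $i-2$ for odd $i$, and the permutation fixing every even copy and sending each odd $i$ to $i+2$ satisfies $\sigma_T(\suc(i)) = \suc(\sigma_T(i))$ and $\sigma_{k_1}(\sigma_T(i)) = \sigma_T(\sigma_{k_2}(i))$, the two compatibility conditions of Step 3, so its cylinder twist patches into a globally defined affine map with derivative $T_n$.

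For the reverse inclusion I would adapt Step 2, where the infinite setting actually streamlines the argument. By Lemma~\ref{Lemma:inf} it suffices to show that the nontrivial coset representatives ${R_n}^l$ do not lie in $\Gamma(Y_{n,\infty})$. In the horizontal decomposition the outermost (odd case) resp.\ innermost (even case) cylinder contains gluings along both $x_{k_1}$ and $x_{k_2}$, so its core curve realizes $x_{k_1} {x_{k_2}}^{-1}$ with monodromy $\suc$; as $\suc$ has only infinite orbits, this cylinder lifts to cylinders of infinite length. For odd $n$ no non-horizontal direction $v_l$ contains a cylinder meeting both $x_{k_1}$ and $x_{k_2}$, so every cylinder in direction $v_l$ glues along $\sigma_{k_1}$ or $\sigma_{k_2}$ alone and is finite; an affine map with derivative ${R_n}^l$ would carry the horizontal decomposition bijectively onto the $v_l$ decomposition while preserving the distinction between finite and infinite cylinders, which is impossible. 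For even $n$ the same reasoning disposes of every coset representative except the vertical rotation ${R_n}^{n/2}$, which is a representative only when $n \equiv 0 \mmod 4$; there the vertical direction also carries an infinite cylinder, and I would instead compare the heights of the two infinite cylinders, namely $2\cos(\tfrac{n-2}{2}\cdot\tfrac{\pi}{n})\sin\tfrac{\pi}{n}$ horizontally against $2\cos\tfrac{\pi}{n}\sin\tfrac{\pi}{n}$ vertically, which differ exactly as in the finite computation, so no rotation can match them. Hence all nontrivial cosets stay distinct and $\Gamma(Y_{n,\infty}) \subseteq \Gamma_n$.

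The main obstacle I anticipate is not any single computation but the care needed to make these constructions legitimate on the non-compact tame flat surface $Y_{n,\infty}$: one must check that the copy-wise prescriptions of Steps 1, 3 and 4 assemble into genuine affine homeomorphisms respecting the metric completion and the identifications at the infinite-angle singularities, and, for Step 2, that an affine self-map really sends maximal cylinders to maximal cylinders and cannot turn an infinite cylinder into a finite one. Because all the gluing data is encoded by the two involutions $\sigma_{k_1}, \sigma_{k_2}$ and the single infinite permutation $\sigma_T$, these verifications are of the same algebraic nature as in the finite case and present no essential new difficulty once the finite-versus-infinite cylinder dichotomy is in hand. The lattice conclusion is then automatic, since $\Gamma_n$ is a finite-index subgroup of the Hecke group $\Gamma(X_n)$.
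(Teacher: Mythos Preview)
Your proposal is correct and follows essentially the same approach as the paper's own proof: both rerun Steps~1--4 of the finite case, use the explicit permutation $\sigma_T$ that fixes even copies and shifts odd copies by~$2$ to realise $T_n$, and replace the modulus-matching in Step~2 by the cleaner finite-versus-infinite cylinder dichotomy (falling back on the height comparison only for the vertical direction when $n\equiv 0\bmod 4$). The paper likewise invokes Lemma~\ref{Lemma:inf} and Lemma~\ref{Lemma:VeechGroupGen} to reduce to separating the finitely many rotation cosets, so there is no substantive difference in strategy.
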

\begin{proof}

We reconsider the steps 1 to 4 of the proof in the finite case. In step~1 we can use identical arguments to show that ${R_n}^l \, {T_n}^2 \, {R_n}^{-l}$ (if $n$ is odd) 
or that ${R_n}^{2l} \, {T_n}^2 \, {R_n}^{-2l}$ 
and ${R_n}^{2l} \,({T_n}^{-1}{R_n}^2)^2 \, {R_n}^{-2l}$ 
(if $n$ is even) 
are contained in $\Gamma(Y_{n,\infty})$. Once again the proof for ${R_n}^\frac{n}{2} \, {T_n}^2 \, {R_n}^{-\frac{n}{2}}$ if $n \equiv 0 \mmod 4$ and for ${R_n}^{\frac{n+2}{2}} \,({T_n}^{-1}{R_n}^2)^2 \, {R_n}^{-\frac{n+2}{2}}$ if $n \equiv 2 \mmod 4$  rely on $\sigma_T$ (as defined below).
 
 To prove that $I, R_n, \dots, {R_n}^{n-1}$ (or $I, {R_n}^2, \dots, {R_n}^{n-2}$ respectively) lie in different cosets of $\Gamma(Y_{n,\infty}) \bs \Gamma(X_n)$ in step 2, we compute the monodromy of the core curve $p$ of cylinder 
 $$k = \left\{\begin{array}{ll}
	 \frac{n-1}{2} &, \, n \textrm{ odd}\\
	 \frac{n}{4} &, \, n \equiv 0 \mmod 4\\
	 \frac{n-2}{4} &, \, n \equiv 2 \mmod 4
      \end{array}\right.$$ 
of the horizontal cylinder decomposition of $X_n$:
 $$m_{n,\infty}(p) = m_{n,\infty} (x_{k_1} {x_{k_2}}^{-1}) = {\sigma_{k_2}}^{-1} \circ \sigma_{k_1}
 = \left\{ \begin{array}{ll}
	    i \mapsto i+2 & ,\, i \textrm{ even}\\
	    i \mapsto i-2 & , \, i \textrm{ odd}
            \end{array} \right.$$
So cylinder $k$ has two preimages of infinite length in $Y_{n,\infty}$. Recall that no cylinder of $X_n$ in the decomposition in direction $v_l$ 
contains both $x_{k_1}$ and $x_{k_2}$ if $n$ is odd or if $n$ is even and $l \neq \frac{n}{4}$. This implies that $Y_{n,\infty}$ contains no infinite cylinder (i.e. no strips) in these directions. 
As in the finite case $n \equiv 0 \mmod 4$ and $l = \frac{n}{4}$ is an exception. There the cylinder decomposition in direction $v_l$ contains two infinite cylinders. But their height differs from the height of the infinite cylinders in the horizontal decomposition.
We conclude that there is no affine map with derivative ${R_n}^l$ (or ${R_n}^{2l}$) on $Y_{n, \infty}$.

The proof of $T_n \in \Gamma(Y_{n, \infty})$ is similar to the finite case when $d$ is even. We consider the two infinite length preimages of cylinder $k$ of the horizontal cylinder decomposition. The maps $\suc$ and $\pred$ are defined as in the finite case and again we need to prove the existence of a permutation $\sigma_T \in \textrm{Sym}(\Z)$ fulfilling the two properties:
 $\sigma_T(\suc(i)) = \suc(\sigma_T(i))$ and
$m_{n,\infty}(x_{k_1}) (\sigma_T(i)) = \sigma_T(m_{n,\infty}(x_{k_2})(i)) \,. $
Let 
$$\sigma_T = \left\{
\begin{array}{ll} 
i \mapsto i & , \, i \textrm{ even}\\
i \mapsto i+2 & , \, i \textrm{ odd}
\end{array} \right. \,.$$
The map $\suc = m_{n,\infty} (x_{k_1} {x_{k_2}}^{-1})$ was calculated above. It remains to check the two conditions on $\sigma_T$:
$$\sigma_T(\suc(i)) =  
\left\{ \begin{array}{ll}
         i+2 & , \, i \textrm{ even}\\
	 i & , \, i \textrm{ odd}
        \end{array} \right.
= \suc(\sigma_T(i))$$
$$m_{n,\infty}(x_{k_1}) (\sigma_T(i))
= i+1 
= \sigma_T(m_{n,\infty}(x_{k_2})(i))$$
Finally the proof for $-I \in \Gamma(Y_{n,\infty})$ in step 4 works as in the finite case. Lemma~\ref{Lemma:inf} and Lemma~\ref{Lemma:VeechGroupGen} complete the proof.
\end{proof}

\bibliographystyle{amsalpha} 
\bibliography{paperMathe.bib}

\providecommand{\bysame}{\leavevmode\hbox to3em{\hrulefill}\thinspace}
\providecommand{\MR}{\relax\ifhmode\unskip\space\fi MR }
\providecommand{\MRhref}[2]{%
  \href{http://www.ams.org/mathscinet-getitem?mr=#1}{#2}
}
\providecommand{\href}[2]{#2}
\begin{thebibliography}{McM06}

\bibitem[Fre08]{Fr08}
Myriam Freidinger, \emph{{Stabilisatorgruppen in Aut($F_z$) und Veechgruppen
  von \"Uberlagerungen}}, diploma thesis, Universit\"at Karlsruhe, 2008.

\bibitem[GJ00]{GJ00}
E.~Gutkin and C.~Judge, \emph{{Affine mappings of translation surfaces:
  Geometry and arithmetic}}, Duke Math. J. \textbf{103} (2000), no.~2,
  191--213.

\bibitem[Hoo08]{Ho08}
P.~Hooper, \emph{{Dynamics on an infinite surface with the lattice property}},
  2008, arXiv:0802.0189v1.

\bibitem[HS01]{HS01}
P.~Hubert and T.A. Schmidt, \emph{{Invariants of translation surfaces}},
  Annales de l'institut Fourier \textbf{51} (2001), no.~2, 461--495.

\bibitem[HS07]{HSch07}
F.~Herrlich and G.~Schmith\"usen, \emph{{On the boundary of Teichm\"uller disks
  in Teichm\"uller and in Schottky space}}, Handbook of Teichm\"uller Theory
  \textbf{I} (2007), 293--349.

\bibitem[HS10]{SchHu09}
P.~Hubert and G.~Schmith\"usen, \emph{{Infinite translation surfaces with
  infinitely generated Veech groups}}, J. Mod. Dyn. \textbf{4} (2010), no.~4,
  715--732.

\bibitem[HW09]{HoWe09}
P.~Hooper and B.~Weiss, \emph{{Generalized staircases: recurrence and
  symmetry}}, 2009, arXiv:0905.3736v1.

\bibitem[Joh97]{Joh97}
D.L. Johnson, \emph{Presentations of groups}, second ed., Cambridge University
  Press, 1997.

\bibitem[Kat92]{Kat92}
S.~Katok, \emph{{Fuchsian Groups}}, University of Chicago Press, 1992.

\bibitem[M\"06]{Moe06}
M.~M\"oller, \emph{{Periodic points on Veech surfaces and the Mordell-Weil
  group over a Teichm\"uller curve}}, Invent. Math. \textbf{165} (2006), no.~3,
  633--649.

\bibitem[McM06]{McM06}
C.T. McMullen, \emph{{Prym varieties and Teichm\"uller curves}}, Duke Math. J.
  \textbf{133} (2006), no.~3, 569--590.

\bibitem[Mir95]{Mir95}
R.~Miranda, \emph{{Algebraic Curves and Riemann Surfaces}}, American
  Mathematical Society, 1995.

\bibitem[Sch04]{Sch04}
G.~Schmith\"usen, \emph{{An algorithm for finding the Veech group of an
  origami}}, Experimental Mathematics \textbf{13} (2004), no.~4, 459--472.

\bibitem[Sch06]{Sch06}
\bysame, \emph{{Examples for Veech groups of origamis}}, Proceedings of the III
  Iberoamerican Congress on Geometry. In: The Geometry of Riemann Surfaces and
  Abelian Varieties. Contemp. Math. \textbf{397} (2006), 193--206.

\bibitem[Sin72]{Si72}
D.~Singerman, \emph{{Finitely maximal Fuchsian groups}}, J. London Mathematical
  Society \textbf{s2-6} (1972), 29--38.

\bibitem[Val09]{Val09}
F.~Valdez, \emph{{Veech groups, irrational billiards and stable abelian
  differentials}}, 2009, arXiv:0905.1591v2.

\bibitem[Vee89]{Vee89}
W.A. Veech, \emph{{Teichm\"uller curves in moduli space, Eisenstein series and
  an application to triangular billiards}}, Invent. math. \textbf{97} (1989),
  553--583.

\bibitem[Vor96]{Vor96}
Y.B. Vorobets, \emph{{Planar structures and billiards in rational polygons: the
  Veech alternative}}, Russ. Math. Surv. \textbf{51} (1996), 779--817.

\bibitem[ZK75]{ZK75}
A.N. Zemlyakov and A.B. Katok, \emph{Topological transitivity of billiards in
  polygons}, Math. Notes \textbf{18} (1975), 760--764.

\end{thebibliography}

\end{document}